\documentclass[a4paper,reqno]{amsart}
\usepackage[utf8]{inputenc}
\usepackage{amssymb}
\usepackage{amsmath}
\usepackage{mathtools}
\usepackage{ifpdf}
\usepackage{vmargin}
\usepackage{microtype}

\usepackage{setspace}

\ifpdf 
\usepackage[pdftex]{hyperref}
\else
\usepackage[hypertex]{hyperref}
\fi

\hypersetup{
	pdftitle={Low-regularity global well-posedness for the defocusing cubic wave equation outside a ball with radial initial data},
	pdfauthor={Ze Lin},
}

\theoremstyle{plain}
\newtheorem{theorem}{Theorem}[section]
\newtheorem{proposition}[theorem]{Proposition}
\newtheorem{lemma}[theorem]{Lemma}
\newtheorem{corollary}[theorem]{Corollary}

\theoremstyle{definition}
\newtheorem{definition}[theorem]{Definition}

\theoremstyle{remark}
\newtheorem{remark}[theorem]{Remark}

\numberwithin{equation}{section}

\newcommand{\R}{\mathbb{R}}

\newcommand{\Z}{\mathbb Z}

\newcommand{\bee}{\begin{eqnarray*}}
	\newcommand{\eee}{\end{eqnarray*}}

\allowdisplaybreaks[1]
\begin{document}
	\setstretch{1.12}
	
	\title[Low-regularity global well-posedness outside a ball]
	{Low-regularity global well-posedness\\
		for the defocusing cubic wave equation\\
		outside a ball with radial initial data}
	
	\date\today
	
	\author[Ze Lin]{Ze Lin}
	\address{\hskip-1.15em Ze Lin
		\hfill\newline Laboratory of Mathematics and Complex Systems,
		\hfill\newline Ministry of Education,
		\hfill\newline School of Mathematical Sciences,
		\hfill\newline Beijing Normal University,
		\hfill\newline Beijing, 100875, People's Republic of China.}
	\email{202421130044@mail.bnu.edu.cn}
	
	\keywords{Defocusing cubic wave equation;
		Exterior Dirichlet problem;
		Radial initial data;
		Low-regularity global well-posedness;
		Distorted Fourier transform;
		Logarithmic energy estimate}
	
	\begin{abstract}
		We study the three-dimensional defocusing cubic nonlinear wave equation
		with homogeneous Dirichlet boundary condition in the exterior of the unit
		ball, $\Omega=\R^3\backslash\overline{B(0,1)}$, for radial initial data.
		For every $\frac12\leq s\leq\frac34$, we establish global well-posedness
		in $\dot H^s_{D,rad}(\Omega)\times\dot H^{s-1}_{D,rad}(\Omega)$, including
		the critical endpoint $s=\frac12$, without an additional $L^4$ assumption
		on the initial displacement.
		The proof uses the distorted Fourier transform associated with the
		Dirichlet Laplacian to establish radial Sobolev embeddings, including
		the quantitative estimate
		$\|f\|_{L^p(\Omega)}\leq C\sqrt p\,\|f\|_{\dot H_D^{1/2}(\Omega)}$
		for $4\leq p<\infty$, with $C$ independent of $p$.
		We combine a nonlinear Fourier truncation argument with the critical
		Strichartz estimate to construct a global nonlinear component with
		high-frequency-truncated initial data.
		For the remainder, we construct a local energy solution in an affine
		energy space and derive a logarithmic energy inequality that prevents
		finite-time energy blow-up.
		Persistence of the original Sobolev regularity, uniqueness, and local
		Lipschitz dependence on each finite time interval complete the proof.
	\end{abstract}
	
	\maketitle

	\section{Introduction}
	
	We consider the three-dimensional defocusing cubic wave equation with
	homogeneous Dirichlet boundary condition outside the unit ball:
	\begin{equation}\label{eq:nlu}
		\begin{cases}
			\partial_t^2u-\Delta u+u^3=0,&(t,x)\in I\times\Omega,\\
			u(0,x)=u_0(x),\quad \partial_tu(0,x)=u_1(x),&x\in\Omega,\\
			
			u(t,x)=0,&x\in\partial\Omega,
		\end{cases}
	\end{equation}
	where $\Omega=\R^3\backslash\overline{B(0,1)}$, $I$ is an interval
	containing zero, and the initial data are real-valued and radial.
	We denote by $-\Delta_\Omega$ the Dirichlet Laplacian and use the associated
	homogeneous Sobolev spaces $\dot H_D^s(\Omega)$, recalled in
	Section~\ref{sect:DF transf}. Their radial subspaces are denoted by
	$\dot H^s_{D,rad}(\Omega)$.
	For sufficiently regular finite-energy solutions, the conserved energy is
	\begin{align}\label{def:energy}
		E(u)(t)
		&=\int_\Omega\left(\frac12|\partial_tu(t,x)|^2
		+\frac12|\nabla u(t,x)|^2+\frac14|u(t,x)|^4\right)\,dx
		\nonumber\\
		&=E(u)(0).
	\end{align}
	The corresponding whole-space equation is conformally invariant, and its
	scaling singles out the Sobolev index $s=\frac12$.
	Although scaling does not preserve the fixed exterior domain, we use the
	term critical regularity for this index, as in the whole-space problem.
	
	The low-regularity theory for nonlinear wave equations has been developed
	through Strichartz estimates, energy methods, and frequency decompositions;
	see, for example,
	\cite{BahChe:NLW,GallPlan03:NLW,KenigPV00:NLW,LindSog95:NLW,
		Roy:NLW:radial,Roy:NLW,Sogge95:book}.
	For the cubic equation on $\R^3$, Dodson proved global well-posedness and
	scattering for radial data in
	$\dot H^{1/2}(\R^3)\times\dot H^{-1/2}(\R^3)$
	\cite{Dod:NLW:sct1}.
	Related critical results for radial conformal wave equations in higher
	dimensions were obtained in \cite{Dod:conformal sharp}.
	These results concern the whole-space Cauchy problem; the presence of a
	boundary requires a linear theory and function spaces adapted to the
	Dirichlet Laplacian.
	
	For wave and Schr\"odinger equations on domains, dispersive estimates and
	Strichartz estimates depend on the geometry and the boundary condition.
	We refer to
	\cite{Burq03: NLW CPDE,BurqLP:NLW,BurqP:def NLW: Neum prob,
		DuyLaf:NLW:Neum prob,DuyYang24:LW Str,SmithSogge95:NLW JAMS,
		SmithSogge00:NLW CPDE}
	for representative results.
	In the radial exterior-ball setting, the explicit distorted Fourier
	transform provides a particularly useful description of the Dirichlet
	functional calculus; see \cite{LiSZ:NLS,LiXZ:NLS}.
	The multiplier theory, Littlewood--Paley estimates, and equivalence of
	Dirichlet and zero-extension homogeneous Sobolev norms are developed in
	\cite{KVZ16:IMRN Riesz tf}; see also
	\cite{IvPlan17:CPDE: heat flow,LiSZ:NLS}.
	
	For the specific problem \eqref{eq:nlu}, Xu and Yang
	\cite{XuYang:NLW} established radial dispersive and global-in-time
	Strichartz estimates and proved global well-posedness for
	\begin{align*}
		(u_0,u_1)\in
		\bigl(\dot H^s_{D,rad}(\Omega)\cap L^4(\Omega)\bigr)
		\times\dot H^{s-1}_{D,rad}(\Omega),
		\qquad \frac34<s<1.
	\end{align*}
	Their argument combines Fourier truncation with an energy estimate for a
	remainder equation.
	Xu, Yang, and You \cite{XuYangYou:NLW} subsequently treated the powers
	$3<p<5$ in the same exterior geometry.
	The purpose of the present paper is to establish global well-posedness
	for the cubic problem in the range $\frac12\leq s\leq\frac34$,
	including the critical endpoint, without an additional $L^4$ assumption
	on the initial displacement.
	
	We use the following notion of well-posedness.
	For a given radial initial pair, a mild solution satisfies the Duhamel
	formula associated with the Dirichlet wave propagators and belongs to
	\begin{align*}
		(u,\partial_tu)\in
		C\bigl(I;\dot H^s_{D,rad}(\Omega)
		\times\dot H^{s-1}_{D,rad}(\Omega)\bigr).
	\end{align*}
	Local well-posedness means existence on an interval containing zero,
	uniqueness in this continuous mild class, and continuous dependence on
	the initial data on a common local interval for nearby data.
	Global well-posedness means that the solution extends to all real times,
	with continuous dependence on every compact time interval.
	Our main result is the following.
	
	\begin{theorem}[Global well-posedness for radial data]
		\label{thm:radial global critical revised}\label{thm:nlu gwp}
		Let $\frac12\leq s\leq\frac34$.
		For every real-valued radial initial pair
		\begin{align*}
			(u_0,u_1)\in\dot H^s_{D,rad}(\Omega)
			\times\dot H^{s-1}_{D,rad}(\Omega),
		\end{align*}
		problem \eqref{eq:nlu} has a unique global mild solution satisfying
		\begin{align}\label{reg:main global revised}
			(u,\partial_tu)\in C\left(\R;
			\dot H^s_{D,rad}(\Omega)\times\dot H^{s-1}_{D,rad}(\Omega)\right).
		\end{align}
		More precisely,
		\begin{align}\label{eq:main global mild revised}
			u(t)={}&\cos(t\sqrt{-\Delta_\Omega})u_0
			+\frac{\sin(t\sqrt{-\Delta_\Omega})}{\sqrt{-\Delta_\Omega}}u_1
			\nonumber\\
			&-\int_0^t
			\frac{\sin((t-\tau)\sqrt{-\Delta_\Omega})}{\sqrt{-\Delta_\Omega}}
			u(\tau)^3\,d\tau,
		\end{align}
		with equality in $\dot H_D^s(\Omega)$.
		Uniqueness holds in the continuous mild class
		\eqref{reg:main global revised}.
		For every $T>0$, the data-to-solution map is locally Lipschitz from the
		initial-data space into
		\begin{align*}
			C\left([-T,T];\dot H^s_{D,rad}(\Omega)
			\times\dot H^{s-1}_{D,rad}(\Omega)\right).
		\end{align*}
		In addition, $u\in C(\R;L^4(\Omega))$.
	\end{theorem}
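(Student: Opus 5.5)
The plan follows the Fourier truncation scheme of Bourgain type, as adapted to the exterior ball by Xu and Yang, with two changes: the $L^4$ hypothesis is replaced by the radial Sobolev embeddings obtained from the distorted Fourier transform, and at $s=\frac12$ the $L^4$-in-time Strichartz bound is replaced by a logarithmic energy argument.

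\textbf{Step 1: splitting the data.} Fix $T>0$. Using the distorted Fourier transform, write $(u_0,u_1)=(v_0,v_1)+(w_0,w_1)$. Here $(v_0,v_1)=P_{>N}(u_0,u_1)$ carries the high frequencies and is small in $\dot H^s_D\times\dot H^{s-1}_D$. The low-frequency part $(w_0,w_1)=P_{\le N}(u_0,u_1)$ has energy of size $N^{2(1-s)}$. Its potential part is controlled by the radial embedding $\|f\|_{L^4}\lesssim\|f\|_{\dot H^{1/2}_D}$. This is the point where the $L^4$ assumption is no longer needed.

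\textbf{Step 2: the global nonlinear high-frequency part.} Let $v$ solve the full equation with data $(v_0,v_1)$. The critical Strichartz estimate for $\frac{\sin(t\sqrt{-\Delta_\Omega})}{\sqrt{-\Delta_\Omega}}$ in $L^4_{t,x}$ at $\dot H^{1/2}_D$ regularity, together with smallness of the data, gives a global solution $v$ by a contraction argument. This is the "nonlinear Fourier truncation" step.

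For $s>\frac12$ the data are only small in $\dot H^s$, not in $\dot H^{1/2}$. I would therefore take $N$ large and use the bound
\begin{align*}
\|P_{>N}u_0\|_{\dot H^{1/2}_D}\lesssim N^{\frac12-s}\|u_0\|_{\dot H^s_D}.
\end{align*}
The same Strichartz estimates then also give control of $v$ in $L^4_{t,x}$ and of $v^3$ in the dual space.

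At $s=\frac12$ there is no such gain. Instead I choose $N$ so that the $\dot H^{1/2}$ tail is below the small-data threshold. I then need a uniform bound for $v$ in the norms $L^\infty_tL^p_x$; these come from the $\sqrt p$ embedding applied to Strichartz-type norms of $v$.

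\textbf{Step 3: the energy-space remainder.} Set $w=u-v$. It solves
\begin{align*}
\partial_t^2w-\Delta w+(v+w)^3-v^3=0,\qquad (w,\partial_tw)(0)=(w_0,w_1).
\end{align*}
I construct a local solution in the affine energy space by a fixed point argument. Then I differentiate the energy $E(w)$:
\begin{align*}
\frac{d}{dt}E(w)=-\int\partial_tw\,\bigl(3w^2v+3wv^2\bigr)\,dx .
\end{align*}
I bound this by H\"older:
\begin{align*}
\bigl|\tfrac{d}{dt}E(w)\bigr|\lesssim E^{1/2}\Bigl(\|w\|_{L^{q}}^2\|v\|_{L^{r}}+\|w\|_{L^6}\|v\|_{L^{6}}^2\Bigr).
\end{align*}

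For $s>\frac12$ these terms are handled with $\|v\|_{L^\infty_x}$-type Strichartz bounds that are integrable in time, giving a Gronwall estimate.

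At the endpoint only $\|v(t)\|_{L^p}\le C\sqrt p$ is available. I split $w$ and choose $p\sim\log(2+E)$, which yields
\begin{align*}
\frac{d}{dt}E\lesssim g(t)\,E\log(2+E)
\end{align*}
with $g\in L^1_{loc}$. Osgood's lemma then gives a double-exponential bound on $[-T,T]$, so there is no energy blow-up.

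I expect this logarithmic energy inequality to be the main obstacle. It requires tracking the $p$-dependence of the constants exactly and using $\|w\|_{L^{q}}$ interpolated between $L^4$ and $L^6$, so that every term is at most $E\sqrt{p}\,E^{c/p}$.

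\textbf{Step 4: conclusion.} The global $v$ and the global energy-class $w$ give $u=v+w$ on $[-T,T]$. Since $T$ is arbitrary, this gives a global solution.
\begin{itemize}
\item \emph{Persistence of regularity.} To show $u\in C(\R;\dot H^s_D\times\dot H^{s-1}_D)$, write Duhamel for $u$ and estimate $u^3$ in the dual Strichartz norm using $v\in L^4_{t,x}$ and $w\in L^\infty_tH^1$, together with the embedding. The same decomposition gives $u\in C(\R;L^4)$.
\item \emph{Uniqueness and Lipschitz dependence.} Estimate the difference of two mild solutions on short intervals in $L^4_{t,x}\cap C\dot H^s$, using the $L^4$ bounds and the Strichartz estimate, then iterate in time. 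For data near $(u_0,u_1)$ the splitting uses the same $N$, so the bounds are uniform on $[-T,T]$ and the data-to-solution map is locally Lipschitz.
\end{itemize}
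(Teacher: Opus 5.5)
Your proposal follows the paper's architecture, with the letters swapped (your $v$ is the paper's $w$ and vice versa). The common steps are: truncation of the data at a fixed dyadic frequency; a global small solution of the full cubic equation from the high-frequency data via the critical $L^4_{t,x}$ Strichartz estimate; an affine energy construction for the remainder; a logarithmic energy inequality from the $\sqrt p$ embedding with $p\sim\log E$; and Duhamel arguments for persistence, uniqueness and Lipschitz dependence.

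Your version of the key inequality differs slightly from the paper's and works. For $p\geq 6$ the exponent $q=4p/(p-2)$ lies in $(4,6]$. Interpolating between $L^4$ and $L^6$ gives $\|w\|_q^2\lesssim E^{1/2+3/p}$, hence $|E'|\lesssim B\sqrt p\,E^{1+3/p}+B^2E$. The choice $p=\max\{6,\log(3+E)\}$ then gives $|E'|\lesssim (B+B^2)(3+E)\sqrt{\log(3+E)}$. This is stronger than the $E\log E$ bound you state, though that also suffices via Osgood. Compared with the paper, this variant uses only $\dot H^1_D\hookrightarrow L^6$ for the remainder. It dispenses with the radial pointwise bound $\|v\|_\infty\lesssim\|\nabla v\|_2$ that the paper interpolates against, at no cost since $E^{3/p}\le e^3$.

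The point to correct is your separate treatment of $s>\frac12$ by Gronwall with time-integrable $\|v\|_{L^\infty_x}$ Strichartz bounds. The only Strichartz input in this setting is the critical $L^4_{t,x}$ estimate, which gives no $L^\infty_x$ control, and you do not say where such a bound would come from. A bound of this kind can be obtained here, but by a different route: combine persistence of $\dot H^s_D$ regularity of the high-frequency solution with $\dot H^s_{D,rad}(\Omega)\hookrightarrow L^\infty(\Omega)$ for $s>\frac12$. That embedding follows from the sine-transform representation, but it is an extra argument not supplied in your proposal.

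More importantly, this branch is unnecessary. Your logarithmic argument only needs $\sup_t\|v(t)\|_{\dot H_D^{1/2}}$ to be small. That holds for every $s\in[\frac12,\frac34]$, because the high-frequency tail gains the factor $2^{J(1/2-s)}$ in $\dot H_D^{1/2}$. The paper runs the logarithmic argument uniformly in $s$, and you should do the same.

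Two small slips. In Step 1, for $s>\frac12$ the low-frequency piece need not lie in $\dot H_D^{1/2}$, since near $\lambda=0$ the weight $\lambda$ is not controlled by $\lambda^{2s}$; use $\dot H^s_{D,rad}\hookrightarrow L^4$ directly instead. In Step 4, the remainder lies in $\dot H^1_D\cap L^4$, not $H^1$, because its displacement need not be square integrable.
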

	
	\begin{remark}
		The additional $L^4$ condition is redundant in this range because
		\begin{align*}
			\dot H^s_{D,rad}(\Omega)\hookrightarrow L^4(\Omega),
			\qquad \frac14\leq s\leq\frac34.
		\end{align*}
		For $s>\frac12$, the Dirichlet boundary condition holds in the trace
		sense. At $s=\frac12$, it is understood through the critical Dirichlet
		Sobolev space and the spectral propagators in
		\eqref{eq:main global mild revised}; no ordinary pointwise boundary trace
		is asserted for general critical data.
		For $s>\frac12$, this trace assertion follows from the
		zero-extension equivalence in Theorem~\ref{thm:Sob eq} and the
		local trace theorem: multiplying the zero extension by a smooth
		cutoff supported near the boundary gives an $H^s(\R^3)$ function.
		Its trace is zero because it vanishes on the interior side of the
		sphere. At $s=\frac12$, Proposition~\ref{prop:Hardy for D-Lap}
		also gives the concrete weighted condition
		\begin{align*}
			\int_{1<|x|<2}\frac{|u(t,x)|^2}{|x|-1}\,dx
			\lesssim\|u(t)\|_{\dot H_D^{1/2}}^2<\infty.
		\end{align*}
		This condition does not assert the existence of an ordinary
		boundary trace at the critical index.
		The theorem concerns global well-posedness and does not assert scattering.
	\end{remark}
	
	We next describe the estimates and the decomposition used in the proof.
	For radial $f\in\dot H_D^s(\Omega)$, the embeddings needed below take the form
	\begin{align}\label{intro:radial embedding}
		\|f\|_{L^p(\Omega)}
		\lesssim_{s,p}\|f\|_{\dot H^s_D(\Omega)},
		\qquad 2\leq p<\infty,\quad
		\frac12-\frac1p\leq s\leq\frac32-\frac3p.
	\end{align}
	These embeddings can also be deduced from the radial weighted
	Hardy--Littlewood--Sobolev inequality of
	De N\'apoli, Drelichman, and Dur\'an \cite{DeNapoliDD:radial HLS},
	combined with the Sobolev norm equivalence in
	\cite{KVZ16:IMRN Riesz tf}.
	We include a direct proof using the distorted Fourier transform.
	For radial $f\in\dot H_D^{1/2}(\Omega)$, we also keep track of the dependence on $p$:
	\begin{align}\label{intro:critical embedding}
		\|f\|_{L^p(\Omega)}
		\leq C\sqrt p\,\|f\|_{\dot H_D^{1/2}(\Omega)},
		\qquad 4\leq p<\infty,
	\end{align}
	where $C$ is independent of $p$.
	After separating low and high spectral frequencies, the high-frequency
	estimate is consistent with the classical one-dimensional critical
	Sobolev and fractional Moser--Trudinger theory; see
	\cite{IulaMM:fractional MT,Ozawa:critical Sob}.
	The low-frequency part is controlled directly through the sine kernel
	of the distorted Fourier transform.
	
	We follow the Fourier truncation strategy of
	\cite{Bourgain98:FTM,GallPlan03:NLW,KenigPV00:NLW,XuYang:NLW}.
	For a sufficiently large, fixed $J$, write $u=w+v$, where $w$ solves
	the full nonlinear equation with initial pair
	$(P^\Omega_{>2^J}u_0,P^\Omega_{>2^J}u_1)$.
	This pair is small in
	$\dot H_D^{1/2}\times\dot H_D^{-1/2}$, including when $s=\frac12$,
	by convergence of the critical spectral tails.
	The global Strichartz estimates yield a global small solution $w$.
	The remainder has low-frequency initial data and satisfies
	\begin{align*}
		\partial_t^2v-\Delta_\Omega v+v^3=-3v^2w-3vw^2.
	\end{align*}
	In particular, $w$ is a nonlinear solution, and neither component is
	assumed to retain its initial spectral support.
	
	The remainder has finite initial energy, although its initial
	displacement need not belong to $L^2(\Omega)$.
	We therefore construct it locally by writing $v=v_0+h$ and solving for
	$(h,\partial_th)$ in $H_D^1(\Omega)\times L^2(\Omega)$.
	To extend this solution globally, we combine the radial energy-space
	bound $\|v\|_{L^\infty}\lesssim\|v\|_{\dot H_D^1}$ with
	\eqref{intro:critical embedding} and interpolate between $L^4$ and
	$L^\infty$.
	Writing $B=\sup_{t\in\R}\|w(t)\|_{\dot H_D^{1/2}}$, we obtain,
	for every $p\geq6$,
	\begin{align*}
		|E(v)'(t)|
		\lesssim B\sqrt p\,E(v)(t)^{1+1/p}+B^2E(v)(t).
	\end{align*}
	Choosing $p$ comparable to $\max\{6,\log(3+E(v)(t))\}$ yields a
	logarithmic differential inequality that excludes finite-time energy
	blow-up.
	This optimization is of Yudovich type; a related use of large-exponent
	estimates for cubic wave equations appears in the probabilistic setting
	of \cite[Section~5]{BurqTzvetkov:cubic}.
	Here the required estimates follow deterministically from radial symmetry
	and the exterior-ball geometry.
	Finally, Duhamel estimates recover the initial $s$-level regularity,
	and the local theory gives uniqueness and locally Lipschitz dependence
	on every finite time interval.
	
	The paper is organized as follows.
	Section~\ref{sect:DF transf} recalls the distorted Fourier transform and
	the associated function spaces, and establishes the radial embeddings
	used in the nonlinear analysis.
	Section~3 presents the linear estimates.
	Section~4 proves the local theory, constructs the nonlinear high-frequency
	component, and establishes the global energy bounds for the remainder,
	thereby proving Theorem~\ref{thm:nlu gwp}.
	
	\subsection*{Notation}\label{sect:notation}
	For nonnegative quantities $X$ and $Y$, the notation $X\lesssim Y$
	means $X\leq CY$ for some constant $C>0$, and $X\gtrsim Y$
	means $Y\lesssim X$. We write $X\approx Y$ when both inequalities
	hold. Subscripts indicate the parameters on which the constants may
	depend. Constants may change from line to line.
	We write $\Omega^c=\R^3\setminus\Omega$,
	$B(0,1)=\{x\in\R^3:|x|<1\}$, and
	$2^{\Z}=\{2^k:k\in\Z\}$. The notation
	$\operatorname{dist}(x,E)=\inf_{y\in E}|x-y|$ denotes distance
	to a nonempty set $E$. We use $C_c^\infty(\Omega)$ for smooth
	functions compactly supported in $\Omega$ and
	$C_{c,rad}^\infty(\Omega)$ for its radial subspace.
	The subscript $rad$ always indicates radial symmetry.
	The notation $X\hookrightarrow Y$ means a continuous embedding.
	The symbol $I$ in an operator expression denotes the identity
	operator; elsewhere $I$ denotes the time interval specified locally.
	We write $\mathbf1_E$ for the characteristic function of $E$, equal
	to one on $E$ and zero outside it, and
	$\operatorname{sgn}(\xi)$ for the sign of the real number $\xi$,
	with $\operatorname{sgn}(0)=0$.
	
	Unless otherwise specified, spatial norms are taken over $\Omega$
	with the standard measure $dx$; for radial functions,
	\begin{align*}
		\|f\|_{L^p(\Omega)}^p
		=4\pi\int_1^\infty |f(r)|^p r^2\,dr,
		\qquad 1\leq p<\infty.
	\end{align*}
	
	We abbreviate $\|f\|_p=\|f\|_{L^p(\Omega)}$ and use the mixed
	norm notation $L_t^qL_x^r(I\times\Omega)=L^q(I;L^r(\Omega))$.
	For finite $q$ and $r$,
	\begin{align*}
		\|f\|_{L_t^qL_x^r(I\times\Omega)}
		=\left(\int_I
		\left(\int_\Omega |f(t,x)|^r\,dx\right)^{q/r}dt
		\right)^{1/q},
	\end{align*}
	with the usual essential-supremum interpretation when an exponent
	is infinite. We write $L_{t,x}^q=L^q(I\times\Omega)$.
	The abbreviations $C_tX$, $L_t^qX$, and $C_bX$ refer to the time
	interval indicated in the surrounding argument.
	For $1<p<\infty$, $p'=p/(p-1)$ is the H\"older conjugate exponent.
	An intersection $X\cap Y$ is equipped with the sum norm.
	
	For a Banach space $X$, $C(I;X)$ denotes the space of continuous
	$X$-valued functions, and $C_b(I;X)$ its subspace of bounded continuous
	functions. On a compact interval these spaces carry the supremum norm.
	We use the unitary Euclidean Fourier convention
	\begin{align*}
		\widehat F(\xi)=(2\pi)^{-m/2}
		\int_{\R^m}e^{-ix\cdot\xi}F(x)\,dx,
		\qquad
		\|F\|_{\dot H^\sigma(\R^m)}^2
		=\int_{\R^m}|\xi|^{2\sigma}|\widehat F(\xi)|^2\,d\xi,
	\end{align*}
	initially on test functions, with the usual extensions to the
	Sobolev spaces used below. Here $m=1$ or $3$.
	We use the $L^2$ pairing
	$\langle f,g\rangle_{L^2}=\int_\Omega f\overline g\,dx$;
	the corresponding extended Sobolev duality is specified in
	Section~\ref{sect:DF transf}. Derivatives of rough solutions are
	understood in the distributional sense. All logarithms are natural.

	\section{Distorted Fourier transform and Dirichlet function spaces}
	\label{sect:DF transf}
	
	We collect the spectral representation and function-space results used
	below. The radial distorted Fourier transform is recalled from
	\cite{LiSZ:NLS,LiXZ:NLS}; the multiplier, Littlewood--Paley, Hardy, and
	Sobolev norm equivalence results are due to
	Killip, Visan, and Zhang~\cite{KVZ16:IMRN Riesz tf}.
	These results are used as background tools. We give the radial
	representation explicitly to fix the normalization in the subsequent
	embedding and nonlinear estimates.
	
	The Dirichlet Laplacian $-\Delta_\Omega$ is the nonnegative self-adjoint
	operator on $L^2(\Omega)$ associated with the quadratic form
	$\int_\Omega |\nabla f|^2\,dx$ with form domain $H_0^1(\Omega)$.
	Its operator domain is $H^2(\Omega)\cap H_0^1(\Omega)$.
	Here $H^k(\Omega)=W^{k,2}(\Omega)$, for $k=1,2$, is the space of
	$L^2$ functions whose distributional spatial derivatives up to order
	$k$ belong to $L^2$, and
	$H_0^1(\Omega)=\overline{C_c^\infty(\Omega)}^{\,H^1(\Omega)}$.
	
	For a radial test function $f$, set
	\begin{align*}
		h(\rho)=(1+\rho)f(1+\rho),\qquad \rho>0.
	\end{align*}
	Then
	\begin{align*}
		(1+\rho)(-\Delta_\Omega f)(1+\rho)=-h''(\rho),
		\qquad h(0)=0,
	\end{align*}
	and
	\begin{align*}
		\|f\|_{L^2(\Omega)}^2=4\pi\int_0^\infty |h(\rho)|^2\,d\rho.
	\end{align*}
	Thus the radial Dirichlet Laplacian is unitarily equivalent to the
	Dirichlet Laplacian on the half-line. In particular, its radial spectrum
	is purely absolutely continuous and equals $[0,\infty)$, with no
	$L^2$ eigenvalues.
	
	The corresponding generalized eigenfunctions are
	\begin{equation}\label{quan:eign funt}
		e_\lambda(r)=\frac{\sin(\lambda(r-1))}{r},
		\qquad \lambda>0,\quad r\geq1.
	\end{equation}
	They satisfy $-\Delta_\Omega e_\lambda=\lambda^2e_\lambda$ and
	$e_\lambda(1)=0$. These real-valued modes contain both incoming and
	outgoing components. The parameter $\lambda$ is the nonnegative square
	root of the spectral variable; integration over $\lambda>0$ already
	gives the full sine-transform representation. Zero is a spectral
	threshold and does not contribute an $L^2$ eigenfunction or a discrete
	term to this representation.
	
	For $f\in C_{c,rad}^\infty(\Omega)$, define
	\begin{align}\label{def:DF}
		\mathcal F_Df(\lambda)
		&=\sqrt{\frac2\pi}\int_1^\infty
		e_\lambda(r)f(r)r^2\,dr\nonumber\\
		&=\sqrt{\frac2\pi}\int_0^\infty
		\sin(\lambda\rho)h(\rho)\,d\rho,
		\qquad \lambda>0.
	\end{align}
	The last expression is the unitary sine transform of $h$.
	Consequently,
	\begin{align}\label{Id:resolution}
		\int_1^\infty |f(r)|^2r^2\,dr
		=\int_0^\infty |\mathcal F_Df(\lambda)|^2\,d\lambda.
	\end{align}
	The transform therefore extends from test functions to an isomorphism
	from $L^2_{rad}(\Omega)$ onto $L^2(0,\infty)$, and
	$(4\pi)^{1/2}\mathcal F_D$ is unitary for the standard measure $dx$.
	Its inverse is
	\begin{align}\label{def:inverse DF}
		\mathcal F_D^{-1}g(r)
		=\sqrt{\frac2\pi}\frac1r\int_0^\infty
		\sin(\lambda(r-1))g(\lambda)\,d\lambda.
	\end{align}
	For general $g\in L^2(0,\infty)$, this formula is interpreted as an
	$L^2$ limit of spectral truncations, rather than as an absolutely
	convergent integral. The sine-transform inversion theorem yields
	\begin{align}\label{Id:resol2}
		\mathcal F_D^{-1}\mathcal F_Df=f,
		\qquad \mathcal F_D\mathcal F_D^{-1}g=g
	\end{align}
	in the respective $L^2$ spaces.
	
	For a bounded Borel multiplier $m$, the radial functional calculus is
	given by
	\begin{align*}
		m(\sqrt{-\Delta_\Omega})f
		=\mathcal F_D^{-1}\bigl(m(\lambda)\mathcal F_Df(\lambda)\bigr).
	\end{align*}
	This identity also specifies the multipliers on the spectral Sobolev
	spaces used below. Multiplication of functions in physical space does
	not in general preserve, or add, their distorted spectral supports.
	
	\begin{theorem}[Multiplier theorem; \cite{KVZ16:IMRN Riesz tf}]
		\label{thm:multiplier}
		Let $m\in C^2((0,\infty))$ satisfy
		\begin{align*}
			|\partial_\lambda^k m(\lambda)|\leq C_k\lambda^{-k},
			\qquad k=0,1,2.
		\end{align*}
		Then $m(\sqrt{-\Delta_\Omega})$ extends to a bounded operator on
		$L^p(\Omega)$ for every $1<p<\infty$.
	\end{theorem}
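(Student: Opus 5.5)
The plan is to obtain Theorem~\ref{thm:multiplier} as an instance of the abstract spectral multiplier theorem for nonnegative self-adjoint operators with Gaussian heat kernel bounds on spaces of homogeneous type. This is the Duong--Ouhabaz--Sikora version of the Hörmander--Mikhlin theorem. It applies to all of $L^p(\Omega)$, not only the radial subspace. Set $L=-\Delta_\Omega$ and $F(\mu)=m(\sqrt\mu)$, so that $m(\sqrt{-\Delta_\Omega})=F(L)$. The chain rule applied to the hypotheses gives
\begin{align*}
|\partial_\mu^kF(\mu)|\lesssim\mu^{-k},\qquad k=0,1,2.
\end{align*}
With a fixed bump $\eta\in C_c^\infty((1/2,2))$, this means that
\begin{align*}
\sup_{t>0}\|\eta(\cdot)F(t\,\cdot)\|_{W^{2,\infty}(\R)}<\infty .
\end{align*}
This is a Hörmander condition of order $2$, which exceeds the required threshold $n/2=3/2$ in dimension $n=3$.

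\textbf{Step 1: geometric hypothesis.} Equip $\Omega$ with the Euclidean distance and Lebesgue measure; this must be a doubling metric measure space. The complement of $\Omega$ is the closed unit ball, which is convex. Hence for $x\in\Omega$ and $r>0$ the set $B(x,r)\cap\Omega$ contains a ball of radius comparable to $r$, so
\begin{align*}
|B(x,r)\cap\Omega|\approx r^3
\end{align*}
uniformly in $x$ and $r$. Both the doubling property and the polynomial volume growth follow.

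\textbf{Step 2: Gaussian bounds.} The Dirichlet heat kernel must satisfy
\begin{align*}
0\le p_t^\Omega(x,y)\le(4\pi t)^{-3/2}e^{-|x-y|^2/(4t)} .
\end{align*}
I would prove this by domain monotonicity: by the parabolic maximum principle, $e^{t\Delta_\Omega}$ is positivity preserving and dominated pointwise by $e^{t\Delta_{\R^3}}$ applied to zero extensions. Equivalently, one can use the Beurling--Deny criteria for the Dirichlet form on $H_0^1(\Omega)$ together with comparison with the whole-space form. Combined with Step 1, this gives exactly the Gaussian upper bound of the form $|B(x,\sqrt t)|^{-1}e^{-c|x-y|^2/t}$ required by the abstract theorem.

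\textbf{Step 3: conclusion.} The abstract theorem yields that $F(L)$ is of weak type $(1,1)$. Since $F$ is bounded, the spectral theorem gives boundedness on $L^2(\Omega)$. Marcinkiewicz interpolation then gives $1<p\le2$, and duality handles $2<p<\infty$; for duality we use that $\overline F(L)$ is the adjoint of $F(L)$ and that $\overline F$ obeys the same bounds.

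The main obstacle is the weak $(1,1)$ estimate inside the abstract theorem, should one need to reprove it rather than cite it. The route I would follow is a Calderón--Zygmund decomposition in which each bad piece $b_Q$ is replaced by $(I-e^{-r_Q^2L})b_Q$. The operator $F(L)(I-e^{-r_Q^2L})$ must then be controlled away from $2Q$ by weighted $L^2$ kernel estimates. Those estimates come from the Gaussian bounds via a Phragmén--Lindelöf or finite-propagation-speed argument for $\cos(t\sqrt L)$; the Dirichlet wave equation has unit propagation speed, which makes the latter convenient. The loss of $n/2+\varepsilon$ derivatives in this step is precisely what the $C^2$ hypothesis pays for.
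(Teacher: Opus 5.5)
Your proposal is correct and is the standard argument behind this result: the paper gives no proof of its own and simply cites Killip--Visan--Zhang. Their multiplier theorem is obtained the same way, from the Euclidean Gaussian upper bound for the Dirichlet heat kernel (domain monotonicity) combined with an abstract Hörmander--Mikhlin theorem for operators with Gaussian bounds on a doubling space. Your checks that $|B(x,r)\cap\Omega|\approx r^3$ uniformly, and that the hypothesis gives a $W^{2,\infty}$ Hörmander condition of order $2=\lfloor 3/2\rfloor+1>3/2$, supply exactly the hypotheses that abstract theorem needs.
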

	
	Fix a smooth nonincreasing function $\phi:[0,\infty)\to[0,1]$ with
	\begin{align}\label{def:phi}
		\phi(\lambda)=1\quad(0\leq\lambda\leq1),
		\qquad \phi(\lambda)=0\quad(\lambda\geq2).
	\end{align}
	For $N\in2^{\Z}$, set
	\begin{align}\label{def:phi scal}
		\phi_N(\lambda)=\phi(\lambda/N),\qquad
		\psi_N(\lambda)=\phi_N(\lambda)-\phi_{N/2}(\lambda),
	\end{align}
	and define
	\begin{align*}
		P^\Omega_{\leq N}f&=\phi_N(\sqrt{-\Delta_\Omega})f,\\
		P^\Omega_Nf&=\psi_N(\sqrt{-\Delta_\Omega})f,\\
		P^\Omega_{>N}f&=(I-P^\Omega_{\leq N})f,\\
		\widetilde P^\Omega_Nf
		&=P^\Omega_{N/2}f+P^\Omega_Nf+P^\Omega_{2N}f.
	\end{align*}
	Here $\sum_{N\in2^{\Z}}\psi_N(\lambda)=1$ for $\lambda>0$.
	
	\begin{definition}\label{def:Besov}
		For $s\in\R$, $1<q<\infty$, and $1\leq r<\infty$, we use the
		homogeneous Dirichlet Besov norm
		\begin{align*}
			\|f\|_{\dot B^s_{D,q,r}(\Omega)}
			=\left(\sum_{N\in2^{\Z}}
			N^{sr}\|P^\Omega_N f\|_{L^q(\Omega)}^r\right)^{1/r}.
		\end{align*}
		The space is obtained by completion of finite spectral sums of test
		functions under this norm. More precisely, the initial class consists
		of finite sums of functions $P^\Omega_N\varphi$ with
		$\varphi\in C_c^\infty(\Omega)$. Only the cases $s=0$ and $r=2$
		are used in the Strichartz argument below.
	\end{definition}
	
	\begin{lemma}[Bernstein estimates; \cite{KVZ16:IMRN Riesz tf,LiSZ:NLS}]
		\label{lem:berns ests}
		For radial $f\in C_c^\infty(\Omega)$ and $s\in\R$,
		\begin{align}
			\|P^\Omega_{\leq N}f\|_{L^p}
			+\|P^\Omega_Nf\|_{L^p}
			&\lesssim\|f\|_{L^p},\label{est:LP bd1}\\
			N^s\|P^\Omega_Nf\|_{L^p}
			&\approx\|(-\Delta_\Omega)^{s/2}P^\Omega_Nf\|_{L^p},
			\label{est:LP bd2}
		\end{align}
		for $1\leq p\leq\infty$. Moreover,
		\begin{align*}
			\|P^\Omega_{\leq N}f\|_{L^q}
			+\|P^\Omega_Nf\|_{L^q}
			\lesssim N^{3(1/p-1/q)}\|f\|_{L^p},
			\qquad 1\leq p\leq q\leq\infty.
		\end{align*}
		The constants are independent of $N$. For $1<p<\infty$,
		$f=\sum_{N\in2^{\Z}}P^\Omega_Nf$ with convergence in $L^p(\Omega)$.
	\end{lemma}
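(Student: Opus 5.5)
The plan is to reduce everything to the half-line picture via the conjugation $h(\rho)=(1+\rho)f(1+\rho)$, in which the radial Dirichlet Laplacian becomes $-\partial_\rho^2$ with $h(0)=0$. The spectral multipliers then act on the odd extension $\tilde h$ of $h$ to $\R$ as ordinary Euclidean Fourier multipliers, since the sine transform of $h$ is (up to a constant) the Fourier transform of $\tilde h$. Concretely,
\[
(1+\rho)\,\bigl(m(\sqrt{-\Delta_\Omega})f\bigr)(1+\rho)=\bigl(m(|D|)\tilde h\bigr)(\rho),\qquad \rho>0 .
\]
Hence $P^\Omega_N f(r)=r^{-1}\,(K_N*\tilde h)(r-1)$, where $K_N(y)=N\check\psi(Ny)$ and $\check\psi$ is the one-dimensional inverse Fourier transform of $\psi(|\cdot|)$, which is Schwartz. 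The same formula holds for $P^\Omega_{\le N}$, and for $(-\Delta_\Omega)^{s/2}P^\Omega_N$ with the symbol $|\lambda|^s\psi_N(\lambda)$. All kernels obey
\[
|K_N(\rho,\rho')|\lesssim_M N(1+N|\rho-\rho'|)^{-M}
\]
after folding the odd extension into a kernel on $(0,\infty)^2$.

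For \eqref{est:LP bd1} I would write
\[
P^\Omega_N f(r)=\int_1^\infty \frac{r'}{r}\,K_N(r-1,r'-1)\,f(r')\,dr'
\]
and estimate the operator on $L^p(r^2\,dr)$ by Schur's test. This requires controlling the weight ratio $(r'/r)^{1-2/p}$ coming from $r'\cdot r'^{-2/p}\,r^{2/p}/r$. When $|r-r'|\lesssim\max(r,r')/2$ the ratio is bounded. In the complementary region, the ratio is at most $(1+|r-r'|)^{1}$ and is absorbed by the rapid decay $(1+N|r-r'|)^{-M}$ — but only when $N\gtrsim1$.

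The main obstacle is low frequency, $N\ll1$, with $r\gg r'$ or $r'\gg r$. Here the one-dimensional kernel does not see the weight, and the $1/r$ conjugation is genuinely non-Euclidean. I would first exploit the oddness: the folded kernel is
\[
K_N(\rho-\rho')-K_N(\rho+\rho').
\]
For $\rho'\ll N^{-1}$ this gains a factor $N\rho'\cdot N\min(1,\dots)$ by the mean value theorem, which compensates the growth of $r'/r$. If Schur's test still fails near the endpoints $p=1$ or $p=\infty$, I would fall back on citing \cite{KVZ16:IMRN Riesz tf,LiSZ:NLS}, where these bounds are proved via heat kernel (Gaussian) bounds for $e^{t\Delta_\Omega}$, which exist for exterior domains. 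Writing $\psi_N(\sqrt{x})=F(x/N^2)e^{-x/N^2}$ with $F$ smooth compactly supported then gives uniformly $L^p$-bounded operators for $1\le p\le\infty$. I regard this route as the most robust, and would use it for all of \eqref{est:LP bd1}. The Gaussian bound also yields the $L^p\to L^q$ Bernstein estimate, since the heat kernel at time $N^{-2}$ is bounded by $N^3$ and lies in $L^1$ uniformly. Interpolating $L^p\to L^\infty$ (from $\|K\|_{L^{p'}}\lesssim N^{3/p}$) with the $L^q\to L^q$ bound then gives $N^{3(1/p-1/q)}$.

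For the equivalence \eqref{est:LP bd2}, the inequality $\lesssim$ follows by writing
\[
P^\Omega_N=\bigl[\lambda^{-s}\tilde\psi_N(\lambda)\bigr](\sqrt{-\Delta_\Omega})\,(-\Delta_\Omega)^{s/2}P^\Omega_N,
\]
where $\tilde\psi_N=\psi_{N/2}+\psi_N+\psi_{2N}$ equals $1$ on the support of $\psi_N$. The reverse inequality follows by writing
\[
(-\Delta_\Omega)^{s/2}P^\Omega_N=\bigl[\lambda^{s}\tilde\psi_N\bigr](\sqrt{-\Delta_\Omega})P^\Omega_N.
\]
Both multipliers are $N^{\mp s}$ times rescalings of a fixed smooth compactly supported function away from zero, so the bound of the previous step applies uniformly in $N$, with constants independent of $N$.

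Finally, for the convergence $f=\sum_N P^\Omega_N f$ in $L^p$ with $1<p<\infty$, I would write the partial sums as $P^\Omega_{\le M}-P^\Omega_{\le K}$. Convergence is then equivalent to $P^\Omega_{\le M}f\to f$ as $M\to\infty$ and $P^\Omega_{\le K}f\to0$ as $K\to0$. Both hold in $L^2$ by the spectral theorem, using \eqref{Id:resolution} and the absence of eigenvalue at $0$. They transfer to $L^p$ by the uniform bound \eqref{est:LP bd1} combined with density of $C^\infty_{c,rad}$ functions. For the $K\to0$ limit I would use Bernstein: $\|P^\Omega_{\le K}f\|_{L^p}\lesssim K^{3(1/q-1/p)}\|f\|_{L^q}$ for some $q<p$ with $f\in L^q$ (valid since $f$ is a test function), which tends to $0$.
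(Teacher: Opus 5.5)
Your proposal is correct. The paper gives no proof of this lemma and imports it from Killip--Visan--Zhang and Li--Smith--Zhang, which is the heat-kernel route you settle on for \eqref{est:LP bd1} and the $L^p\to L^q$ bound. Your deduction of \eqref{est:LP bd2} via $\widetilde P^\Omega_N$ is sound, since $\psi_{N/2}+\psi_N+\psi_{2N}=\phi_{2N}-\phi_{N/4}\equiv1$ on $[N/2,2N]$. So is your use of Bernstein for the limit $P^\Omega_{\le K}f\to0$.

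The genuinely different part is your half-line Schur reduction, and it closes without the fallback. This would give a self-contained radial proof in the spirit of Section~2.

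\begin{itemize}
\item The endpoints are the easy cases. At $p=\infty$ you only need the row sums $\sup_\rho\int_0^\infty\frac{1+\rho'}{1+\rho}\,|K_N(\rho-\rho')-K_N(\rho+\rho')|\,d\rho'$.
\item If $N\geq1$, or if $N\le1$ and $N\rho\geq1$, the weight ratio is at most $1+N|\rho-\rho'|$ and is absorbed by the kernel decay.
\item If $N\le1$ and $N\rho\le1$, oddness gives a gain in the smaller variable: $|K_N(\rho-\rho')-K_N(\rho+\rho')|\lesssim N^2\rho\,(1+N\rho')^{-M}$. The row integral is then $\lesssim\frac{\rho}{1+\rho}(1+N)\lesssim1$.
\item The case $p=1$ is the same computation with the variables exchanged, because the folded kernel is symmetric. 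Riesz--Thorin then covers $1<p<\infty$.
\end{itemize}

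For $p>2$ the weight grows when $\rho'\gg\rho$, so the gain must be taken in $\rho$, not in $\rho'$ as you wrote. The cited heat-kernel approach buys generality: nonradial data and arbitrary exterior domains. The half-line route is elementary but specific to radial data outside a ball.

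Three small repairs are needed.

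\begin{itemize}
\item The factorization $\psi_N(\sqrt x)=F(x/N^2)e^{-x/N^2}$ does not by itself give uniform $L^1$ and $L^\infty$ bounds. The operator $F(-\Delta_\Omega/N^2)$ is again a spectral multiplier of the same type, so you need complex-time Gaussian bounds or the kernel estimates of the cited works. That step is a citation, not an argument.
\item For the $L^p\to L^q$ Bernstein bound, interpolate $L^p\to L^p$ with $L^p\to L^\infty$, or apply Young's inequality to the pointwise kernel bound. Interpolating $L^p\to L^\infty$ with $L^q\to L^q$ does not produce an $L^p\to L^q$ estimate.
\item Uniform bounds plus density do not transfer the $L^2$ convergence $P^\Omega_{\le M}f\to f$ to $L^p$ unless you already know $L^p$ convergence on a dense class. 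Instead use $\|g\|_p\le\|g\|_2^\theta\|g\|_{p_1}^{1-\theta}$ with $p_1\in\{1,\infty\}$ and $g=P^\Omega_{\le M}f-f$. Its $L^{p_1}$ norm is bounded uniformly in $M$ by \eqref{est:LP bd1}.
\end{itemize}
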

	
	\begin{definition}\label{def:Sob DB}
		For $s\geq0$ and $1<p<\infty$, let $\dot H_D^{s,p}(\Omega)$ and
		$H_D^{s,p}(\Omega)$ be the completions of $C_c^\infty(\Omega)$ under
		\begin{align*}
			\|f\|_{\dot H_D^{s,p}}&=\|(-\Delta_\Omega)^{s/2}f\|_{L^p},\\
			\|f\|_{H_D^{s,p}}&=\|(I-\Delta_\Omega)^{s/2}f\|_{L^p}.
		\end{align*}
		For $p=2$, we write $\dot H_D^s$ and $H_D^s$.
		The subscript $rad$ denotes the radial subspace.
	\end{definition}
	
	Integration by parts on radial test functions gives
	\begin{align*}
		\mathcal F_D(-\Delta_\Omega f)(\lambda)
		=\lambda^2\mathcal F_Df(\lambda).
	\end{align*}
	With the standard measure $dx$, the spectral Sobolev norm is therefore
	\begin{align}\label{eq:radial spectral Sob norm}
		\|f\|_{\dot H_D^s(\Omega)}^2
		=4\pi\int_0^\infty\lambda^{2s}
		|\mathcal F_Df(\lambda)|^2\,d\lambda.
	\end{align}
	In particular, $(4\pi)^{1/2}\mathcal F_D$ identifies the radial
	homogeneous energy space with $L^2((0,\infty),\lambda^2\,d\lambda)$.
	For the negative indices used in this paper, we define
	$\dot H^{-t}_{D,rad}(\Omega)$, $0<t\leq\frac34$, by Hilbert-space
	duality with $\dot H^t_{D,rad}(\Omega)$ relative to the extended $L^2$
	pairing. Equivalently, these spaces have the weighted spectral norm
	\eqref{eq:radial spectral Sob norm} with $s=-t$.
	For a general element of a spectral Sobolev space, $\mathcal F_D f$
	is understood by this extension; no absolutely convergent integral
	representation is asserted.
	More explicitly, for $0<t\leq\frac34$, the extended pairing satisfies
	\begin{align*}
		\langle F,\varphi\rangle
		&=4\pi\int_0^\infty
		\mathcal F_DF(\lambda)
		\overline{\mathcal F_D\varphi(\lambda)}\,d\lambda,\\
		\|F\|_{\dot H_D^{-t}}
		&=\sup_{\substack{\varphi\in C_{c,rad}^\infty(\Omega)\\
				\|\varphi\|_{\dot H_D^t}\leq1}}
		|\langle F,\varphi\rangle|.
	\end{align*}
	The integral is well defined by weighted Cauchy--Schwarz.
	These identities follow from the weighted $L^2$ spectral realization
	and the density of radial test functions in $\dot H_D^t$.

	\begin{proposition}[Square-function estimate; {\cite[Theorem~4.3]{KVZ16:IMRN Riesz tf}}]
		\label{prop:squ funt}
		For $1<p<\infty$, $s\geq0$, and $f\in C_c^\infty(\Omega)$,
		\begin{align*}
			\|(-\Delta_\Omega)^{s/2}f\|_{L^p}
			\approx
			\left\|\left(\sum_{N\in2^{\Z}}
			N^{2s}|P^\Omega_N f|^2\right)^{1/2}\right\|_{L^p}.
		\end{align*}
	\end{proposition}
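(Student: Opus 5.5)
The statement is the square-function estimate of Killip, Visan and Zhang, cited as \cite[Theorem~4.3]{KVZ16:IMRN Riesz tf}. The plan is the classical Rademacher randomization argument, fed by the multiplier theorem (Theorem~\ref{thm:multiplier}) applied to randomized multipliers. Reducing first to $s=0$ would itself require the $L^p$ boundedness of the imaginary powers $(-\Delta_\Omega)^{i\gamma}$, so it is cleaner to absorb the weight $N^{s}$ directly into the multiplier.

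\emph{Upper bound for the square function.} Let $\{\epsilon_N\}_{N\in2^{\Z}}$ be independent Rademacher signs, and set
\begin{align*}
m_\epsilon(\lambda)=\sum_{N\in2^{\Z}}\epsilon_N (N/\lambda)^{s}\psi_N(\lambda).
\end{align*}
For each $\lambda$ only two or three terms are nonzero, and on $\operatorname{supp}\psi_N$ we have $\lambda\approx N$. Hence $|\partial_\lambda^k m_\epsilon(\lambda)|\le C_k\lambda^{-k}$ for $k=0,1,2$, uniformly in $\epsilon$. Theorem~\ref{thm:multiplier} then gives
\begin{align*}
\Bigl\|\sum_N\epsilon_N N^{s}P^\Omega_Nf\Bigr\|_{L^p}
=\|m_\epsilon(\sqrt{-\Delta_\Omega})(-\Delta_\Omega)^{s/2}f\|_{L^p}
\lesssim\|(-\Delta_\Omega)^{s/2}f\|_{L^p}.
\end{align*}
Raise this to the $p$-th power, average over $\epsilon$, and apply Khintchine's inequality pointwise in $x$. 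This bounds the square function by $\|(-\Delta_\Omega)^{s/2}f\|_{L^p}$.

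Before the sums can be manipulated, one has to check that $(-\Delta_\Omega)^{s/2}f\in L^p\cap L^2$ for test functions $f$. One also needs that $\sum_N P^\Omega_N g$ converges in $L^p$, which is the last assertion of Lemma~\ref{lem:berns ests}. With that, the finite-sum version extends by Fatou's lemma.

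\emph{Lower bound by duality.} Put $g=(-\Delta_\Omega)^{s/2}f$ and pair it with $h\in L^{p'}\cap L^2$. Write $g=\sum_N P^\Omega_N g$ and insert $\widetilde P^\Omega_N$, using $P^\Omega_N=P^\Omega_N\widetilde P^\Omega_N$, which holds since $\psi_N(\phi_{2N}-\phi_{N/4})=\psi_N$. Self-adjointness then gives
\begin{align*}
\langle g,h\rangle=\sum_N\langle N^{s}P^\Omega_Nf,\;N^{-s}(-\Delta_\Omega)^{s/2}\widetilde P^\Omega_N h\rangle.
\end{align*}
Apply Cauchy--Schwarz in $N$ and then H\"older in $x$. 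This reduces the claim to
\begin{align*}
\Bigl\|\Bigl(\sum_N |N^{-s}(-\Delta_\Omega)^{s/2}\widetilde P^\Omega_N h|^2\Bigr)^{1/2}\Bigr\|_{L^{p'}}\lesssim\|h\|_{L^{p'}}.
\end{align*}
This follows by the same randomization as above, with multiplier $\sum_N\epsilon_N(\lambda/N)^{s}\widetilde\psi_N(\lambda)$, which again satisfies the Mikhlin bounds uniformly in $\epsilon$. Taking the supremum over $h$ in a dense class yields the reverse inequality.

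\emph{Main obstacle.} Everything rests on Theorem~\ref{thm:multiplier}, which is exactly the hard input: its proof in \cite{KVZ16:IMRN Riesz tf} uses Gaussian heat kernel bounds for $e^{t\Delta_\Omega}$. Granting it, the only delicate point is justifying the algebraic manipulations for $f\in C_c^\infty(\Omega)$. Here $(-\Delta_\Omega)^{s/2}f$ need not be compactly supported, so I would work with finite sums $\sum_{M\le N\le K}$ and pass to the limit. For the radial case one could alternatively verify these convergences directly through the explicit transform \eqref{def:DF}.
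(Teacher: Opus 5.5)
The paper gives no argument for this proposition; it imports the estimate directly from Killip, Visan, and Zhang. You instead derive it from Theorem~\ref{thm:multiplier}, which the paper also imports. For the upper bound you use Rademacher randomization with the uniformly Mikhlin symbols $\sum_N\epsilon_N(N/\lambda)^s\psi_N(\lambda)$ and Khintchine's inequality. For the lower bound you use duality through $P^\Omega_N=P^\Omega_N\widetilde P^\Omega_N$ and the symbols $\sum_N\epsilon_N(\lambda/N)^s\widetilde\psi_N(\lambda)$, with $\widetilde\psi_N=\psi_{N/2}+\psi_N+\psi_{2N}$. This is correct, and it is the standard way square-function bounds follow from a Mikhlin-type multiplier theorem. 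What your route buys is that the paper's Littlewood--Paley input reduces to a single black box, with constants visibly depending only on $s$, $p$ and the symbol bounds; the citation buys only brevity.

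Three points tighten your write-up.
\begin{itemize}
\item Uniformity in $\epsilon$ uses the quantitative form of Theorem~\ref{thm:multiplier}, namely that the operator norm is controlled by $C_0,C_1,C_2$. This is how such theorems are proved. It also follows from the qualitative statement by the closed graph theorem on the Banach space of $C^2$ symbols normed by $\max_{k\leq2}\sup_\lambda\lambda^k|m^{(k)}(\lambda)|$.
\item In the duality step, $L^2$ convergence of $\sum_NP^\Omega_Ng$ suffices because $h\in L^2$. It holds by the spectral theorem, since $0$ is not an eigenvalue of $-\Delta_\Omega$. So the $L^p$ convergence from Lemma~\ref{lem:berns ests} is not needed.
\item $g=(-\Delta_\Omega)^{s/2}f\in L^2$ automatically, because $C_c^\infty(\Omega)\subset D((-\Delta_\Omega)^k)$ for every $k$. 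The duality bound over $h\in L^2\cap L^{p'}$ then forces $g\in L^p$, so the lower bound needs no a priori $L^p$ membership. For the upper bound the inequality is trivial if $g\notin L^p$.
\end{itemize}
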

	
	We invoke this estimate from the cited reference. In particular,
	$\dot B^s_{D,2,2}$ and $\dot H_D^s$ have equivalent norms in the
	range used here.
	
	\begin{proposition}[{\cite[Lemma~4.4]{KVZ16:IMRN Riesz tf}}]
		\label{prop:dense}
		Let $1<p<\infty$ and $0\leq s<1+\frac1p$. Then
		\begin{align*}
			\{(-\Delta_\Omega)^{s/2}f:f\in C_c^\infty(\Omega)\}
		\end{align*}
		is dense in $L^p(\Omega)$.
	\end{proposition}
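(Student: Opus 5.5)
The approach is constructive: approximate a given $h\in L^p(\Omega)$ by spectral cutoffs of the form $(-\Delta_\Omega)^{s/2}f_\varepsilon$, where $f_\varepsilon$ is not yet a test function. Then approximate each $f_\varepsilon$ by test functions in an inhomogeneous norm that dominates $\|(-\Delta_\Omega)^{s/2}\cdot\|_{L^p}$.

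\emph{Step 1 (spectral approximation).} For $\varepsilon\in(0,1)$ set
\begin{align*}
m_\varepsilon(\lambda)=e^{-\varepsilon\lambda^2}-e^{-\lambda^2/\varepsilon},
\qquad
f_\varepsilon=(-\Delta_\Omega)^{-s/2}m_\varepsilon(\sqrt{-\Delta_\Omega})h .
\end{align*}
Then $(-\Delta_\Omega)^{s/2}f_\varepsilon=e^{\varepsilon\Delta_\Omega}h-e^{\Delta_\Omega/\varepsilon}h$.
I will show that this converges to $h$ in $L^p$, using the Dirichlet heat kernel on $\Omega$ and its Gaussian upper bound (domination by the free heat kernel).
\begin{itemize}
\item As $t\to0$, $e^{t\Delta_\Omega}h\to h$ in $L^p$, by strong continuity of the heat semigroup on $L^p(\Omega)$.
\item As $t\to\infty$, $e^{t\Delta_\Omega}h\to 0$ in $L^p$. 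For $h\in L^1\cap L^p$ this follows from $\|e^{t\Delta_\Omega}h\|_p\lesssim t^{-\frac32(1-\frac1p)}\|h\|_1$. Since $p>1$, the general case follows by density and uniform $L^p$-boundedness of the semigroup.
\end{itemize}

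\emph{Step 2 ($f_\varepsilon$ lies in the inhomogeneous space).} Write $f_\varepsilon=(I-\Delta_\Omega)^{-s/2}\,n_\varepsilon(\sqrt{-\Delta_\Omega})h$ with
\begin{align*}
n_\varepsilon(\lambda)=(1+\lambda^2)^{s/2}\lambda^{-s}m_\varepsilon(\lambda).
\end{align*}
\begin{itemize}
\item Near $\lambda=0$ we have $m_\varepsilon(\lambda)=O_\varepsilon(\lambda^2)$, and $s<2$, so $n_\varepsilon$ is smooth and vanishes there.
\item At infinity $n_\varepsilon$ decays like a Gaussian.
\end{itemize}
Hence $n_\varepsilon$ satisfies the hypotheses of Theorem~\ref{thm:multiplier}, and $(I-\Delta_\Omega)^{s/2}f_\varepsilon\in L^p$. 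Moreover, $f_\varepsilon$ is in fact in the $L^p$ domain of every power of $I-\Delta_\Omega$. Conversely, the symbol $\lambda^s(1+\lambda^2)^{-s/2}$ is also a Mikhlin symbol, so
\begin{align*}
\|(-\Delta_\Omega)^{s/2}\varphi\|_p\lesssim\|(I-\Delta_\Omega)^{s/2}\varphi\|_p .
\end{align*}
It therefore suffices to approximate $f_\varepsilon$ by elements of $C_c^\infty(\Omega)$ in the $H_D^{s,p}$ graph norm.

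\emph{Step 3 (density of test functions; the main obstacle).} Here the restriction $s<1+\frac1p$ enters. The plan has two parts.

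First, identify $H_D^{s,p}$, i.e.\ the $L^p$ domain of $(I-\Delta_\Omega)^{s/2}$, with a classical space via elliptic regularity for the Dirichlet problem on the smooth exterior domain, plus complex interpolation:
\begin{itemize}
\item $H^{2,p}(\Omega)\cap H^{1,p}_0(\Omega)$ at $s=2$;
\item $H^{1,p}_0(\Omega)$ at $s=1$;
\item for $0\le s<1+\frac1p$, the closure of $C_c^\infty(\Omega)$ in $H^{s,p}(\Omega)$, with equivalent norms.
\end{itemize}

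Second, note that only the trace condition $f|_{\partial\Omega}=0$ is imposed in this range. The next boundary quantity, $\partial_\nu f$, has no meaning in $H^{s,p}$ when $s-1<\frac1p$. So $f_\varepsilon$, which vanishes on $\partial\Omega$ and belongs to $H^{s,p}$, is approximable by test functions. I would do this with a boundary collar cutoff $\chi(\operatorname{dist}(x,\partial\Omega)/\delta)$ together with Hardy's inequality $\|d^{-\sigma}f\|_p\lesssim\|f\|_{H^{\sigma,p}}$ for $\sigma<1+\frac1p$ and $f|_{\partial\Omega}=0$. This controls the commutator of the cutoff, and mollification and truncation at infinity handle the rest.

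The delicate point is this Hardy/cutoff estimate in the fractional range $1<s<1+\frac1p$. If needed, I would reduce to the half-space by localization and invoke the known characterization $H^{s,p}_0=\{f\in H^{s,p}: f|_{\partial}=0\}$ for $\frac1p<s<1+\frac1p$.

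Combining Steps 1--3 with a diagonal argument gives the density claimed in Proposition~\ref{prop:dense}.
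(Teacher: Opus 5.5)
The paper gives no proof of this statement. It is quoted from Killip--Visan--Zhang \cite[Lemma~4.4]{KVZ16:IMRN Riesz tf}, so your argument is an independent route, not a reconstruction.

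\textbf{What works.} The architecture is sound. Steps~1--2 correctly reduce density of the range to density of $C_c^\infty(\Omega)$ in the $L^p$-domain of a power of $I-\Delta_\Omega$, with its graph norm. You also locate correctly where $s<1+\frac1p$ enters: Seeley's description of $[L^p,W^{2,p}\cap W^{1,p}_0]_\theta$ imposes only the Dirichlet trace, and the closure of test functions adds a normal-derivative condition only above $1+\frac1p$.

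\textbf{How it compares with the citation.} Your route makes the role of the exponent $1+\frac1p$ transparent. It uses only Gaussian heat-kernel bounds, the multiplier theorem and classical elliptic theory. It is not more elementary, however. It rests on Seeley's interpolation theorem (localized to an unbounded exterior domain) and on the trace characterization of $H^{\sigma,p}_0$. It also needs bounded imaginary powers of $I-\Delta_\Omega$ on $L^p$, to identify fractional-power domains with complex interpolation spaces. These follow from Theorem~\ref{thm:multiplier} applied to $(1+\lambda^2)^{i\tau}$, with polynomial growth in $\tau$.

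\textbf{A minor correction in Step~2.} Near $0$, $n_\varepsilon(\lambda)\sim c_\varepsilon\lambda^{2-s}$, which is not smooth there. It does satisfy the symbol bounds of Theorem~\ref{thm:multiplier} because $s<2$, and that is all you use.

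\textbf{One false claim in Step~3.} At the exceptional index $s=\frac1p$, the domain of $(I-\Delta_\Omega)^{1/(2p)}$ is \emph{not} the closure of $C_c^\infty(\Omega)$ in $H^{1/p,p}(\Omega)$ with equivalent norms.
\begin{itemize}
\item That closure is all of $H^{1/p,p}(\Omega)$. It contains functions equal to $1$ near $\partial\Omega$, for which $\int_{1<|x|<2}|f|^p/(|x|-1)\,dx=\infty$.
\item On test functions, your comparison of homogeneous and inhomogeneous norms, combined with Proposition~\ref{prop:Hardy for D-Lap} (valid at $s=\frac1p$), gives $\|\operatorname{dist}(x,\Omega^c)^{-1/p}\varphi\|_p\lesssim\|(I-\Delta_\Omega)^{1/(2p)}\varphi\|_p$.
\end{itemize}
This is Seeley's exceptional case, where the interpolation space carries an extra Hardy term. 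So at $s=\frac1p$ your identification does not yield density in the norm you need. The restriction-norm Hardy inequality you mention as a fallback also fails at this index.

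\textbf{The repair is already in your hands.} Since $f_\varepsilon$ lies in the domain of every power of $I-\Delta_\Omega$, fix $\sigma\in(\frac1p,1+\frac1p)$ with $\sigma\geq s$. Approximate $f_\varepsilon$ by test functions in the graph norm of $(I-\Delta_\Omega)^{\sigma/2}$. Then use $\|(I-\Delta_\Omega)^{s/2}\varphi\|_p\lesssim\|(I-\Delta_\Omega)^{\sigma/2}\varphi\|_p$, which holds because $(I-\Delta_\Omega)^{-(\sigma-s)/2}$ is bounded on $L^p$. This uses only the non-exceptional range $\frac1p<\sigma<1+\frac1p$. There the domain is $\{f\in H^{\sigma,p}(\Omega):f|_{\partial\Omega}=0\}=H^{\sigma,p}_0(\Omega)$ with equivalent norms, and the argument closes. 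The boundary-collar and Hardy discussion then becomes unnecessary.
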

	
	\begin{remark}
		This is a density statement for the range of $(-\Delta_\Omega)^{s/2}$
		on test functions. It does not assert that the homogeneous space
		$\dot H_D^{s,p}(\Omega)$ is a subspace of $L^p(\Omega)$.
	\end{remark}
	
	\begin{definition}\label{def:Sob 0 extd}
		Let $\dot H_{00}^{s,p}(\Omega)$ and $H_{00}^{s,p}(\Omega)$ denote the
		completions of the zero extensions of $C_c^\infty(\Omega)$ in
		$\dot H^{s,p}(\R^3)$ and $H^{s,p}(\R^3)$, respectively.
	\end{definition}
	
	\begin{proposition}[Hardy inequality; \cite{KVZ16:IMRN Riesz tf}]
		\label{prop:Hardy for D-Lap}
		Let $1<p<\infty$ and $0<s<\min\{1+\frac1p,\frac3p\}$.
		For $f\in C_c^\infty(\Omega)$,
		\begin{align*}
			\left\|\frac{f}{\operatorname{dist}(x,\Omega^c)^s}\right\|_{L^p}
			\lesssim_{s,p}\|(-\Delta_\Omega)^{s/2}f\|_{L^p}.
		\end{align*}
	\end{proposition}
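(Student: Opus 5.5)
The plan is to follow the heat-kernel approach of \cite{KVZ16:IMRN Riesz tf} and reduce the estimate to a pointwise kernel bound plus a Schur-type test. The main input is the two-sided Dirichlet heat kernel bound for the exterior of a convex obstacle (Zhang's estimate, used as a black box):
\begin{align*}
	e^{t\Delta_\Omega}(x,y)\lesssim
	\Bigl(\frac{d(x)}{\sqrt t\wedge d(x)}\wedge1\Bigr)^{-1}\!\!
	\Bigl(\frac{d(x)}{d(x)+\sqrt t}\Bigr)
	\Bigl(\frac{d(y)}{d(y)+\sqrt t}\Bigr)
	t^{-3/2}e^{-c|x-y|^2/t},
\end{align*}
that is, the Gaussian bound with the additional boundary factors $\min\{1,d(x)/\sqrt t\}\min\{1,d(y)/\sqrt t\}$. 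Here $d(x)=\operatorname{dist}(x,\Omega^c)=|x|-1$.

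Given $f\in C_c^\infty(\Omega)$, set $g=(-\Delta_\Omega)^{s/2}f$. By the spectral theorem,
\begin{align*}
	f=(-\Delta_\Omega)^{-s/2}g=\frac1{\Gamma(s/2)}\int_0^\infty t^{s/2-1}e^{t\Delta_\Omega}g\,dt,
\end{align*}
which is justified for test functions since $0<s<3$. This gives
\begin{align*}
	\frac{|f(x)|}{d(x)^s}\le\int_\Omega K(x,y)|g(y)|\,dy,
	\qquad
	K(x,y)=d(x)^{-s}\int_0^\infty t^{s/2-1}e^{t\Delta_\Omega}(x,y)\,dt .
\end{align*}
Integrating the heat bound in $t$, discarding the $d(y)$ factor, and splitting at $t=d(x)^2$ yields
\begin{align*}
	K(x,y)\lesssim d(x)^{-s}|x-y|^{s-3}\ \ (|x-y|\le d(x)),\qquad
	K(x,y)\lesssim d(x)^{1-s}|x-y|^{s-4}\ \ (|x-y|\ge d(x)).
\end{align*}

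The $L^p$ boundedness of $g\mapsto\int K|g|$ is then checked region by region.

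\emph{Near-diagonal region $|x-y|\le d(x)/2$.} There $d(y)\approx d(x)$, and the operator is controlled by the Hardy--Littlewood maximal function: for each $x$, $\int_{|x-y|<d(x)}d(x)^{-s}|x-y|^{s-3}|g(y)|\,dy\lesssim Mg(x)$.

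\emph{Off-diagonal region $|x-y|\ge d(x)/2$.} Decompose according to whether $d(x)\lesssim1$ (near the sphere) or $d(x)\gtrsim1$ (where $d(x)\approx|x|$).
\begin{itemize}
	\item[(a)] Near the boundary, the kernel $d(x)^{1-s}|x-y|^{s-4}$ behaves like a one-dimensional Hardy operator in the normal variable, tensored with a tangential Riesz potential. A Schur test with weights $d(\cdot)^{-a}$ works exactly when $1-s>-1/p'$ fails to obstruct, i.e.\ when $s<1+\frac1p$.
	\item[(b)] Far away, the kernel is dominated by $|x|^{1-s}|x-y|^{s-4}\mathbf1_{|x-y|\gtrsim|x|}$, together with the near part bounded by $|x|^{-s}|x-y|^{s-3}$. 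This is the kernel of the classical Stein--Weiss/Hardy inequality $\||x|^{-s}(-\Delta)^{-s/2}g\|_{L^p(\R^3)}\lesssim\|g\|_p$, whose Schur test with power weights $|y|^{-\beta}$ succeeds precisely when $s<\frac3p$.
\end{itemize}
Combining the regions gives the estimate with constant depending on $s,p$.

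The main obstacle is the heat kernel bound with the sharp boundary factors, and in particular its uniformity for large $t$ in the exterior domain. With only the plain Gaussian bound, the factor $d(x)^{-s}$ cannot be compensated near $\partial\Omega$ once $s\ge\frac1p$. If a full proof were required rather than the citation, I would first try to derive the upper bound for the radial case directly from the half-line reduction $h(\rho)=(1+\rho)f(1+\rho)$ of Section~\ref{sect:DF transf}. There the Dirichlet heat kernel on the half-line is explicit,
\begin{align*}
	(4\pi t)^{-1/2}\bigl(e^{-(\rho-\sigma)^2/4t}-e^{-(\rho+\sigma)^2/4t}\bigr),
\end{align*}
and it visibly carries the factor $\min\{1,\rho\sigma/t\}$. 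This suffices for the radial applications in this paper.
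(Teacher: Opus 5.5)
\textbf{There is a genuine gap: the heat kernel bound you start from is false for this domain.} The paper gives no proof of its own; it imports the proposition from Killip--Visan--Zhang. Their argument is essentially the scheme you describe: the subordination formula $(-\Delta_\Omega)^{-s/2}=\Gamma(s/2)^{-1}\int_0^\infty t^{s/2-1}e^{t\Delta_\Omega}\,dt$, Zhang's heat kernel bounds, and pointwise kernel estimates.

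For the exterior of a compact convex obstacle in $\R^3$ (obstacle size normalized to $1$), Zhang's estimate is
\begin{align*}
	e^{t\Delta_\Omega}(x,y)\lesssim
	\Bigl(\frac{d(x)}{\sqrt t\wedge1}\wedge1\Bigr)
	\Bigl(\frac{d(y)}{\sqrt t\wedge1}\wedge1\Bigr)
	t^{-3/2}e^{-c|x-y|^2/t},
\end{align*}
and the reverse inequality holds with a different $c$. The boundary factors stop improving once $t\gtrsim1$; they saturate at $\min\{1,d(x)\}\min\{1,d(y)\}$. The factors $\min\{1,d(x)/\sqrt t\}$ for all $t$ that you use give the half-space bound, which fails here. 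Brownian motion in $\R^3$ is transient and escapes the unit ball from $x$ with probability $1-1/|x|=d(x)/|x|$, so the large-time decay is $t^{-3/2}$, not $t^{-5/2}$. (The first factor in your display is identically $1$; presumably $\operatorname{diam}\Omega^c$ was replaced by $d(x)$.)

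Your own radial suggestion refutes the bound. The sphere-averaged kernel is $\frac{1}{4\pi|x||y|}(4\pi t)^{-1/2}\bigl(e^{-(\rho-\sigma)^2/4t}-e^{-(\rho+\sigma)^2/4t}\bigr)$ with $\rho=d(x)$, $\sigma=d(y)$. For $t\gg|x|^2+|y|^2$ this is comparable to $\frac{d(x)d(y)}{|x||y|}t^{-3/2}$. The half-line factor $\rho\sigma/t$ is matched by the factor $|x||y|/t$ already present in the averaged free kernel, so it gives no extra decay in $t$.

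As a result, your off-diagonal bound $K(x,y)\lesssim d(x)^{1-s}|x-y|^{s-4}$ is false for $|x-y|\gtrsim1$. There the range $t\geq1$ contributes $d(x)^{-s}\min\{1,d(x)\}|x-y|^{s-3}$, and Zhang's lower bound shows this is attained when $d(y)\gtrsim1$. The correct estimates are
\begin{align*}
	K(x,y)&\lesssim d(x)^{1-s}\bigl(|x-y|^{s-4}\mathbf1_{d(x)\leq|x-y|\leq1}
	+|x-y|^{s-3}\mathbf1_{|x-y|\geq1}\bigr),\qquad d(x)\leq1,\\
	K(x,y)&\lesssim d(x)^{-s}|x-y|^{s-3},\qquad d(x)\geq1.
\end{align*}
Your region (b) survives, since there you dominate by the Stein--Weiss kernel $|x|^{-s}|x-y|^{s-3}$ anyway. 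In region (a), however, the long-range tail $d(x)^{1-s}|x-y|^{s-3}\mathbf1_{|x-y|\geq1}$ is missing from your analysis, and $s<1+\frac1p$ alone does not handle it. By H\"older it is bounded on $L^p$ because
\begin{itemize}
	\item $|z|^{s-3}\mathbf1_{|z|\geq1}\in L^{p'}(\R^3)$ exactly when $s<\frac3p$, and
	\item $d(x)^{1-s}\mathbf1_{d(x)\leq1}\in L^p(\Omega)$ exactly when $s<1+\frac1p$.
\end{itemize}
So the condition $s<\frac3p$ also enters near the boundary. With Zhang's actual bound and this extra term, your scheme closes.

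Two smaller points. The boundary integrability condition is $(1-s)p>-1$, not $1-s>-1/p'$. A derivation from the half-line kernel would only cover radial $f$, while the proposition is stated for all $f\in C_c^\infty(\Omega)$.
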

	
	\begin{remark}
		The restrictions in this inequality reflect both the behavior near
		the boundary and the behavior at infinity; see
		\cite{KVZ16:IMRN Riesz tf} for sharpness and counterexamples.
	\end{remark}
	
	\begin{theorem}[Sobolev norm equivalence; \cite{KVZ16:IMRN Riesz tf}]
		\label{thm:Sob eq}
		Let $1<p<\infty$ and $0\leq s<\min\{1+\frac1p,\frac3p\}$.
		For $f\in C_c^\infty(\Omega)$ and its zero extension $\widetilde f$,
		\begin{align*}
			\|(-\Delta_\Omega)^{s/2}f\|_{L^p(\Omega)}
			\approx_{s,p}
			\|(-\Delta_{\R^3})^{s/2}\widetilde f\|_{L^p(\R^3)}.
		\end{align*}
		Thus $\dot H_D^{s,p}(\Omega)$ and $\dot H_{00}^{s,p}(\Omega)$
		are identified by zero extension, with equivalent norms.
	\end{theorem}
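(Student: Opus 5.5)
The plan is to compare the two fractional powers through their heat semigroups and to control the difference by Hardy-type weighted norms, which are in turn bounded by either side of the claimed equivalence. This is the strategy of \cite{KVZ16:IMRN Riesz tf}. The case $s=0$ is trivial, so take $0<s<\min\{1+\frac1p,\frac3p\}$; in particular $s<2$. For $f\in C_c^\infty(\Omega)$ with zero extension $\widetilde f$, write both operators with the Balakrishnan-type formula
\begin{align*}
(-\Delta_\Omega)^{s/2}f-\bigl[(-\Delta_{\R^3})^{s/2}\widetilde f\bigr]\big|_\Omega
=c_s\int_0^\infty t^{-1-s/2}\bigl(e^{t\Delta_{\R^3}}\widetilde f-e^{t\Delta_\Omega}f\bigr)\,dt .
\end{align*}
This is valid since $0<s<2$ and $f$ is smooth with compact support. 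The task is then to show that this difference, together with the part of $(-\Delta_{\R^3})^{s/2}\widetilde f$ living on $\Omega^c$, is bounded in $L^p$ by $\|f/\operatorname{dist}(x,\Omega^c)^s\|_{L^p}$.

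The key input is a pointwise bound on the heat kernel difference $0\le e^{t\Delta_{\R^3}}(x,y)-e^{t\Delta_\Omega}(x,y)$ for $x,y\in\Omega$. For the exterior of a convex obstacle one has Gaussian upper bounds for the Dirichlet kernel with the boundary factor
\begin{align*}
\min\Bigl\{1,\tfrac{d(x)}{\sqrt t\wedge1}\Bigr\},\qquad d(x)=\operatorname{dist}(x,\Omega^c),
\end{align*}
and similarly in $y$. These should give the difference bound
\begin{align*}
\bigl|e^{t\Delta_{\R^3}}(x,y)-e^{t\Delta_\Omega}(x,y)\bigr|
\lesssim t^{-3/2}e^{-c|x-y|^2/t}\Bigl(\mathbf 1_{\sqrt t\gtrsim d(x)}+\mathbf 1_{\sqrt t\gtrsim d(y)}\Bigr).
\end{align*}
For large $t$ one should also gain the decay coming from the compact obstacle. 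Inserting this into the time integral and integrating in $t$ gives a kernel dominated by
\begin{align*}
|x-y|^{-3-s}\mathbf 1_{|x-y|\gtrsim d(x)\wedge d(y)}\;+\;\text{a local piece},
\end{align*}
whose action on $f$ is bounded pointwise by maximal-function and Schur-test arguments in terms of $f/d^s$. The same estimate handles $(-\Delta_{\R^3})^{s/2}\widetilde f$ on $\Omega^c$, where $\widetilde f=0$.

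One then closes with two Hardy inequalities. Proposition~\ref{prop:Hardy for D-Lap} bounds $\|f/d^s\|_{L^p}$ by the Dirichlet side, giving $\lesssim$ in the direction from $\Omega$ to $\R^3$. For the reverse direction one needs $\|\widetilde f/d^s\|_{L^p}\lesssim\|(-\Delta_{\R^3})^{s/2}\widetilde f\|_{L^p}$. This holds in the stated range by splitting into two regions. Near the boundary sphere, after flattening, it reduces to the one-dimensional fractional Hardy inequality for functions vanishing on one side, which needs $s<1+\frac1p$ for functions in $\dot H^{s,p}_{00}$. Far away, $d(x)\approx|x|$, and it reduces to the classical Euclidean Hardy inequality $\|\,|x|^{-s}g\|_{L^p}\lesssim\|(-\Delta)^{s/2}g\|_{L^p}$, which needs $s<\frac3p$. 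Combining the difference estimate with both Hardy inequalities yields the two-sided equivalence on $C_c^\infty(\Omega)$. The identification $\dot H_D^{s,p}\cong\dot H_{00}^{s,p}$ then follows by density, since both spaces are completions of the same test class.

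The main obstacle is the sharp heat kernel difference estimate near the boundary for $s$ close to $1+\frac1p$. There the crude Gaussian bound loses the factor needed to integrate $t^{-1-s/2}$ against the short-time region, so one must use the precise boundary behaviour of the Dirichlet kernel, namely vanishing to first order at $\partial\Omega$. The first thing to try is the explicit radial structure: the half-line reduction $h(\rho)=(1+\rho)f(1+\rho)$ and the method of images give the radial kernel in closed form. The nonradial case is treated as in \cite{KVZ16:IMRN Riesz tf}.
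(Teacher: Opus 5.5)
The paper contains no proof of this statement. It is quoted from Killip--Visan--Zhang and used only as a black box, in the trace remark. Judged on its own, your architecture is sound: Balakrishnan's formula for $0<s<2$, control of the difference of the two fractional powers (and of $(-\Delta_{\R^3})^{s/2}\widetilde f$ on $\Omega^c$) by $\|f/d^s\|_{L^p}$, then a Hardy inequality on each side. However, two load-bearing steps are wrong or missing as written.

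\textbf{The heat-kernel difference bound.} With indicators it is false, since $e^{t\Delta_{\R^3}}(x,y)-e^{t\Delta_\Omega}(x,y)>0$ for every $t>0$. It also cannot be derived from the Gaussian upper bound with boundary factor: an upper bound on $e^{t\Delta_\Omega}$ bounds the difference from below, not from above. The correct input is the first-hitting decomposition
\[
e^{t\Delta_{\R^3}}(x,y)-e^{t\Delta_\Omega}(x,y)=\mathbb E_x\bigl[e^{(t-\tau)\Delta_{\R^3}}(B_\tau,y)\mathbf 1_{\{\tau<t\}}\bigr],
\]
with $B$ the Brownian motion generated by $\Delta$ and $\tau$ its hitting time of $\partial\Omega$. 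For a half-space this is exactly the reflected Gaussian. It gives
\[
0\le e^{t\Delta_{\R^3}}(x,y)-e^{t\Delta_\Omega}(x,y)\lesssim t^{-3/2}e^{-c(|x-y|^2+d(x)^2+d(y)^2)/t},
\]
hence the kernel $(|x-y|+d(x)+d(y))^{-3-s}$ after the $t$-integration. Neither a large-time gain nor radial symmetry is needed.

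Its weighted $L^p$ bound is not a plain Schur test: every dyadic shell around $x$ contributes a fixed amount, so the row integrals diverge. It is a homogeneous-kernel estimate instead. Near the sphere, in $a=d(x)$, $b=d(y)$, it reduces to $(a+b)^{-1-s}b^s$ on $L^p(\R_+)$. At infinity it reduces to $(|x|+|y|)^{-3-s}|y|^s$ on $L^p(\R^3)$. Both are bounded for all $1<p<\infty$ and $0<s<2$. So this step never sees $s=1+\frac1p$, and your ``main obstacle'' is misplaced. The first-order vanishing of the Dirichlet kernel is what drives Proposition~\ref{prop:Hardy for D-Lap}, and that is where the threshold $1+\frac1p$ enters.

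\textbf{The reverse inequality.} The bound $\|(-\Delta_\Omega)^{s/2}f\|_{L^p}\lesssim\|(-\Delta_{\R^3})^{s/2}\widetilde f\|_{L^p}$ rests entirely on the Euclidean Hardy inequality $\|\widetilde f/d^s\|_{L^p}\lesssim\|(-\Delta_{\R^3})^{s/2}\widetilde f\|_{L^p}$ for $f\in C_c^\infty(\Omega)$. For $s\ge\frac1p$ you only assert it. ``Flatten and reduce to one dimension'' hides three things: localization of homogeneous norms, invariance under the flattening map, and a fibrewise property of $H^{s,p}=F^s_{p,2}$ when $p\ne2$. The Gagliardo-seminorm argument that makes the one-dimensional inequality easy controls $W^{s,p}=B^s_{p,p}$, which $H^{s,p}$ dominates only for $p\ge2$.

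A direct route avoids all of this. Set $G=(-\Delta_{\R^3})^{s/2}\widetilde f$, so that $\widetilde f=\kappa_s|\cdot|^{s-3}*G$ and $\widetilde f(x/|x|^2)=0$ for $|x|>1$. The identity $|x|^2\,\bigl|x/|x|^2-y\bigr|^2=|x-y|^2+(|x|^2-1)(|y|^2-1)$ then gives
\[
f(x)=\kappa_s\int_{\R^3}\Bigl(|x-y|^{s-3}-\bigl(|x-y|^2+(|x|^2-1)(|y|^2-1)\bigr)^{\frac{s-3}{2}}\Bigr)G(y)\,dy .
\]
Near the sphere, the first-order cancellation, after multiplication by $d(x)^{-s}$, produces a kernel bounded on $L^p$ for $s<\min\{1+\frac1p,\frac3p\}$. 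For $|x|\ge2$ one uses the Stein--Weiss inequality $\||x|^{-s}g\|_{L^p}\lesssim\|(-\Delta)^{s/2}g\|_{L^p}$, valid for $s<\frac3p$. Without some such argument the reverse inequality is not proved.
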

	
	\begin{remark}\label{rem:Frac prod}
		Theorem~\ref{thm:Sob eq} allows the transfer of Euclidean fractional
		product estimates to the Dirichlet setting, provided all Sobolev
		exponents lie in its range; see \cite{KVZ16:IMRN Riesz tf}.
		For the nonlinear estimates below we instead use the radial $L^4$
		embeddings and their duals, so no fractional product estimate is needed.
	\end{remark}
	
	\subsection{Radial Sobolev embeddings}
	
	The qualitative embeddings below can also be obtained from the radial
	weighted fractional integration inequality of
	\cite{DeNapoliDD:radial HLS} and Theorem~\ref{thm:Sob eq}.
	We include a proof in the distorted Fourier representation to separate
	the low and high spectral frequencies and to obtain the quantitative
	estimate used in the energy argument.
	
	\begin{proposition}[Radial Sobolev embeddings]\label{prop:radial Sob embedding}
		Let $\Omega=\R^3\backslash\overline{B(0,1)}$, $2\leq p<\infty$, and
		\begin{align}\label{cond:radial Sob embedding}
			\frac12-\frac1p\leq s\leq\frac32-\frac3p.
		\end{align}
		Then, for every $f\in\dot H^s_{D,rad}(\Omega)$,
		\begin{align}\label{est:radial Sob embedding}
			\|f\|_{L^p(\Omega)}
			\lesssim_{s,p}\|f\|_{\dot H^s_{D,rad}(\Omega)}.
		\end{align}
		In particular,
		\begin{align}\label{est:radial L4 embedding}
			\|f\|_{L^4(\Omega)}
			\lesssim_s\|f\|_{\dot H^s_{D,rad}(\Omega)},
			\qquad \frac14\leq s\leq\frac34.
		\end{align}
	\end{proposition}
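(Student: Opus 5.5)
By density it suffices to prove \eqref{est:radial Sob embedding} for $f\in C_{c,rad}^\infty(\Omega)$. We work in the half-line variables: write $g=\mathcal F_Df$ and $h(\rho)=(1+\rho)f(1+\rho)$, so that $h$ is the sine transform of $g$. Then
\begin{align*}
\|f\|_{L^p}^p=4\pi\int_0^\infty |h(\rho)|^p(1+\rho)^{2-p}\,d\rho,
\qquad
\|f\|_{\dot H^s_D}^2=4\pi\int_0^\infty\lambda^{2s}|g(\lambda)|^2\,d\lambda .
\end{align*}
Extending $h$ oddly to $\R$, its Euclidean Fourier transform is $-i\operatorname{sgn}(\xi)g(|\xi|)$, up to normalization. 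Hence $\|f\|_{\dot H^s_D}\approx\|h\|_{\dot H^s(\R)}$, and the claim becomes a one-dimensional weighted inequality
\begin{align*}
\int_0^\infty |h(\rho)|^p(1+\rho)^{2-p}\,d\rho\lesssim\|h\|_{\dot H^s(\R)}^p .
\end{align*}
The weight $(1+\rho)^{2-p}$ is $\approx1$ near the boundary and behaves like $\rho^{2-p}$ at infinity. This explains the two endpoints of \eqref{cond:radial Sob embedding}: the one-dimensional Sobolev exponent $s=\frac12-\frac1p$ governs the region near the boundary, and the three-dimensional exponent $s=\frac32-\frac3p$ governs the behavior at infinity.

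We split $h=h_{\mathrm{lo}}+h_{\mathrm{hi}}$ with $g=g\mathbf 1_{\lambda\le1}+g\mathbf 1_{\lambda>1}$. Since the weight is $\le1$, the high part is handled by the unweighted one-dimensional Sobolev embedding $\dot H^{\sigma}(\R)\hookrightarrow L^p(\R)$ with $\sigma=\frac12-\frac1p$. Because the frequencies satisfy $\lambda>1$, we have
\begin{align*}
\|h_{\mathrm{hi}}\|_{\dot H^\sigma}\le\|h_{\mathrm{hi}}\|_{\dot H^s}\qquad\text{whenever } s\ge\sigma .
\end{align*}
For the low part we use the pointwise bounds $|\sin(\lambda\rho)|\le\min(1,\lambda\rho)$ and Cauchy--Schwarz against $\lambda^{2s}\,d\lambda$ on dyadic shells:
\begin{align*}
|h_{\mathrm{lo}}(\rho)|\lesssim\sum_{N\le1}\min(1,N\rho)\,N^{1/2-s}\,\|\lambda^s g\|_{L^2(\lambda\approx N)} .
\end{align*}
When $s<\frac12$ this is summable over $N\le1$ up to a factor $\rho^{s-1/2}$ for large $\rho$ (split at $N\approx\rho^{-1}$). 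When $s\ge\frac12$ the sum is bounded by $\min(\rho,1)\cdot$ and a gain for $N\rho\ge1$ in the case $s<\frac32$. Rather than pointwise bounds, a cleaner route is to estimate the low part dyadically in the weighted $L^p$ norm. For a single shell $N\le1$, Bernstein in one dimension together with the bound $\min(1,N\rho)$ gives
\begin{align*}
\|h_N(1+\rho)^{2/p-1}\|_{L^p}\lesssim N^{\frac32-\frac3p-s}\,\|\lambda^s g\|_{L^2(\lambda\approx N)} .
\end{align*}
The $L^p$ norm of $h_N$ on $\rho\lesssim N^{-1}$ costs $N^{1/2-1/p}$ times $\|g_N\|_{L^2}$, the weight there is about $N^{1-2/p}$ on its bulk, and $\|g_N\|_{L^2}=N^{-s}\|\lambda^s g_N\|$.

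The main obstacle is summing these dyadic pieces when the exponent $\frac32-\frac3p-s$ vanishes, that is, at the endpoint $s=\frac32-\frac3p$, where there is no geometric decay in $N$. In that case we would estimate $h_{\mathrm{lo}}$ directly instead of shell by shell. We would work in the region $\rho\gtrsim1$ with the weight $\rho^{2-p}$ and use a weighted fractional integration bound, viewing $f_{\mathrm{lo}}=\mathcal F_D^{-1}(g\mathbf 1_{\lambda\le 1})$ as a radial function essentially in $\dot H^s(\R^3)$. The bound then follows from the Euclidean Sobolev embedding $\dot H^{3/2-3/p}(\R^3)\hookrightarrow L^p$, via Theorem~\ref{thm:Sob eq}, which applies because $s<\frac3p$ when $p>2$, after restricting to $|x|\ge2$, say. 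Alternatively, we would use orthogonality and Littlewood--Paley (Proposition~\ref{prop:squ funt}) with $p\ge2$: the $\ell^2$ sum of the shell bounds gives $\ell^2\subset\ell^p$ summation, and pieces at different scales essentially live on disjoint regions $\rho\approx N^{-1}$. At the lower endpoint $s=\frac12-\frac1p$ with $p$ large, the low part is harmless since $\frac32-\frac3p-s>0$. Finally, specializing to $p=4$ gives \eqref{est:radial L4 embedding}.
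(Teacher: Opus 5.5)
Your high-frequency step is the same as the paper's: odd extension, one-dimensional Sobolev at $\alpha=\frac12-\frac1p$, and the weight bound $(1+\rho)^{2-p}\le1$. Your low-frequency shell bound is also correct. On $\rho\le N^{-1}$ use $|h_N(\rho)|\lesssim\min(1,N\rho)N^{1/2}\|g_N\|_{L^2}$, and on $\rho\ge N^{-1}$ use Plancherel, $\|h_N\|_{L^2(0,\infty)}=\|g_N\|_{L^2}$. Together these show that the $L^p(\Omega)$ norm of the $N$-th piece is $\lesssim N^{\frac32-\frac3p-s}\|\lambda^sg\|_{L^2(\lambda\approx N)}$, which sums for $s<\frac32-\frac3p$.

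The gap is the upper endpoint $s=\frac32-\frac3p$ of \eqref{cond:radial Sob embedding}. The statement includes it, and the paper needs it ($p=4$, $s=\frac34$). You leave it to two hedged sketches, and neither is correctly justified.

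In the first sketch, you say Theorem~\ref{thm:Sob eq} applies ``because $s<\frac3p$''. But $\frac32-\frac3p<\frac3p$ only when $p<4$, so your check fails exactly at $p=4$, $s=\frac34$. The correct check is that you are using Theorem~\ref{thm:Sob eq} with Lebesgue exponent $2$, where its range is $0\le s<\frac32$. With that, the endpoint is immediate for every $f\in C_c^\infty(\Omega)$: $\|f\|_{L^p(\Omega)}=\|\widetilde f\|_{L^p(\R^3)}\lesssim\|\widetilde f\|_{\dot H^{3/2-3/p}(\R^3)}\approx\|f\|_{\dot H_D^{3/2-3/p}}$. This needs no splitting, no restriction to $|x|\ge2$, and no radial symmetry.

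In the second sketch, the shells do not live on disjoint regions. Each $h_N$ generically extends over all of $\rho\lesssim N^{-1}$ (it is linear near $\rho=0$), so $\ell^2\subset\ell^p$ cannot be applied to $\sum_N h_N$. What does work is Proposition~\ref{prop:squ funt} with $s=0$ followed by Minkowski's inequality in $L^{p/2}$. This gives $\|f\|_{L^p}\lesssim(\sum_N\|P^\Omega_Nf\|_{L^p}^2)^{1/2}$ for $p\ge2$. Your shell bounds for $N\le1$, and the one-dimensional ones for $N>1$, then sum in $\ell^2$ with no loss at the endpoint.

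Either repair completes your argument, but both rely on the Killip--Visan--Zhang theory. The paper needs neither that theory nor any dyadic summation. It reads the half-line variable $\rho$ as a genuine radius in $\R^3$ and sets $F_{\le1}(x)=h_{\le1}(|x|)/|x|$. The three-dimensional radial sine-transform representation gives $\|F_{\le1}\|_{\dot H^\sigma(\R^3)}^2=4\pi\int_0^\infty\lambda^{2\sigma}|g_{\le1}(\lambda)|^2\,d\lambda$. The bound $(1+\rho)^{2-p}\le\rho^{2-p}$ for $p\ge2$ gives $\|P^\Omega_{\le1}f\|_{L^p(\Omega)}\le\|F_{\le1}\|_{L^p(\R^3)}$. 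The Euclidean Sobolev inequality at $\sigma=\frac32-\frac3p$, together with the support of $g_{\le1}$ in $[0,2]$, then handles every $s\le\sigma$ at once, endpoint included. Your dyadic picture explains well why the lower and upper endpoints are one- and three-dimensional. In the paper's proof, however, it is this reconstruction that makes the upper endpoint free.
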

	
	\begin{proof}
		We first suppose that $f\in C^\infty_{c,rad}(\Omega)$.
		Recall that
		\begin{align*}
			e_\lambda(r)&=\frac{\sin\lambda(r-1)}r,\\
			\mathcal F_D f(\lambda)
			&=\sqrt{\frac2\pi}\int_1^\infty e_\lambda(r)f(r)r^2\,dr,\\
			\mathcal F_D^{-1}g(r)
			&=\sqrt{\frac2\pi}\frac1r
			\int_0^\infty\sin\lambda(r-1)g(\lambda)\,d\lambda.
		\end{align*}
		The Plancherel identity and the spectral functional calculus give
		\eqref{eq:radial spectral Sob norm}.
		Here and below, the $L^p(\Omega)$ norms use the standard measure $dx$.
		Using the cutoff $\phi$ from \eqref{def:phi}, decompose
		\begin{align*}
			f=P^\Omega_{\leq1}f+P^\Omega_{>1}f.
		\end{align*}
		
		\noindent\textit{Low frequencies.}
		Set
		\begin{align*}
			g_{\leq1}(\lambda)&=\phi(\lambda)\mathcal F_Df(\lambda),\\
			h_{\leq1}(\rho)&=\sqrt{\frac2\pi}
			\int_0^\infty\sin(\lambda\rho)g_{\leq1}(\lambda)\,d\lambda,
			\qquad \rho>0.
		\end{align*}
		The inverse distorted Fourier transform gives
		\begin{align*}
			P^\Omega_{\leq1}f(1+\rho)=\frac{h_{\leq1}(\rho)}{1+\rho}.
		\end{align*}
		Define an auxiliary radial function on $\R^3$ by
		\begin{align*}
			F_{\leq1}(x)=\frac{h_{\leq1}(|x|)}{|x|}.
		\end{align*}
		This is a shifted radial reconstruction, not the zero extension of
		$P^\Omega_{\leq1}f$.
		Since $p\geq2$, we have $(1+\rho)^{2-p}\leq\rho^{2-p}$, and hence
		\begin{align}\label{est:low radial comparison}
			\|P^\Omega_{\leq1}f\|_{L^p(\Omega)}^p
			&=4\pi\int_0^\infty|h_{\leq1}(\rho)|^p
			(1+\rho)^{2-p}\,d\rho\nonumber\\
			&\leq4\pi\int_0^\infty|h_{\leq1}(\rho)|^p
			\rho^{2-p}\,d\rho
			=\|F_{\leq1}\|_{L^p(\R^3)}^p.
		\end{align}
		Let $\sigma=\frac32-\frac3p$.
		The usual three-dimensional Sobolev inequality yields
		\begin{align*}
			\|F_{\leq1}\|_{L^p(\R^3)}
			\lesssim_p\|F_{\leq1}\|_{\dot H^\sigma(\R^3)}.
		\end{align*}
		For $p=2$, this is simply the $L^2$ identity.
		The radial sine-transform representation of the whole-space Laplacian gives
		\begin{align*}
			\|F_{\leq1}\|_{\dot H^\sigma(\R^3)}^2
			=4\pi\int_0^\infty\lambda^{2\sigma}
			|g_{\leq1}(\lambda)|^2\,d\lambda.
		\end{align*}
		Since $g_{\leq1}$ is supported in $[0,2]$, the condition $s\leq\sigma$ implies
		\begin{align*}
			\|F_{\leq1}\|_{\dot H^\sigma(\R^3)}^2
			&\leq4\pi\,2^{2(\sigma-s)}
			\int_0^\infty\lambda^{2s}|\mathcal F_Df(\lambda)|^2\,d\lambda\\
			&\lesssim_{s,p}\|f\|_{\dot H_D^s(\Omega)}^2.
		\end{align*}
		Consequently,
		\begin{align}\label{est:low radial Sob}
			\|P^\Omega_{\leq1}f\|_{L^p(\Omega)}
			\lesssim_{s,p}\|f\|_{\dot H_D^s(\Omega)},
			\qquad s\leq\frac32-\frac3p.
		\end{align}
		
		\noindent\textit{High frequencies.}
		Set
		\begin{align*}
			g_{>1}(\lambda)&=(1-\phi(\lambda))\mathcal F_Df(\lambda),\\
			h_{>1}(\rho)&=\sqrt{\frac2\pi}
			\int_0^\infty\sin(\lambda\rho)g_{>1}(\lambda)\,d\lambda.
		\end{align*}
		Then
		\begin{align}\label{est:high radial comparison}
			\|P^\Omega_{>1}f\|_{L^p(\Omega)}^p
			&=4\pi\int_0^\infty|h_{>1}(\rho)|^p
			(1+\rho)^{2-p}\,d\rho\nonumber\\
			&\leq4\pi\|h_{>1}\|_{L^p(0,\infty)}^p.
		\end{align}
		Let $H_{>1}$ be the odd extension of $h_{>1}$ to $\R$:
		\begin{align*}
			H_{>1}(\rho)=
			\begin{cases}
				h_{>1}(\rho),&\rho>0,\\
				-h_{>1}(-\rho),&\rho<0,\\
				0,&\rho=0.
			\end{cases}
		\end{align*}
		With the Fourier convention fixed above, sine-transform inversion
		gives
		\begin{align*}
			\widehat H_{>1}(\xi)
			=-i\operatorname{sgn}(\xi)g_{>1}(|\xi|)
		\end{align*}
		for almost every $\xi\ne0$. Let $\alpha=\frac12-\frac1p$.
		The one-dimensional Sobolev inequality therefore gives
		\begin{align*}
			\|h_{>1}\|_{L^p(0,\infty)}
			&\lesssim_p\|H_{>1}\|_{\dot H^\alpha(\R)}\\
			&\lesssim_p\left(\int_1^\infty
			\lambda^{2\alpha}|g_{>1}(\lambda)|^2\,d\lambda\right)^{1/2}.
		\end{align*}
		For $p=2$, use the Plancherel identity instead.
		Since $g_{>1}$ vanishes on $[0,1]$, if $s\geq\alpha$, then
		\begin{align*}
			\int_1^\infty\lambda^{2\alpha}|g_{>1}(\lambda)|^2\,d\lambda
			\leq\int_0^\infty\lambda^{2s}|\mathcal F_Df(\lambda)|^2\,d\lambda.
		\end{align*}
		Thus
		\begin{align}\label{est:high radial Sob}
			\|P^\Omega_{>1}f\|_{L^p(\Omega)}
			\lesssim_{s,p}\|f\|_{\dot H_D^s(\Omega)},
			\qquad s\geq\frac12-\frac1p.
		\end{align}
		Combining \eqref{est:low radial Sob} and \eqref{est:high radial Sob}
		proves \eqref{est:radial Sob embedding} for smooth compactly supported radial functions.
		
		Finally, Definition~\ref{def:Sob DB} defines the Dirichlet Sobolev spaces
		by completion. Radial approximants may be obtained by rotation averaging,
		which commutes with the Dirichlet functional calculus and is contractive
		in the $L^2$-based spectral norm.
		For $f\in\dot H^s_{D,rad}(\Omega)$, choose
		$f_n\in C^\infty_{c,rad}(\Omega)$ with
		$\|f_n-f\|_{\dot H_D^s}\to0$.
		The established estimate implies
		\begin{align*}
			\|f_n-f_m\|_{L^p(\Omega)}
			\lesssim_{s,p}\|f_n-f_m\|_{\dot H_D^s(\Omega)}\longrightarrow0.
		\end{align*}
		Their $L^p$ limit defines the continuous embedding of the completion,
		and the same estimate follows by passage to the limit.
		Taking $p=4$ gives \eqref{est:radial L4 embedding}, including both endpoints.
	\end{proof}
	
	The high-frequency estimate in the following proposition is an
	instance of one-dimensional critical Sobolev growth; compare
	\cite{IulaMM:fractional MT,Ozawa:critical Sob}.
	We prove the required exterior-domain estimate directly, including
	the low-frequency contribution.
	
	\begin{proposition}[Quantitative critical embedding]\label{prop:critical sqrt p}
		There exists $C>0$, independent of $p$, such that, for every
		$f\in\dot H^{1/2}_{D,rad}(\Omega)$,
		\begin{align}\label{est:critical sqrt p}
			\|f\|_{L^p(\Omega)}
			\leq C\sqrt p\,\|f\|_{\dot H^{1/2}_{D,rad}(\Omega)},
			\qquad 4\leq p<\infty.
		\end{align}
	\end{proposition}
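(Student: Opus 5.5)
We follow the low/high splitting of the proof of Proposition~\ref{prop:radial Sob embedding}, now tracking the dependence on $p$. By density and the completion argument used there, it suffices to take $f\in C^\infty_{c,rad}(\Omega)$. Write $f=P^\Omega_{\leq1}f+P^\Omega_{>1}f$, with $g_{\leq1},h_{\leq1},g_{>1},h_{>1}$ as before, and set $A^2=\int_0^\infty\lambda|\mathcal F_Df(\lambda)|^2\,d\lambda$, so that $\|f\|_{\dot H_D^{1/2}}^2=4\pi A^2$.

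\emph{Low frequencies.} Here we avoid the $p$-dependent three-dimensional Sobolev constant and bound $h_{\leq1}$ directly from the sine kernel. We use the elementary bound $|\sin(\lambda\rho)|\leq\min\{1,\lambda\rho\}$. For $\rho\leq1$, Cauchy--Schwarz gives
\begin{align*}
|h_{\leq1}(\rho)|\lesssim\rho\int_0^2\lambda|g_{\leq1}|\,d\lambda
\lesssim\rho\Bigl(\int_0^2\lambda\,d\lambda\Bigr)^{1/2}A\lesssim\rho A.
\end{align*}
For $\rho\geq1$, we split at $\lambda=1/\rho$:
\begin{align*}
|h_{\leq1}(\rho)|\lesssim\rho\int_0^{1/\rho}\lambda|g_{\leq1}|\,d\lambda+\int_{1/\rho}^2|g_{\leq1}|\,d\lambda.
\end{align*}
The first term is $\lesssim\rho(\int_0^{1/\rho}\lambda\,d\lambda)^{1/2}A\lesssim A$. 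The second is $\leq(\int_{1/\rho}^2\lambda^{-1}\,d\lambda)^{1/2}A\lesssim(1+\log\rho)^{1/2}A$. Hence
\begin{align*}
\|P^\Omega_{\leq1}f\|_p^p=4\pi\int_0^\infty|h_{\leq1}|^p(1+\rho)^{2-p}\,d\rho
\lesssim A^p\Bigl(1+\int_1^\infty(C(1+\log\rho))^{p/2}\rho^{2-p}\,d\rho\Bigr).
\end{align*}
Substituting $\rho=e^t$ turns the last integral into $\int_0^\infty C^{p/2}(1+t)^{p/2}e^{-(p-3)t}\,dt$. For $p\geq4$ this is at most $C^{p}\,\Gamma(p/2+1)\,(p-3)^{-p/2-1}e^{p-3}\lesssim C^p$. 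Taking $p$-th roots, the low-frequency part is bounded uniformly in $p$. In fact the cruder bound $|h_{\leq1}|\lesssim(1+\log(1+\rho))^{1/2}A$ already suffices.

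\emph{High frequencies.} By \eqref{est:high radial comparison}, $\|P^\Omega_{>1}f\|_p\leq(4\pi)^{1/p}\|H_{>1}\|_{L^p(\R)}$, and $\|H_{>1}\|_{\dot H^{1/2}(\R)}^2=2\int_1^\infty\lambda|g_{>1}|^2\,d\lambda\lesssim A^2$. Moreover $\widehat H_{>1}$ vanishes for $|\xi|<1$, so $\|H_{>1}\|_{L^2}\lesssim A$ as well. It therefore suffices to prove the one-dimensional bound
\begin{align*}
\|F\|_{L^p(\R)}\leq C\sqrt p\,\|F\|_{H^{1/2}(\R)},\qquad p\geq4.
\end{align*}
To do this we decompose $F=\sum_{k\geq0}F_k$ with Littlewood--Paley pieces at frequency $2^k$. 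Bernstein and interpolation give $\|F_k\|_{L^p}\lesssim2^{k(1/2-1/p)}\|F_k\|_{L^2}$, but summing these bounds directly loses too much. Instead we use the standard Ozawa-type argument: split $\widehat F=\widehat F\mathbf1_{|\xi|\leq M}+\widehat F\mathbf1_{|\xi|>M}$. For the low part,
\begin{align*}
\|F_{\leq M}\|_\infty\lesssim\int_{|\xi|\le M}|\widehat F|\,d\xi\lesssim(\log M)^{1/2}\|F\|_{H^{1/2}}.
\end{align*}
For the high part, $\|F_{>M}\|_{L^2}\lesssim M^{-1/2}\|F\|_{H^{1/2}}$. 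Combining these gives the distribution-function bound $|\{|F|>\mu\}|\lesssim e^{-c\mu^2/\|F\|^2}$, after choosing $\log M\approx c\mu^2/\|F\|_{H^{1/2}}^2$ and applying Chebyshev to $F_{>M}$. Integrating with $\|F\|_p^p=p\int\mu^{p-1}|\{|F|>\mu\}|\,d\mu$ then yields $\|F\|_p^p\lesssim p\,\Gamma(p/2)(C\|F\|)^p$, that is, $\|F\|_p\lesssim\sqrt p\,\|F\|_{H^{1/2}}$. We could alternatively invoke the fractional Moser--Trudinger inequality of \cite{IulaMM:fractional MT} and expand the exponential.

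The main obstacle is this exponential-integrability step. There the Chebyshev estimate for $F_{>M}$ yields only a polynomial factor $M^{-1}=e^{-c\mu^2}$, and the constants must be kept uniform in $p$. Adding the low- and high-frequency bounds proves \eqref{est:critical sqrt p}, and the density argument from Proposition~\ref{prop:radial Sob embedding} extends it to all of $\dot H^{1/2}_{D,rad}(\Omega)$.
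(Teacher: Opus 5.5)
Your proof is correct. The low-frequency step matches the paper's, but the high-frequency step takes a different route.

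\textbf{Low frequencies.} You use the same sine-kernel bound, splitting at $\lambda=1/\rho$, to get $|P^\Omega_{\leq1}f(1+\rho)|\lesssim A\sqrt{\log(2+\rho)}/(1+\rho)$. Your integration via $\rho=e^t$ and the Gamma function is fine. The paper instead notes that this majorant is bounded and lies in $L^4(\Omega)$, so interpolating between $L^4$ and $L^\infty$ gives a $p$-uniform bound with no computation.

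\textbf{High frequencies.} The paper does not pass through exponential integrability. It applies the one-dimensional Hausdorff--Young inequality, $\|h_{>1}\|_{L^p}\lesssim\|g_{>1}\|_{L^{p'}(1,\infty)}$. It then applies H\"older with the weight $\lambda^{1/2}$, using only that $g_{>1}$ is supported in $[1,\infty)$:
$\|g_{>1}\|_{L^{p'}}\leq A\bigl(\int_1^\infty\lambda^{-p/(p-2)}\,d\lambda\bigr)^{(p-2)/(2p)}=A\bigl(\tfrac{p-2}{2}\bigr)^{(p-2)/(2p)}\lesssim\sqrt p\,A$.
This takes two lines, and the uniformity in $p$ is manifest. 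Your Trudinger-type layer-cake argument proves more, namely a sub-Gaussian distribution tail, i.e.\ Moser--Trudinger-type exponential integrability. The price is more bookkeeping.

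\textbf{One correction.} Your bound $|\{|F|>\mu\}|\lesssim e^{-c\mu^2/\|F\|_{H^{1/2}}^2}$ holds only for $\mu\geq\mu_0\approx\|F\|_{H^{1/2}}$, because only then can you choose $M\geq2$. For $\mu<\mu_0$ you must use $|\{|F|>\mu\}|\leq\|F\|_{L^2}^2/\mu^2$. Its contribution to $\|F\|_p^p$ is $\frac{p}{p-2}\,\mu_0^{p-2}\|F\|_{L^2}^2$, which is harmless after taking $p$-th roots for $p\geq4$. You already recorded the needed $L^2$ control of $H_{>1}$, so with this adjustment the argument is complete.
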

	
	\begin{proof}
		It suffices first to consider $f\in C^\infty_{c,rad}(\Omega)$.
		Use the same low- and high-frequency decomposition as above and put
		\begin{align*}
			B=\left(\int_0^\infty\lambda|\mathcal F_Df(\lambda)|^2\,d\lambda\right)^{1/2}
			=(4\pi)^{-1/2}\|f\|_{\dot H_D^{1/2}(\Omega)}.
		\end{align*}
		For $\rho\geq0$, Cauchy--Schwarz gives
		\begin{align*}
			|P^\Omega_{\leq1}f(1+\rho)|
			&\lesssim\frac B{1+\rho}
			\left(\int_0^2\frac{\sin^2(\lambda\rho)}\lambda\,d\lambda\right)^{1/2}\\
			&\lesssim B\frac{\sqrt{\log(2+\rho)}}{1+\rho}.
		\end{align*}
		For $\rho\geq1$, the last bound follows by splitting the integral at
		$\lambda=\rho^{-1}$ and using $|\sin z|\leq\min\{|z|,1\}$;
		for $0\leq\rho\leq1$, use $|\sin(\lambda\rho)|\leq\lambda\rho$.
		Since
		\begin{align*}
			\sup_{\rho\geq0}\frac{\sqrt{\log(2+\rho)}}{1+\rho}<\infty,
			\qquad
			\int_0^\infty\frac{\log^2(2+\rho)}{(1+\rho)^2}\,d\rho<\infty,
		\end{align*}
		we obtain
		\begin{align*}
			\|P^\Omega_{\leq1}f\|_{L^4(\Omega)}
			+\|P^\Omega_{\leq1}f\|_{L^\infty(\Omega)}\lesssim B.
		\end{align*}
		Interpolation yields
		\begin{align}\label{est:critical low uniform p}
			\|P^\Omega_{\leq1}f\|_{L^p(\Omega)}\lesssim B,
			\qquad 4\leq p<\infty,
		\end{align}
		with a constant independent of $p$.
		
		For the high-frequency part, \eqref{est:high radial comparison}
		and the one-dimensional Hausdorff--Young inequality give
		\begin{align*}
			\|P^\Omega_{>1}f\|_{L^p(\Omega)}
			\lesssim\|g_{>1}\|_{L^{p'}(1,\infty)},
			\qquad p'=\frac p{p-1},
		\end{align*}
		with a constant uniform for $p\geq4$.
		H\"older's inequality implies
		\begin{align*}
			\|g_{>1}\|_{L^{p'}(1,\infty)}
			&\leq B\left(\int_1^\infty
			\lambda^{-p/(p-2)}\,d\lambda\right)^{(p-2)/(2p)}\\
			&=B\left(\frac{p-2}{2}\right)^{(p-2)/(2p)}
			\lesssim\sqrt p\,B.
		\end{align*}
		Together with \eqref{est:critical low uniform p}, this proves
		\eqref{est:critical sqrt p} for smooth radial functions.
		The completion argument in Proposition~\ref{prop:radial Sob embedding}
		extends it to $\dot H^{1/2}_{D,rad}(\Omega)$ for each finite $p$.
	\end{proof}
	
	\begin{remark}
		For $\frac12\leq s\leq\frac34$, the additional displacement assumption
		$f\in L^4(\Omega)$ is redundant in the radial class:
		\begin{align*}
			\dot H^s_{D,rad}(\Omega)\cap L^4(\Omega)
			=\dot H^s_{D,rad}(\Omega),
			\qquad
			\|f\|_{\dot H_D^s}+\|f\|_{L^4}
			\approx_s\|f\|_{\dot H_D^s}.
		\end{align*}
		The quantitative estimate \eqref{est:critical sqrt p} does not assert
		an embedding of $\dot H^{1/2}_{D,rad}(\Omega)$ into $L^\infty(\Omega)$.
	\end{remark}
	
	\section{Linear estimates}\label{sect:linear est}
	
	We record the linear estimates used in the nonlinear argument.
	The critical Strichartz estimate below is a consequence of the
	global-in-time radial estimates of Xu and Yang
	\cite[Theorem~3.3]{XuYang:NLW}.
	We use their result as an input and only explain its formulation in
	the Lebesgue and spectral Sobolev spaces needed here.
	The remaining estimates follow directly from the Dirichlet functional
	calculus recalled in Section~\ref{sect:DF transf}.
	
	Let $I\subset\R$ be an interval containing zero and consider
	\begin{equation}\label{eq:line wave}
		\begin{cases}
			\partial_t^2u-\Delta_\Omega u=F,&(t,x)\in I\times\Omega,\\
			u(0,x)=u_0(x),\quad\partial_tu(0,x)=u_1(x),&x\in\Omega,\\
			u(t,x)=0,&x\in\partial\Omega.
		\end{cases}
	\end{equation}
	Its mild formulation is
	\begin{align}\label{lin wave}
		u(t)={}&\cos(t\sqrt{-\Delta_\Omega})u_0
		+\frac{\sin(t\sqrt{-\Delta_\Omega})}{\sqrt{-\Delta_\Omega}}u_1
		\nonumber\\
		&+\int_0^t
		\frac{\sin((t-\tau)\sqrt{-\Delta_\Omega})}{\sqrt{-\Delta_\Omega}}
		F(\tau)\,d\tau.
	\end{align}
	For rough data the propagators are defined spectrally.
	When the source is measured only in a Strichartz dual space, the
	Duhamel term is defined by continuous extension from smooth sources.
	
	\subsection{Spectral energy estimates}
	
	\begin{proposition}[Spectral energy estimate]
		\label{prop:linear spectral energy revised}
		Let $\frac14\leq s\leq1$ and let the data and source be radial, with
		\begin{align*}
			(u_0,u_1)&\in\dot H^s_{D,rad}(\Omega)
			\times\dot H^{s-1}_{D,rad}(\Omega),\\
			F&\in L^1\bigl(I;\dot H_D^{s-1}(\Omega)\bigr).
		\end{align*}
		Then \eqref{lin wave} defines a continuous pair in
		$\dot H_D^s\times\dot H_D^{s-1}$ and
		\begin{align}\label{est:linear spectral energy revised}
			&\|u\|_{L^\infty(I;\dot H_D^s)}
			+\|\partial_tu\|_{L^\infty(I;\dot H_D^{s-1})}
			\nonumber\\
			&\qquad\lesssim
			\|u_0\|_{\dot H_D^s}+\|u_1\|_{\dot H_D^{s-1}}
			+\|F\|_{L^1(I;\dot H_D^{s-1})}.
		\end{align}
		The implicit constant is independent of $I$.
	\end{proposition}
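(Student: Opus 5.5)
The plan is to work entirely on the distorted Fourier side, where the propagators act as scalar multipliers. By \eqref{eq:radial spectral Sob norm}, for every real $\sigma$ in the range used (including the negative indices $s-1\in[-\frac34,0]$, which are defined by the same weighted norm), $\dot H^\sigma_{D,rad}$ is isometric, via $(4\pi)^{1/2}\mathcal F_D$, to $L^2((0,\infty),\lambda^{2\sigma}d\lambda)$. Under $\mathcal F_D$, the operators $\cos(t\sqrt{-\Delta_\Omega})$ and $\sin(t\sqrt{-\Delta_\Omega})/\sqrt{-\Delta_\Omega}$ become multiplication by $\cos(t\lambda)$ and $\sin(t\lambda)/\lambda$. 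The first step is therefore the pointwise identities
\begin{align*}
\lambda^{s}\bigl|\mathcal F_Du(t)\bigr|
&\leq\lambda^s|\mathcal F_Du_0|+\lambda^{s-1}|\mathcal F_Du_1|
+\int_I\lambda^{s-1}|\mathcal F_DF(\tau)|\,d\tau,
\end{align*}
and the analogous bound for $\lambda^{s-1}|\mathcal F_D\partial_tu(t)|$, obtained from $\partial_t$ of the symbols: $-\lambda\sin(t\lambda)$, $\cos(t\lambda)$, and $\cos((t-\tau)\lambda)$. Both bounds use only $|\cos|,|\sin|\leq1$, so no constant depends on $I$ or $t$.

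For the homogeneous part, I will take $L^2(d\lambda)$ norms, which gives the estimate at once. For the Duhamel term, I will apply Minkowski's integral inequality in $L^2(d\lambda)$, moving the $\tau$-integral outside, to get
\[
\Bigl\|\int_0^t\tfrac{\sin((t-\tau)\sqrt{-\Delta_\Omega})}{\sqrt{-\Delta_\Omega}}F(\tau)\,d\tau\Bigr\|_{\dot H_D^s}\leq\int_I\|F(\tau)\|_{\dot H_D^{s-1}}d\tau .
\]
The same argument bounds its time derivative in $\dot H_D^{s-1}$. This proves \eqref{est:linear spectral energy revised} with constant $1$, uniformly in $I$.

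The main point requiring care is giving meaning to the Duhamel integral and its continuity, since $F$ is only Bochner integrable in $\dot H^{s-1}_D$. I will first define the integral as a Bochner integral in the Hilbert space $\dot H^{s}_D$. The integrand $\tau\mapsto \mathbf 1_{[0,t]}(\tau)\,S(t-\tau)F(\tau)$ is strongly measurable, because $S(\cdot)$ is a strongly continuous family of contractions $\dot H^{s-1}_D\to\dot H^s_D$, uniformly bounded by $1$. Its norm is integrable by the bound above. Equivalently, I can define it on simple or smooth $F$ and extend by density using the estimate, which matches the paper's convention of continuous extension.

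Continuity in $t$ will then come from dominated convergence on the spectral side. For the free part, $\|(\cos(t\lambda)-\cos(t_0\lambda))\lambda^s\mathcal F_Du_0\|_{L^2}\to0$ by dominated convergence, and likewise for the other terms. For the Duhamel part, I will split the difference into $\int_{t_0}^t$, which is small by absolute continuity of the integral, and a term with integrand $(S(t-\tau)-S(t_0-\tau))F(\tau)$, which tends to $0$ pointwise in $\tau$ by strong continuity and is dominated by $2\|F(\tau)\|$. The derivative $\partial_tu$ in $\dot H^{s-1}_D$ will be identified with the differentiated formula by checking the identity on smooth data and passing to the limit. The restriction $\frac14\leq s\leq1$ enters only through the definition of the negative-index spaces and plays no role in the estimate itself.
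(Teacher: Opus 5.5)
Your proposal is correct and follows essentially the same route as the paper's proof. Both pass to the distorted Fourier side, use $|\cos(t\lambda)|,|\sin(t\lambda)|\leq1$ to get the multiplier bounds, control the Duhamel term by the triangle (Minkowski) inequality for Banach-space integrals, and obtain continuity from dominated convergence in $\lambda$ together with the $L^1$ integrability of $F$; your Bochner-measurability discussion and the splitting of the Duhamel difference only spell out what the paper leaves implicit (and the constant for the sum of the two norms is $2$ rather than $1$, which is harmless).
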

	
	\begin{proof}
		By the distorted Fourier representation,
		\begin{align*}
			\mathcal F_D\bigl(\cos(t\sqrt{-\Delta_\Omega})f\bigr)(\lambda)
			&=\cos(t\lambda)\mathcal F_Df(\lambda),\\
			\mathcal F_D\left(
			\frac{\sin(t\sqrt{-\Delta_\Omega})}{\sqrt{-\Delta_\Omega}}g
			\right)(\lambda)
			&=\frac{\sin(t\lambda)}{\lambda}\mathcal F_Dg(\lambda).
		\end{align*}
		The bound $|\sin(t\lambda)|,|\cos(t\lambda)|\leq1$ implies
		\begin{align*}
			\|\cos(t\sqrt{-\Delta_\Omega})f\|_{\dot H_D^s}
			&\leq\|f\|_{\dot H_D^s},\\
			\left\|\frac{\sin(t\sqrt{-\Delta_\Omega})}
			{\sqrt{-\Delta_\Omega}}g\right\|_{\dot H_D^s}
			&\leq\|g\|_{\dot H_D^{s-1}},\\
			\|\sqrt{-\Delta_\Omega}\sin(t\sqrt{-\Delta_\Omega})f
			\|_{\dot H_D^{s-1}}
			&\leq\|f\|_{\dot H_D^s},\\
			\|\cos(t\sqrt{-\Delta_\Omega})g\|_{\dot H_D^{s-1}}
			&\leq\|g\|_{\dot H_D^{s-1}}.
		\end{align*}
		For example, with the standard three-dimensional measure,
		\begin{align*}
			\left\|\frac{\sin(t\sqrt{-\Delta_\Omega})}
			{\sqrt{-\Delta_\Omega}}g\right\|_{\dot H_D^s}^2
			&=4\pi\int_0^\infty\lambda^{2s-2}
			|\sin(t\lambda)|^2|\mathcal F_Dg(\lambda)|^2\,d\lambda\\
			&\leq\|g\|_{\dot H_D^{s-1}}^2.
		\end{align*}
		Apply these estimates to \eqref{lin wave} and its time derivative,
		and use the triangle inequality for Banach-space integrals.
		Strong continuity follows by dominated convergence in the spectral
		variable and the $L^1$ integrability of the source.
	\end{proof}
	
	\subsection{The critical Strichartz estimate}
	
	\begin{theorem}[Critical Strichartz estimate;
		\cite{XuYang:NLW}]\label{thm:St est}
		Let $u_0,u_1$ and $F$ be radial in the spatial variable. If
		\begin{align*}
			(u_0,u_1)&\in\dot H^{1/2}_{D,rad}(\Omega)
			\times\dot H^{-1/2}_{D,rad}(\Omega),\\
			F&\in L^{4/3}(I\times\Omega),
		\end{align*}
		then the solution of \eqref{eq:line wave} satisfies
		\begin{align}\label{est:critical linear revised}
			&\|u\|_{L^4(I\times\Omega)}
			+\|u\|_{L^\infty(I;\dot H_D^{1/2})}
			+\|\partial_tu\|_{L^\infty(I;\dot H_D^{-1/2})}
			\nonumber\\
			&\qquad\leq C\left(
			\|u_0\|_{\dot H_D^{1/2}}
			+\|u_1\|_{\dot H_D^{-1/2}}
			+\|F\|_{L^{4/3}(I\times\Omega)}\right).
		\end{align}
		Moreover, $(u,\partial_tu)$ is continuous in time with values in
		$\dot H_D^{1/2}\times\dot H_D^{-1/2}$.
		The constant $C$ is independent of $I$; in particular, the estimate
		holds on $I=\R$.
	\end{theorem}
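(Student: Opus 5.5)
The plan is to take the global-in-time radial Strichartz estimate of Xu and Yang \cite[Theorem~3.3]{XuYang:NLW} as the input, and to translate it into the Lebesgue and spectral Sobolev spaces fixed in Section~\ref{sect:DF transf}. Three issues need care: the Besov-type norms in which such estimates are usually stated, the meaning of the Duhamel term for sources in $L^{4/3}_{t,x}$, and continuity in time.

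First I treat the homogeneous part. The energy bound $\|u\|_{L^\infty\dot H_D^{1/2}}+\|\partial_tu\|_{L^\infty\dot H_D^{-1/2}}$ for the free evolution follows exactly as in Proposition~\ref{prop:linear spectral energy revised} with $s=\frac12$: the multipliers $\cos(t\lambda)$ and $\sin(t\lambda)$ are bounded by one in the weighted spectral norm \eqref{eq:radial spectral Sob norm}.

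For the $L^4_{t,x}$ bound I use the admissible pair $(4,4)$, which in three dimensions sits exactly at regularity $\frac12$. If the cited estimate is phrased as
\[
\|e^{it\sqrt{-\Delta_\Omega}}f\|_{L^4_tL^4_x}\lesssim\|f\|_{\dot B^{1/2}_{D,2,2}},
\]
I identify $\dot B^{1/2}_{D,2,2}$ with $\dot H_D^{1/2}$ by Proposition~\ref{prop:squ funt} with $p=2$, where the identification is immediate from Plancherel in the spectral variable. Writing $\cos$ and $\sin$ as combinations of $e^{\pm it\sqrt{-\Delta_\Omega}}$, and using that $(-\Delta_\Omega)^{-1/2}$ maps $\dot H_D^{-1/2}$ isometrically onto $\dot H_D^{1/2}$, the estimate follows for test data. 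It extends to general data by density of radial test functions, which is obtained by rotation averaging as in Proposition~\ref{prop:radial Sob embedding}.

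For the inhomogeneous term I apply a duality/$TT^*$ argument. The dual homogeneous estimate gives
\[
\Bigl\|\int_\R e^{-i\tau\sqrt{-\Delta_\Omega}}(-\Delta_\Omega)^{-1/2}F(\tau)\,d\tau\Bigr\|_{\dot H_D^{1/2}}\lesssim\|F\|_{L^{4/3}_{t,x}}.
\]
This yields the $L^\infty\dot H_D^{1/2}\times L^\infty\dot H_D^{-1/2}$ bound for the Duhamel term truncated to $[0,t]$, by applying it with $F\mathbf 1_{[0,t]}$. The retarded $L^4_{t,x}$ bound follows from the Christ--Kiselev lemma, which applies since $\frac43<4$; alternatively, it may be read off directly from \cite{XuYang:NLW}.

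All bounds are first proved for $F\in C_c^\infty(I\times\Omega)$ radial, where the Duhamel integral is classical. The operator is then extended by continuity to $L^{4/3}$, consistent with the convention stated after \eqref{lin wave}. The constants never depend on $I$, since all estimates hold on $\R$ and are restricted to $I$ afterwards.

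Continuity in time is proved in two parts. For the free part it follows by dominated convergence in $\lambda$. For the Duhamel part I argue by approximation: for smooth sources $F_n\to F$ in $L^{4/3}$, the Duhamel pairs are continuous in $\dot H_D^{1/2}\times\dot H_D^{-1/2}$, and the uniform bound \eqref{est:critical linear revised} makes them converge uniformly in $t$, so the limit is continuous.

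I expect the main obstacle to be the bookkeeping against the precise form of \cite[Theorem~3.3]{XuYang:NLW}, namely which norm on the data, which spaces for the source, and whether the inhomogeneous bound is included, rather than any new analysis. If only the homogeneous estimate is available there, the $TT^*$ plus Christ--Kiselev route above supplies the rest.
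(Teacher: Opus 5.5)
\textbf{Gap: the Besov-to-Lebesgue conversion on the solution side is missing.} Your architecture is sound and differs from the paper's. You take only the homogeneous bound from \cite{XuYang:NLW} and build the inhomogeneous one by $TT^*$ and Christ--Kiselev. The paper instead reads the full inhomogeneous estimate off \cite[Theorem~3.3]{XuYang:NLW} with $q_i=r_i=4$, $\rho_i=0$, $\mu=\frac12$. But the step that carries the actual content is missing. The cited theorem does not bound the solution in $L^4_tL^4_x$; it bounds it in $L^4_t\dot B^0_{D,4,2}$, with source in $L^{4/3}_t\dot B^0_{D,4/3,2}$.

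You begin with ``if the cited estimate is phrased as $\|e^{it\sqrt{-\Delta_\Omega}}f\|_{L^4_tL^4_x}\lesssim\ldots$''. You then convert only the \emph{data} norm $\dot B^{1/2}_{D,2,2}$, which is the trivial Plancherel case. The conversion actually needed is on the solution side, at $p=4$, and it is not a Plancherel fact. It requires the $L^p$ square-function estimate for the Dirichlet Littlewood--Paley pieces (Proposition~\ref{prop:squ funt}, from \cite{KVZ16:IMRN Riesz tf}), together with Minkowski's inequality in $L^2$ (using $4\geq2$):
\[
\|f\|_{L^4}\lesssim\Bigl\|\Bigl(\sum_N|P^\Omega_Nf|^2\Bigr)^{1/2}\Bigr\|_{L^4}
=\Bigl\|\sum_N|P^\Omega_Nf|^2\Bigr\|_{L^2}^{1/2}
\leq\Bigl(\sum_N\|P^\Omega_Nf\|_{L^4}^2\Bigr)^{1/2}.
\]
This holds first for test functions and then by density. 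It is exactly what the paper's proof consists of. Without it, nothing in your argument extracts the $L^4_{t,x}$ norm from the cited theorem.

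\textbf{Comparison once the embedding is inserted.} With $\dot B^0_{D,4,2}\hookrightarrow L^4$ in place, your route closes and has a small advantage. The homogeneous estimate then lands in $L^4_{t,x}$, so the dual estimate starts from $L^{4/3}_{t,x}$ by plain $L^4$--$L^{4/3}$ duality. You therefore never need the source-side embedding $L^{4/3}\hookrightarrow\dot B^0_{D,4/3,2}$, which the paper proves by the square-function estimate at $p=\frac43$ and the reverse Minkowski inequality. The price is the $TT^*$ and Christ--Kiselev machinery (valid since $\frac43<4$). The paper avoids that because the inhomogeneous bound is already part of the cited theorem. Your treatment of the energy part of the Duhamel term, applying the dual estimate to $F\mathbf 1_{[0,t]}$, is fine. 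So is your proof of continuity in time by approximation with smooth sources.
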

	
	\begin{proof}
		In \cite[Theorem~3.3]{XuYang:NLW}, choose
		\begin{align*}
			q_1=q_2=r_1=r_2=4,\qquad
			\rho_1=\rho_2=0,\qquad \mu=\frac12.
		\end{align*}
		These parameters satisfy the admissibility and regularity conditions
		of that theorem. Its estimate controls the solution in
		$L_t^4\dot B^0_{D,4,2}$ and the continuous critical energy class,
		with source norm $L_t^{4/3}\dot B^0_{D,4/3,2}$.
		The square-function estimates imply the continuous embeddings
		\begin{align*}
			\dot B^0_{D,4,2}(\Omega)&\hookrightarrow L^4(\Omega),\\
			L^{4/3}(\Omega)&\hookrightarrow\dot B^0_{D,4/3,2}(\Omega).
		\end{align*}
		Indeed, for smooth functions the square-function equivalence and
		Minkowski's inequality give
		\begin{align*}
			\|f\|_4
			&\lesssim
			\left(\sum_N\|P_N^\Omega f\|_4^2\right)^{1/2},\\
			\left(\sum_N\|P_N^\Omega f\|_{4/3}^2\right)^{1/2}
			&\lesssim
			\left\|\left(\sum_N|P_N^\Omega f|^2\right)^{1/2}\right\|_{4/3}
			\lesssim\|f\|_{4/3}.
		\end{align*}
		Combining these embeddings with the cited theorem yields
		\eqref{est:critical linear revised}.
	\end{proof}
	
	\begin{remark}
		The estimate used here is the critical $L^4_{t,x}$ Strichartz estimate,
		not the additional radial $L_t^2L_x^q$ estimate in
		\cite[Theorem~3.5]{XuYang:NLW}.
		The latter is not needed for the logarithmic energy argument below.
		The dispersive estimates and their proofs are available in
		\cite[Section~3]{XuYang:NLW} and are not reproduced here.
	\end{remark}
	
	\subsection{A local inhomogeneous energy estimate}
	
	For the local construction of the remainder, we also need the
	inhomogeneous energy space. Recall that
	\begin{align*}
		H_D^1(\Omega)=H_0^1(\Omega),\qquad
		\|f\|_{H_D^1}^2=\|f\|_2^2+\|\sqrt{-\Delta_\Omega}f\|_2^2.
	\end{align*}
	
	\begin{proposition}\label{prop:linear local H1 revised}
		Let $0<T\leq1$, $h_0\in H_D^1(\Omega)$,
		$h_1\in L^2(\Omega)$, and
		$F\in L^1([-T,T];L^2(\Omega))$.
		Then the linear solution $h$ with initial pair $(h_0,h_1)$ satisfies
		\begin{align}\label{est:linear local H1 revised}
			&\|h\|_{C([-T,T];H_D^1)}
			+\|\partial_th\|_{C([-T,T];L^2)}
			\nonumber\\
			&\qquad\leq C\left(
			\|h_0\|_{H_D^1}+\|h_1\|_2
			+\|F\|_{L^1([-T,T];L^2)}\right),
		\end{align}
		where $C$ is independent of $T\leq1$.
	\end{proposition}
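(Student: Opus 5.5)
The plan is to work entirely through the spectral functional calculus of $-\Delta_\Omega$ on $L^2(\Omega)$. This avoids any use of the radial distorted Fourier transform, since the data here need not be radial. Write $A=\sqrt{-\Delta_\Omega}$, a nonnegative self-adjoint operator. The inhomogeneous norm is
\begin{align*}
\|f\|_{H_D^1}^2=\|(I+A^2)^{1/2}f\|_2^2 .
\end{align*}
The solution is
\begin{align*}
h(t)=\cos(tA)h_0+\frac{\sin(tA)}{A}h_1+\int_0^t\frac{\sin((t-\tau)A)}{A}F(\tau)\,d\tau ,
\end{align*}
and its time derivative is
\begin{align*}
\partial_th(t)=-A\sin(tA)h_0+\cos(tA)h_1+\int_0^t\cos((t-\tau)A)F(\tau)\,d\tau .
\end{align*}
Here $\sin(tA)/A$ denotes the bounded Borel function $\lambda\mapsto\sin(t\lambda)/\lambda$, extended by $t$ at $\lambda=0$.

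\textbf{Step 1: bounds on each propagator.} The spectral theorem reduces each bound to a sup bound on a scalar symbol.
\begin{itemize}
\item Since $|\cos(t\lambda)|\le1$, the operator $\cos(tA)$ is a contraction on $L^2$. It commutes with $A$, so it is also a contraction on $H_D^1$.
\item Since $|\lambda\sin(t\lambda)|\le\lambda$, we get $\|A\sin(tA)h_0\|_2\le\|Ah_0\|_2\le\|h_0\|_{H_D^1}$.
\item For the sine propagator we need the symbol bound
\begin{align*}
(1+\lambda^2)^{1/2}\frac{|\sin(t\lambda)|}{\lambda}\le |t|+1\le 2,\qquad |t|\le T\le1 .
\end{align*}
This follows from $|\sin(t\lambda)|/\lambda\le|t|$ and $|\sin(t\lambda)|\le1$. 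It is the only point where the restriction $T\le1$ enters. It is also why the constant is uniform in $T\le1$ but would grow linearly for large $T$, because the $L^2$ part of the $H_D^1$ norm is not controlled by the energy.
\end{itemize}
Hence $\|\frac{\sin(tA)}{A}h_1\|_{H_D^1}\le2\|h_1\|_2$ and $\|\cos(tA)h_1\|_2\le\|h_1\|_2$.

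\textbf{Step 2: the Duhamel term.} Apply the same operator bounds inside the Bochner integral, using Minkowski's inequality for Banach-space valued integrals. This gives
\begin{align*}
\Bigl\|\int_0^t\frac{\sin((t-\tau)A)}{A}F(\tau)\,d\tau\Bigr\|_{H_D^1}\le 2\|F\|_{L^1([-T,T];L^2)} ,
\end{align*}
because $|t-\tau|\le1$ is guaranteed when $t,\tau$ lie in an interval of length at most one through zero, such as $[0,t]$ with $|t|\le T\le1$. The cosine Duhamel term is bounded in $L^2$ by $\|F\|_{L^1L^2}$ in the same way. Strong measurability of the integrands follows from strong continuity of $\tau\mapsto\cos((t-\tau)A)$ and $\sin((t-\tau)A)/A$ together with the measurability of $F$.

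\textbf{Step 3: continuity in time.} Continuity of $t\mapsto h(t)\in H_D^1$ and $t\mapsto\partial_th(t)\in L^2$ comes from two facts. First, strong continuity of the propagators follows by dominated convergence in the spectral measure: the symbols converge pointwise as $t\to t_0$ and are uniformly bounded by Step 1. Second, for the integral terms, split the difference at $t$ and $t_0$ into the integral over $[t_0,t]$, which tends to zero by absolute continuity of the $L^1$ integral, plus a dominated-convergence term. Identifying $\partial_th$ with the formula above requires justification for rough data. First verify it for $h_0\in D(A^2)$, $h_1\in D(A)$ and $F\in C([-T,T];D(A))$, where the derivative can be computed directly. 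Then extend by density using the uniform bounds already established.

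\textbf{Main obstacle.} The only step requiring care is the $L^2$-part bound on $\sin(tA)/A$ near $\lambda=0$. The local-in-time symbol estimate in Step 1 handles it, which is why $T\le1$ is imposed. Everything else is routine functional calculus.
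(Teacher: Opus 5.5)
Your proposal is correct and follows essentially the same route as the paper. The paper bounds the homogeneous energy part by the spectral energy estimate at $s=1$, applied without radial symmetry, and the extra $L^2$ part via $|\sin(t\lambda)|\le|t|\lambda$ with $T\le1$; your single symbol bound $(1+\lambda^2)^{1/2}|\sin(t\lambda)|/\lambda\le 1+|t|$ combines these two estimates into one, and you additionally spell out the continuity and derivative-identification details that the paper leaves implicit.
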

	
	\begin{proof}
		Proposition~\ref{prop:linear spectral energy revised} with $s=1$
		controls the homogeneous energy norms in the radial case; the same
		spectral argument works without radial symmetry.
		To control the additional $L^2$ norm, use
		\begin{align*}
			\|\cos(t\sqrt{-\Delta_\Omega})h_0\|_2&\leq\|h_0\|_2,\\
			\left\|\frac{\sin(t\sqrt{-\Delta_\Omega})}
			{\sqrt{-\Delta_\Omega}}g\right\|_2&\leq |t|\|g\|_2.
		\end{align*}
		The second estimate follows from
		$|\sin(t\lambda)|\leq|t|\lambda$.
		Applying it to the initial velocity and the Duhamel term gives
		\begin{align*}
			\sup_{|t|\leq T}\|h(t)\|_2
			\leq\|h_0\|_2+T\|h_1\|_2
			+T\|F\|_{L^1([-T,T];L^2)}.
		\end{align*}
		Since $T\leq1$, the desired estimate follows.
	\end{proof}
	
	\section{Global well-posedness: Proof of Theorem \ref{thm:nlu gwp}}\label{sect:gwp}
	
	In this section, we prove Theorem~\ref{thm:nlu gwp}.
	We combine the critical Strichartz estimate of
	Xu and Yang~\cite{XuYang:NLW} with the radial embeddings in
	Section~\ref{sect:DF transf}.
	Following the nonlinear Fourier truncation framework of
	\cite{Bourgain98:FTM,GallPlan03:NLW,KenigPV00:NLW,XuYang:NLW},
	we construct a small global nonlinear solution from the high-frequency
	initial data. The remaining component is controlled by a logarithmic
	energy estimate.
	Throughout the global argument, we assume that
	$\frac12\leq s\leq\frac34$ and
	\begin{align*}
		(u_0,u_1)\in\dot H^s_{D,rad}(\Omega)
		\times\dot H^{s-1}_{D,rad}(\Omega).
	\end{align*}
	The local result below holds on the larger range
	$\frac14\leq s\leq\frac34$.
	
	\subsection{Nonlinear decomposition and small critical tails}\label{subs:w H1/2 GWP}
	
	Following the nonlinear Fourier truncation framework in
	\cite{GallPlan03:NLW,XuYang:NLW}, we first construct the nonlinear
	component associated with the high-frequency part of the initial data.
	Throughout this subsection, assume that
	$\frac12\leq s\leq\frac34$ and
	\begin{align*}
		(u_0,u_1)\in
		\dot H^s_{D,rad}(\Omega)\times\dot H^{s-1}_{D,rad}(\Omega).
	\end{align*}
	For an integer $J\geq0$, set
	\begin{align*}
		w_0=P^\Omega_{>2^J}u_0,\qquad
		w_1=P^\Omega_{>2^J}u_1,
	\end{align*}
	and consider
	\begin{equation}\label{eq:nlw}
		\begin{cases}
			\partial_t^2w-\Delta_\Omega w+w^3=0,
			&(t,x)\in\R\times\Omega,\\
			w(0,x)=w_0(x),\quad\partial_tw(0,x)=w_1(x),
			&x\in\Omega,\\
			w(t,x)=0,
			&x\in\partial\Omega.
		\end{cases}
	\end{equation}
	Since $P^\Omega_{>2^J}$ is defined through the Dirichlet spectral
	calculus, the truncated data belong to the corresponding radial
	Dirichlet Sobolev spaces. The boundary condition is understood in
	the sense specified for the main theorem.
	Only the initial data are spectrally localized; no preservation of
	this localization under the nonlinear evolution is assumed.
	
	The smallness required for the construction is measured by the
	critical norm
	\begin{align}\label{data:w small}
		\delta_J
		:=\left(
		\|w_0\|_{\dot H_D^{1/2}}^2
		+\|w_1\|_{\dot H_D^{-1/2}}^2
		\right)^{1/2}.
	\end{align}
	For $s>\frac12$, the support of the high-frequency multiplier gives
	\begin{align*}
		\delta_J
		\lesssim
		2^{-J(s-\frac12)}
		\left(
		\|u_0\|_{\dot H_D^s}^2
		+\|u_1\|_{\dot H_D^{s-1}}^2
		\right)^{1/2}
		\longrightarrow0
		\qquad\text{as }J\to\infty.
	\end{align*}
	At $s=\frac12$, the same conclusion follows from the convergence
	of the critical spectral tails:
	\begin{align*}
		\delta_J^2
		&\leq4\pi\int_{2^J}^{\infty}
		\left(
		\lambda|\mathcal F_Du_0(\lambda)|^2
		+\lambda^{-1}|\mathcal F_Du_1(\lambda)|^2
		\right)\,d\lambda
		\longrightarrow0.
	\end{align*}
	Consequently, for any prescribed sufficiently small $\epsilon>0$,
	we may fix $J$, depending on the initial data and $\epsilon$, such that
	$\delta_J\leq\epsilon$.
	
	We now apply the critical Strichartz estimate in
	Theorem~\ref{thm:St est}, obtained from
	\cite[Theorem~3.3]{XuYang:NLW}, to the Duhamel formulation of
	\eqref{eq:nlw}. A contraction argument in
	$L^4(\R\times\Omega)$ yields a global solution with small critical
	norm. The persistence of the initial $s$-level regularity is then
	established separately below.
	
	\subsection{Local well-posedness by Picard iteration}
	
	We use the negative-order spectral spaces defined in
	Section~\ref{sect:DF transf}.
	
	\begin{proposition}[Local well-posedness]\label{prop:general s local Picard}
		Let $\Omega=\R^3\backslash\overline{B(0,1)}$ and
		$\frac14\leq s\leq\frac34$.
		For every radial initial pair
		\begin{align*}
			(w_0,w_1)\in\dot H^s_{D,rad}(\Omega)
			\times\dot H^{s-1}_{D,rad}(\Omega),
		\end{align*}
		there exists $T>0$ such that the Dirichlet problem
		\begin{equation}\label{eq:general s local Picard}
			\begin{cases}
				\partial_t^2w-\Delta_\Omega w+w^3=0,&(t,x)\in[-T,T]\times\Omega,\\
				w(0,x)=w_0(x),\quad\partial_tw(0,x)=w_1(x),&x\in\Omega,
			\end{cases}
		\end{equation}
		has a unique mild solution satisfying
		\begin{align}\label{reg:general s local Picard}
			(w,\partial_tw)\in
			C([-T,T];\dot H^s_{D,rad}(\Omega)\times\dot H^{s-1}_{D,rad}(\Omega)).
		\end{align}
		Here $-\Delta_\Omega$ is the Dirichlet Laplacian, and the boundary condition
		is understood through its spectral propagators and Sobolev spaces.
		Writing
		\begin{align*}
			M=\|w_0\|_{\dot H_D^s}+\|w_1\|_{\dot H_D^{s-1}},
		\end{align*}
		the existence time may be chosen as $T=c_s(1+M)^{-2}$ for some $c_s>0$.
		The solution map is locally Lipschitz on a common local interval for data
		in a bounded subset of the initial-data space.
		If the maximal forward lifespan has a finite endpoint $T_+$, then
		\begin{align}\label{criterion:general s local Picard}
			\sup_{0\leq t<T_+}
			\left(\|w(t)\|_{\dot H_D^s}
			+\|\partial_tw(t)\|_{\dot H_D^{s-1}}\right)=\infty.
		\end{align}
		In particular, the proposition applies to the Fourier-truncated initial pair
		$(P^\Omega_{>2^J}u_0,P^\Omega_{>2^J}u_1)$.
	\end{proposition}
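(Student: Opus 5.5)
We set up a Picard iteration for the Duhamel map
\begin{align*}
\Phi(w)(t)=\cos(t\sqrt{-\Delta_\Omega})w_0+\frac{\sin(t\sqrt{-\Delta_\Omega})}{\sqrt{-\Delta_\Omega}}w_1-\int_0^t\frac{\sin((t-\tau)\sqrt{-\Delta_\Omega})}{\sqrt{-\Delta_\Omega}}w(\tau)^3\,d\tau
\end{align*}
on the space $X_T=C([-T,T];\dot H^s_{D,rad}(\Omega))$ with the supremum norm. For the velocity component we use the corresponding time-differentiated formula. The nonlinearity is handled only through the radial $L^4$ embedding \eqref{est:radial L4 embedding} and its dual; no Strichartz estimate is needed.

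The key nonlinear bound is
\begin{align*}
\|F\|_{\dot H_D^{s-1}}\lesssim\|F\|_{L^{4/3}(\Omega)},\qquad \tfrac14\leq s\leq\tfrac34,
\end{align*}
for radial $F$. It follows by duality from \eqref{est:radial L4 embedding}. The condition $\frac14\leq 1-s\leq\frac34$ holds exactly in this range, so $\dot H^{1-s}_{D,rad}\hookrightarrow L^4$. Hence $|\langle F,\varphi\rangle|\leq\|F\|_{4/3}\|\varphi\|_4\lesssim\|F\|_{4/3}\|\varphi\|_{\dot H^{1-s}_D}$, and we use the duality description of negative-index spaces given in Section~\ref{sect:DF transf}.

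There is one point to check. The paper defines $\dot H^{-t}$ by duality with $\dot H^t$, while here $s-1=-(1-s)$ with $1-s\in[\frac14,\frac34]$, so the definition covers this case. For $F\in L^{4/3}$ we must also identify the distributional pairing with the spectral one. We would do this by approximating $F$ by radial test functions, justified by density in $L^{4/3}$. With this bound, Hölder gives
\begin{align*}
\|w^3\|_{4/3}=\|w\|_4^3\lesssim\|w\|_{\dot H^s_D}^3,\qquad
\|w^3-\tilde w^3\|_{4/3}\lesssim\|w-\tilde w\|_4\bigl(\|w\|_4^2+\|\tilde w\|_4^2\bigr).
\end{align*}
Combining these with Proposition~\ref{prop:linear spectral energy revised} yields
\begin{align*}
\|\Phi(w)\|_{X_T}+\|\partial_t\Phi(w)\|_{C\dot H^{s-1}}\leq C_0M+C_1T\|w\|_{X_T}^3.
\end{align*}
Continuity in time of the Duhamel term comes from the same proposition, since $w^3\in L^1_t\dot H^{s-1}_D$.

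On the ball of radius $2C_0M$, the map $\Phi$ is a contraction provided $C_1T(2C_0M)^2\ll1$. The choice $T=c_s(1+M)^{-2}$ achieves this and also handles small $M$. This gives existence and local Lipschitz dependence: for two data sets with $M\leq R$, the difference estimate on the common interval $T=c_s(1+R)^{-2}$ gives a Lipschitz constant of about $2C_0$. Uniqueness in the whole continuous class \eqref{reg:general s local Picard}, rather than only in the ball, goes as follows. Any two solutions are bounded in $X_{T'}$ on a compact interval, so the difference estimate on short subintervals of length depending on that bound shows they agree. A continuity argument then propagates agreement across the interval. The radial symmetry of iterates is preserved because the propagators commute with rotations and $w^3$ is radial.

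For the blow-up criterion, suppose the norm stays bounded by $M^*$ on $[0,T_+)$. Restarting the iteration at any time $t_0$ gives a uniform existence time $c_s(1+M^*)^{-2}$. Taking $t_0$ close to $T_+$ extends the solution beyond $T_+$, which is a contradiction. The restart uses time translation of the Duhamel formula, which follows from the group property of the spectral propagators. The final claim is immediate, since $P^\Omega_{>2^J}$ is a bounded spectral multiplier with symbol at most $1$ and hence preserves both spaces.

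The main obstacle is the duality step $L^{4/3}\hookrightarrow\dot H^{s-1}_{D,rad}$: making rigorous that $w^3$, which is only known to lie in $L^{4/3}$, defines an element of the spectrally defined negative space, and that the Duhamel integral (first formed for smooth sources) extends continuously. We would handle this by density and the uniform linear bound, in the same way the paper handles Strichartz-dual sources.
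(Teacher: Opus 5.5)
Your proposal is correct and follows essentially the same route as the paper. Both arguments run a Picard/contraction scheme in $C([-T,T];\dot H^s_{D,rad})$, driven by the radial $L^4$ embeddings of $\dot H^s_D$ and $\dot H^{1-s}_D$, the dual bound $\|F\|_{\dot H_D^{s-1}}\lesssim\|F\|_{L^{4/3}}$, and the spectral multiplier bounds. Uniqueness in the full continuous class comes from subdividing into short intervals, and the continuation criterion from restarting with a uniform existence time. The only differences are cosmetic: the paper takes the radius $R=2M+1$ (which handles small or zero $M$ uniformly) instead of your $2C_0M$, and it states the duality step directly without your extra density remark.
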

	
	\begin{proof}
		All functions and spaces in the proof are radial.
		We suppress the spatial domain in norms when there is no ambiguity.
		
		\noindent\textit{Step 1: The nonlinear estimates.}
		Since both $s$ and $1-s$ belong to $[\frac14,\frac34]$,
		Proposition~\ref{prop:radial Sob embedding} gives
		\begin{align*}
			\dot H^s_{D,rad}(\Omega)\hookrightarrow L^4(\Omega),
			\qquad \dot H^{1-s}_{D,rad}(\Omega)\hookrightarrow L^4(\Omega).
		\end{align*}
		The second embedding and duality imply
		\begin{align}\label{est:general s local source}
			\|F\|_{\dot H_D^{s-1}}\lesssim_s\|F\|_{L^{4/3}}
		\end{align}
		for radial $F\in L^{4/3}(\Omega)$.
		Indeed,
		\begin{align*}
			|\langle F,\varphi\rangle|
			\leq\|F\|_{L^{4/3}}\|\varphi\|_{L^4}
			\lesssim_s\|F\|_{L^{4/3}}\|\varphi\|_{\dot H_D^{1-s}}.
		\end{align*}
		Consequently, H\"older's inequality and
		$a^3-b^3=(a-b)(a^2+ab+b^2)$ yield
		\begin{align}\label{est:general s local cubic}
			\|a^3\|_{\dot H_D^{s-1}}&\lesssim_s\|a\|_{\dot H_D^s}^3,\\
			\|a^3-b^3\|_{\dot H_D^{s-1}}
			&\lesssim_s(\|a\|_{L^4}^2+\|b\|_{L^4}^2)\|a-b\|_{L^4}
			\nonumber\\
			&\lesssim_s(\|a\|_{\dot H_D^s}^2+\|b\|_{\dot H_D^s}^2)
			\|a-b\|_{\dot H_D^s}.
			\label{est:general s local cubic difference}
		\end{align}
		In particular, the cubic map is locally Lipschitz from
		$\dot H^s_{D,rad}$ to $\dot H^{s-1}_{D,rad}$.
		
		\noindent\textit{Step 2: Linear estimates and the iteration scheme.}
		The distorted Fourier multiplier identities give
		\begin{align*}
			\|\cos(t\sqrt{-\Delta_\Omega})F\|_{\dot H_D^s}&\leq\|F\|_{\dot H_D^s},\\
			\left\|\frac{\sin(t\sqrt{-\Delta_\Omega})}{\sqrt{-\Delta_\Omega}} F\right\|_{\dot H_D^s}
			&\leq\|F\|_{\dot H_D^{s-1}},\\
			\|\sqrt{-\Delta_\Omega}\sin(t\sqrt{-\Delta_\Omega})F\|_{\dot H_D^{s-1}}&\leq\|F\|_{\dot H_D^s},\\
			\|\cos(t\sqrt{-\Delta_\Omega})F\|_{\dot H_D^{s-1}}&\leq\|F\|_{\dot H_D^{s-1}}.
		\end{align*}
		For example, the second inequality follows from
		\begin{align*}
			\left\|\frac{\sin(t\sqrt{-\Delta_\Omega})}{\sqrt{-\Delta_\Omega}} F\right\|_{\dot H_D^s}^2
			&=4\pi\int_0^\infty\lambda^{2s-2}|\sin(t\lambda)|^2
			|\mathcal F_DF(\lambda)|^2\,d\lambda\\
			&\leq\|F\|_{\dot H_D^{s-1}}^2.
		\end{align*}
		These propagators are strongly continuous in the indicated spaces, as
		follows from dominated convergence in their spectral representations.
		
		Define
		\begin{align}\label{eq:general s local zero}
			w^{(0)}(t)=\cos(t\sqrt{-\Delta_\Omega})w_0+\frac{\sin(t\sqrt{-\Delta_\Omega})}{\sqrt{-\Delta_\Omega}} w_1.
		\end{align}
		For $n\geq0$, set
		\begin{align}\label{eq:general s local iteration}
			w^{(n+1)}(t)=w^{(0)}(t)
			-\int_0^t\frac{\sin((t-\tau)\sqrt{-\Delta_\Omega})}{\sqrt{-\Delta_\Omega}}
			(w^{(n)}(\tau))^3\,d\tau.
		\end{align}
		Whenever $w^{(n)}\in C_t\dot H_D^s$, the nonlinear estimates imply
		$(w^{(n)})^3\in C_t\dot H_D^{s-1}$.
		Thus the integral in \eqref{eq:general s local iteration} is a well-defined
		Banach-space integral in $\dot H_D^s$, and the next iterate is continuous
		in time in that space.
		
		\noindent\textit{Step 3: Uniform bounds on a common time interval.}
		Let $I_T=[-T,T]$ and
		\begin{align*}
			A_n=\|w^{(n)}\|_{C(I_T;\dot H_D^s)}.
		\end{align*}
		The linear bounds give $A_0\leq M$.
		For each $t\in I_T$, the triangle inequality, the spectral multiplier
		bound, and \eqref{est:general s local cubic} imply
		\begin{align*}
			\|w^{(n+1)}(t)\|_{\dot H_D^s}
			&\leq \|w^{(0)}(t)\|_{\dot H_D^s}\\
			&\quad+\left\|\int_0^t
			\frac{\sin((t-\tau)\sqrt{-\Delta_\Omega})}{\sqrt{-\Delta_\Omega}}
			(w^{(n)}(\tau))^3\,d\tau\right\|_{\dot H_D^s}\\
			&\leq M+\int_{\min\{0,t\}}^{\max\{0,t\}}
			\|(w^{(n)}(\tau))^3\|_{\dot H_D^{s-1}}\,d\tau\\
			&\leq M+C_s\int_{\min\{0,t\}}^{\max\{0,t\}}
			\|w^{(n)}(\tau)\|_{\dot H_D^s}^3\,d\tau\\
			&\leq M+C_s|t|A_n^3
			\leq M+C_sT A_n^3.
		\end{align*}
		Here the integration interval has length $|t|\leq T$, including when
		$t<0$. Taking the supremum over $t\in I_T$ gives
		\begin{align}\label{est:general s local induction}
			A_{n+1}\leq M+C_sT A_n^3.
		\end{align}
		Choose $R=2M+1$. Fix $K_s\geq1$, depending only on $s$,
		sufficiently large to dominate the constants in the nonlinear
		bounds and the estimates for successive differences below.
		After factoring out $R^2$ in the difference estimates,
		this choice is independent of $M$, $R$, and $T$.
		Take
		\begin{align}\label{choice:general s local time}
			T=\frac1{4K_sR^2}.
		\end{align}
		If $A_n\leq R$, then
		\begin{align*}
			A_{n+1}\leq M+K_sTR^3\leq\frac R2+\frac R4<R.
		\end{align*}
		Since $A_0\leq M<R$, induction gives
		\begin{align}\label{est:general s local uniform}
			\|w^{(n)}\|_{C(I_T;\dot H_D^s)}\leq R,
			\qquad n\geq0.
		\end{align}
		In particular, every iterate is defined on the same interval.
		
		\noindent\textit{Step 4: Convergence of successive differences.}
		For $n\geq1$, subtraction of two consecutive iteration formulas gives
		\begin{align*}
			w^{(n+1)}(t)-w^{(n)}(t)
			=-\int_0^t\frac{\sin((t-\tau)\sqrt{-\Delta_\Omega})}{\sqrt{-\Delta_\Omega}}
			\left((w^{(n)}(\tau))^3-(w^{(n-1)}(\tau))^3\right)d\tau.
		\end{align*}
		By \eqref{est:general s local cubic difference} and
		\eqref{est:general s local uniform},
		\begin{align*}
			\|w^{(n+1)}-w^{(n)}\|_{C(I_T;\dot H_D^s)}
			\leq K_sTR^2\|w^{(n)}-w^{(n-1)}\|_{C(I_T;\dot H_D^s)}
			=\frac14\|w^{(n)}-w^{(n-1)}\|_{C(I_T;\dot H_D^s)}.
		\end{align*}
		Therefore,
		\begin{align}\label{est:general s local geometric}
			\|w^{(n+1)}-w^{(n)}\|_{C(I_T;\dot H_D^s)}
			\leq4^{-n}\|w^{(1)}-w^{(0)}\|_{C(I_T;\dot H_D^s)}.
		\end{align}
		The series of successive differences converges in the complete space
		$C(I_T;\dot H^s_{D,rad})$.
		Hence there exists $w\in C(I_T;\dot H^s_{D,rad})$ such that
		\begin{align}\label{conv:general s local}
			w^{(n)}\longrightarrow w\quad\hbox{in }C(I_T;\dot H_D^s),
			\qquad \|w\|_{C(I_T;\dot H_D^s)}\leq R.
		\end{align}
		Equivalently, the Duhamel map is a contraction on the closed ball
		$\{w\in C(I_T;\dot H^s_{D,rad}):\|w\|_{C_t\dot H_D^s}\leq R\}$.
		
		\noindent\textit{Step 5: Time derivatives and passage to the limit.}
		The time derivative of each iterate is
		\begin{align}\label{eq:general s local derivative}
			\partial_t w^{(n+1)}(t)
			={}&-\sqrt{-\Delta_\Omega}
			\sin(t\sqrt{-\Delta_\Omega})w_0
			+\cos(t\sqrt{-\Delta_\Omega})w_1
			\nonumber\\
			&-\int_0^t
			\cos((t-\tau)\sqrt{-\Delta_\Omega})
			(w^{(n)}(\tau))^3\,d\tau.
		\end{align}
		The formula follows first for smooth spectral data and then by the
		linear estimates; time derivatives are understood distributionally.
		The endpoint term from differentiating the integral vanishes because
		$\left.\frac{\sin(t\sqrt{-\Delta_\Omega})}{\sqrt{-\Delta_\Omega}}\right|_{t=0}=0$.
		For $n\geq1$, the same nonlinear difference estimate yields
		\begin{align*}
			&\|\partial_tw^{(n+1)}-\partial_tw^{(n)}\|_{C(I_T;\dot H_D^{s-1})}\\
			&\qquad\leq K_sTR^2
			\|w^{(n)}-w^{(n-1)}\|_{C(I_T;\dot H_D^s)}.
		\end{align*}
		The right side is summable by \eqref{est:general s local geometric}.
		Thus the derivatives converge in $C(I_T;\dot H_D^{s-1})$, and their limit
		is the distributional time derivative of the limit in
		\eqref{conv:general s local}.
		Moreover,
		\begin{align*}
			\|(w^{(n)})^3-w^3\|_{C(I_T;\dot H_D^{s-1})}
			\lesssim_s R^2\|w^{(n)}-w\|_{C(I_T;\dot H_D^s)}\longrightarrow0.
		\end{align*}
		Passing to the limit in \eqref{eq:general s local iteration} gives
		\begin{align}\label{eq:general s local mild}
			w(t)=\cos(t\sqrt{-\Delta_\Omega})w_0+\frac{\sin(t\sqrt{-\Delta_\Omega})}{\sqrt{-\Delta_\Omega}} w_1
			-\int_0^t\frac{\sin((t-\tau)\sqrt{-\Delta_\Omega})}{\sqrt{-\Delta_\Omega}} w(\tau)^3\,d\tau.
		\end{align}
		Passing to the limit in \eqref{eq:general s local derivative} also gives
		the corresponding derivative formula, so
		$w(0)=w_0$ and $\partial_tw(0)=w_1$.
		This proves existence and \eqref{reg:general s local Picard}.
		
		\noindent\textit{Step 6: Uniqueness and continuous dependence.}
		For two solutions in the ball above, the difference estimate and
		\eqref{choice:general s local time} imply
		\begin{align*}
			\|w-\widetilde w\|_{C_t\dot H_D^s}
			&\leq\|w_0-\widetilde w_0\|_{\dot H_D^s}
			+\|w_1-\widetilde w_1\|_{\dot H_D^{s-1}}
			+\frac14\|w-\widetilde w\|_{C_t\dot H_D^s}.
		\end{align*}
		After absorption, and using the derivative formula for the difference,
		we obtain
		\begin{align}\label{est:general s local stability}
			&\|w-\widetilde w\|_{C_t\dot H_D^s}
			+\|\partial_tw-\partial_t\widetilde w\|_{C_t\dot H_D^{s-1}}
			\nonumber\\
			&\qquad\lesssim_s\|w_0-\widetilde w_0\|_{\dot H_D^s}
			+\|w_1-\widetilde w_1\|_{\dot H_D^{s-1}}.
		\end{align}
		For data in a fixed bounded set, choose a common radius and time interval
		as above. This proves local Lipschitz dependence.
		In particular, equal initial data give equal solutions in the ball.
		For arbitrary continuous mild solutions on a common interval, their norms
		are bounded on compact subintervals. Subdividing into sufficiently short
		intervals and restarting the difference estimate proves uniqueness in
		the entire class \eqref{reg:general s local Picard}.
		
		The time in \eqref{choice:general s local time} is bounded below by
		$c_s(1+M)^{-2}$; restriction to this smaller time gives the stated choice.
		Restarting the same construction at any time proves the continuation
		criterion \eqref{criterion:general s local Picard}.
		Finally, $P^\Omega_{>2^J}$ is bounded on each spectral Sobolev space, so
		the conclusion applies to the Fourier-truncated initial data as well.
	\end{proof}
	
	\subsection{Global well-posedness of the high-frequency part}
	
	\begin{proposition}[Global well-posedness of the high-frequency part]
		\label{prop:high frequency global revised}
		Let $\frac12\leq s\leq\frac34$ and let the radial initial pair satisfy
		\begin{align*}
			(u_0,u_1)\in\dot H^s_{D,rad}(\Omega)
			\times\dot H^{s-1}_{D,rad}(\Omega).
		\end{align*}
		For a sufficiently large integer $J$, set
		\begin{align}\label{data:high frequency global revised}
			w_0=P^\Omega_{>2^J}u_0,\qquad w_1=P^\Omega_{>2^J}u_1.
		\end{align}
		Then the Dirichlet problem
		\begin{equation}\label{eq:high frequency global revised}
			\begin{cases}
				\partial_t^2w-\Delta_\Omega w+w^3=0,&(t,x)\in\R\times\Omega,\\
				w(0,x)=w_0(x),\quad\partial_tw(0,x)=w_1(x),&x\in\Omega,
			\end{cases}
		\end{equation}
		has a unique global mild solution satisfying
		\begin{align}\label{reg:high frequency global revised}
			(w,\partial_tw)\in C(\R;
			\dot H^s_{D,rad}(\Omega)\times\dot H^{s-1}_{D,rad}(\Omega)).
		\end{align}
		The boundary condition is understood through the Dirichlet spectral
		propagators and Sobolev spaces.
		Define
		\begin{align*}
			\delta_J=\left(\|w_0\|_{\dot H_D^{1/2}}^2
			+\|w_1\|_{\dot H_D^{-1/2}}^2\right)^{1/2},\qquad
			M_{s,J}=\|w_0\|_{\dot H_D^s}+\|w_1\|_{\dot H_D^{s-1}}.
		\end{align*}
		There exists an absolute threshold $\epsilon_0>0$ such that $J$ may be
		chosen to ensure $\delta_J\leq\epsilon_0$, and the resulting solution obeys
		\begin{align}\label{est:high frequency critical revised}
			&\|w\|_{L^4_{t,x}(\R\times\Omega)}
			+\sup_{t\in\R}\left(
			\|w(t)\|_{\dot H_D^{1/2}}
			+\|\partial_tw(t)\|_{\dot H_D^{-1/2}}\right)
			\lesssim\delta_J.
		\end{align}
		For every $T>0$, it also satisfies
		\begin{align}\label{est:high frequency s revised}
			\sup_{|t|\leq T}\left(
			\|w(t)\|_{\dot H_D^s}
			+\|\partial_tw(t)\|_{\dot H_D^{s-1}}\right)
			\lesssim_s M_{s,J}+T\delta_J^3.
		\end{align}
		The solution depends locally Lipschitz continuously on the initial pair
		in the $s$-level topology on each finite time interval, within this
		small-critical-data class.
	\end{proposition}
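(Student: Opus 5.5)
We construct $w$ in two stages. First we build a global small solution at the critical level $s=\frac12$. Then we propagate the $s$-level regularity by the Duhamel estimate, using the $L^4_{t,x}$ bound.

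\emph{Critical global solution.} Let $\delta_J\to0$ be as in Subsection~\ref{subs:w H1/2 GWP}, and fix $J$ with $\delta_J\leq\epsilon_0$. Consider the Duhamel map
\begin{align*}
\Phi(w)(t)=\cos(t\sqrt{-\Delta_\Omega})w_0+\frac{\sin(t\sqrt{-\Delta_\Omega})}{\sqrt{-\Delta_\Omega}}w_1-\int_0^t\frac{\sin((t-\tau)\sqrt{-\Delta_\Omega})}{\sqrt{-\Delta_\Omega}}w(\tau)^3\,d\tau
\end{align*}
on the ball $X=\{w:\|w\|_{L^4_{t,x}(\R\times\Omega)}\leq2C\delta_J\}$, where $C$ is the constant of Theorem~\ref{thm:St est}. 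Since $\|w^3\|_{L^{4/3}_{t,x}}=\|w\|_{L^4_{t,x}}^3$ and
\[
\|a^3-b^3\|_{L^{4/3}_{t,x}}\lesssim(\|a\|_{L^4}^2+\|b\|_{L^4}^2)\|a-b\|_{L^4},
\]
Theorem~\ref{thm:St est} on $I=\R$ gives
\[
\|\Phi(w)\|_{L^4}\leq C\delta_J+C(2C\delta_J)^3,
\]
together with a Lipschitz constant $\lesssim\delta_J^2$. For $\epsilon_0$ small (absolute), $\Phi$ is therefore a contraction on $X$, and it yields a fixed point $w$. The same theorem gives $(w,\partial_tw)\in C(\R;\dot H_D^{1/2}\times\dot H_D^{-1/2})$ and the bound \eqref{est:high frequency critical revised}.

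\emph{Persistence of $s$-regularity.} For $s=\frac12$ there is nothing more to prove, so assume $\frac12<s\leq\frac34$. The data lie in $\dot H^s_{D,rad}\times\dot H^{s-1}_{D,rad}$, so Proposition~\ref{prop:general s local Picard} gives a unique $s$-level solution on a short interval. We must identify it with the critical solution $w$, which needs a common class. Both solutions lie in $C_t L^4$: the first by Proposition~\ref{prop:radial Sob embedding} (which applies for $s,\frac12\in[\frac14,\frac34]$), the second by the same proposition at $s=\frac12$. Running the difference estimate in $C_t\dot H^{1/2}_D$ via \eqref{est:general s local cubic difference} with $s=\frac12$ shows that they coincide, by uniqueness in Proposition~\ref{prop:general s local Picard} at level $\frac12$.

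To exclude blow-up of the $s$-norm, we apply Proposition~\ref{prop:linear spectral energy revised} and the dual embedding \eqref{est:general s local source}:
\begin{align*}
\sup_{|t|\leq T}\bigl(\|w(t)\|_{\dot H_D^s}+\|\partial_tw(t)\|_{\dot H_D^{s-1}}\bigr)
\leq C M_{s,J}+C\int_{-T}^{T}\|w(\tau)^3\|_{\dot H_D^{s-1}}\,d\tau .
\end{align*}
We bound $\|w^3\|_{\dot H_D^{s-1}}\lesssim\|w\|_{L^4}^3$ and then $\|w(\tau)\|_{L^4}\lesssim\|w(\tau)\|_{\dot H_D^{1/2}}\lesssim\delta_J$ by Proposition~\ref{prop:radial Sob embedding} at $s=\frac12$. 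This gives
\[
\sup_{|t|\leq T}\bigl(\|w(t)\|_{\dot H_D^s}+\|\partial_tw(t)\|_{\dot H_D^{s-1}}\bigr)\lesssim M_{s,J}+T\delta_J^3,
\]
which is exactly \eqref{est:high frequency s revised}. Since the right side is finite for every $T$, the continuation criterion \eqref{criterion:general s local Picard} yields a global $s$-level solution with regularity \eqref{reg:high frequency global revised}. Uniqueness in the full class \eqref{reg:high frequency global revised} follows from Proposition~\ref{prop:general s local Picard}.

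\emph{Lipschitz dependence.} Take two data sets in the small critical class, with $s$-norms bounded by $M$. Subtract the Duhamel formulas on $[-T,T]$ and use \eqref{est:general s local cubic difference} with the uniform $L^4$ bounds $\lesssim\delta_J$. This gives
\[
D(t)\leq C D(0)+C\delta_J^2\int_0^{|t|}D,
\]
where $D$ denotes the $s$-level difference norm. Gronwall then yields a Lipschitz constant $e^{C\delta_J^2T}$.

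The main technical point is the identification step, i.e.\ matching the critical Strichartz solution with the $s$-level Picard solution. One could instead take the $s$-level solution as primary and use uniqueness at level $\frac12$, since $\dot H^s_D\hookrightarrow L^4$ makes both solutions mild in a common class. The a priori bound itself requires no fractional product estimate, because only $L^4$ control of $w$ enters.
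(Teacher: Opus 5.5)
Your proposal is correct and follows essentially the paper's route: a global contraction in $L^4(\R\times\Omega)$ from the critical Strichartz estimate, then identification with the $s$-level local solution through the common class $C_tL^4$ (hence $C_t\dot H_D^{1/2}$), then the a priori bound $M_{s,J}+T\delta_J^3$ from $\sup_t\|w(t)\|_{L^4}\lesssim\delta_J$, and finally the continuation criterion of Proposition~\ref{prop:general s local Picard}. The differences are minor: you skip the paper's short-time $s$-level Picard convergence (its Step~3) by invoking the local theory directly, and your Gronwall argument with the small uniform $L^4$ bounds replaces the paper's partition argument with common $s$-level bounds, giving a Lipschitz constant that depends only on $s$ and $T$.
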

	\begin{proof}
		All functions in the proof are radial. We suppress the spatial domain in norms.
		
		\noindent\textit{Step 1: Smallness of the truncated critical data.}
		The definition of the distorted Fourier projections gives
		\begin{align*}
			\mathcal F_Dw_j(\lambda)
			=\bigl(1-\phi(\lambda/2^J)\bigr)\mathcal F_Du_j(\lambda),
			\qquad j=0,1.
		\end{align*}
		The multiplier vanishes for $0\leq\lambda\leq2^J$ and has absolute value
		at most one. If $s>\frac12$, then, for $\lambda>2^J$,
		\begin{align*}
			\lambda&\leq2^{J(1-2s)}\lambda^{2s},\\
			\lambda^{-1}&\leq2^{J(1-2s)}\lambda^{2(s-1)}.
		\end{align*}
		Consequently, the spectral norm identities imply
		\begin{align}\label{est:high frequency tail rate revised}
			\delta_J\leq2^{J(1/2-s)}
			\left(\|u_0\|_{\dot H_D^s}+\|u_1\|_{\dot H_D^{s-1}}\right)
			\longrightarrow0\qquad\text{as }J\to\infty.
		\end{align}
		At $s=\frac12$, instead use
		\begin{align*}
			\|w_0\|_{\dot H_D^{1/2}}^2+\|w_1\|_{\dot H_D^{-1/2}}^2
			\leq4\pi\int_{2^J}^\infty
			\left(\lambda|\mathcal F_Du_0(\lambda)|^2
			+\lambda^{-1}|\mathcal F_Du_1(\lambda)|^2\right)d\lambda
			\longrightarrow0.
		\end{align*}
		Thus $\delta_J$ can be made arbitrarily small throughout the stated range.
		
		\noindent\textit{Step 2: Global critical Picard iteration.}
		Theorem~\ref{thm:St est} gives the following estimate
		for the radial linear equation $z_{tt}-\Delta_\Omega z=F$:
		\begin{align}\label{est:high frequency critical linear revised}
			&\|z\|_{L^4_{t,x}(\R\times\Omega)}
			+\|z\|_{C_b(\R;\dot H_D^{1/2})}
			+\|\partial_tz\|_{C_b(\R;\dot H_D^{-1/2})}\nonumber\\
			&\qquad\lesssim
			\|z(0)\|_{\dot H_D^{1/2}}+\|\partial_tz(0)\|_{\dot H_D^{-1/2}}
			+\|F\|_{L^{4/3}_{t,x}(\R\times\Omega)}.
		\end{align}
		Here $C_b$ denotes the space of bounded continuous functions
		with values in the indicated Sobolev space.
		The estimate already includes the Besov-to-Lebesgue embeddings
		established in the proof of Theorem~\ref{thm:St est}.
		
		Define the free solution
		\begin{align}\label{eq:high frequency free revised}
			w^{(0)}(t)=\cos(t\sqrt{-\Delta_\Omega})w_0
			+\frac{\sin(t\sqrt{-\Delta_\Omega})}{\sqrt{-\Delta_\Omega}}w_1
		\end{align}
		and, for $n\geq0$, define
		\begin{align}\label{eq:high frequency iteration revised}
			w^{(n+1)}(t)=w^{(0)}(t)
			-\int_0^t
			\frac{\sin((t-\tau)\sqrt{-\Delta_\Omega})}{\sqrt{-\Delta_\Omega}}
			(w^{(n)}(\tau))^3\,d\tau.
		\end{align}
		We perform the iteration in $L^4(\R\times\Omega)$ alone. Set
		\begin{align*}
			A_n=\|w^{(n)}\|_{L^4_{t,x}(\R\times\Omega)}.
		\end{align*}
		By \eqref{est:high frequency critical linear revised},
		\begin{align*}
			A_0\leq C\delta_J,\qquad
			A_{n+1}\leq C\delta_J+C\|(w^{(n)})^3\|_{L^{4/3}_{t,x}}
			=C\delta_J+CA_n^3.
		\end{align*}
		Choose $C\geq1$ and set $R=2C\delta_J$.
		For sufficiently small $\epsilon_0$, induction gives
		\begin{align}\label{est:high frequency iterate bound revised}
			A_n\leq R,\qquad n\geq0.
		\end{align}
		Indeed, $A_n\leq R$ implies $A_{n+1}\leq R/2+CR^3\leq R$
		provided that $CR^2\leq1/2$.
		
		By H\"older's inequality,
		\begin{align*}
			\|a^3-b^3\|_{L^{4/3}_{t,x}}
			\lesssim(\|a\|_{L^4_{t,x}}^2+\|b\|_{L^4_{t,x}}^2)
			\|a-b\|_{L^4_{t,x}}.
		\end{align*}
		Subtracting consecutive iteration formulas and applying
		\eqref{est:high frequency critical linear revised} therefore gives
		\begin{align*}
			\|w^{(n+1)}-w^{(n)}\|_{L^4_{t,x}}
			\leq KR^2\|w^{(n)}-w^{(n-1)}\|_{L^4_{t,x}},\qquad n\geq1,
		\end{align*}
		where $K$ is independent of $J$ and the data.
		Decrease $\epsilon_0$ so that $KR^2\leq1/2$.
		The successive differences form a summable geometric series in the complete
		space $L^4(\R\times\Omega)$. Hence $w^{(n)}\to w$ in that space, with
		$\|w\|_{L^4_{t,x}}\leq R$.
		The remaining parts of the same Strichartz estimate give
		\begin{align*}
			&\|w^{(n+1)}-w^{(n)}\|_{C_b\dot H_D^{1/2}}
			+\|\partial_tw^{(n+1)}-\partial_tw^{(n)}\|_{C_b\dot H_D^{-1/2}}\\
			&\qquad\lesssim R^2\|w^{(n)}-w^{(n-1)}\|_{L^4_{t,x}}.
		\end{align*}
		Since the right side is summable, the iterates also converge in
		$C_b(\R;\dot H_D^{1/2})$, and their derivatives converge in
		$C_b(\R;\dot H_D^{-1/2})$.
		The limits agree with the $L^4_{t,x}$ limit as distributions, and the
		limit of the derivatives is the distributional time derivative of $w$.
		Moreover, $(w^{(n)})^3\to w^3$ in $L^{4/3}_{t,x}$.
		Passing to the limit gives
		\begin{align}\label{eq:high frequency mild revised}
			w(t)=w^{(0)}(t)
			-\int_0^t
			\frac{\sin((t-\tau)\sqrt{-\Delta_\Omega})}{\sqrt{-\Delta_\Omega}}
			w(\tau)^3\,d\tau.
		\end{align}
		The differentiated formula similarly passes to the limit and recovers
		$\partial_tw(0)=w_1$; also $w(0)=w_0$.
		Applying \eqref{est:high frequency critical linear revised} to the limit gives
		\begin{align}\label{est:high frequency recovered critical revised}
			&\|w\|_{C_b\dot H_D^{1/2}}
			+\|\partial_tw\|_{C_b\dot H_D^{-1/2}}\nonumber\\
			&\qquad\lesssim\delta_J+\|w^3\|_{L^{4/3}_{t,x}}
			=\delta_J+\|w\|_{L^4_{t,x}}^3
			\lesssim\delta_J+\delta_J^3\lesssim\delta_J,
		\end{align}
		where $\epsilon_0\leq1$ is assumed. Together with
		$\|w\|_{L^4_{t,x}}\leq2C\delta_J$, this proves
		\eqref{est:high frequency critical revised}.
		
		\noindent\textit{Step 3: Picard convergence at the $s$ level.}
		
		We prove convergence of the same Picard sequence in
		$C([-T_s,T_s];\dot H_D^s)$.
		
		First, the full linear estimate
		\eqref{est:high frequency critical linear revised}, applied to every
		iterate, and \eqref{est:high frequency iterate bound revised} imply
		\begin{align}\label{est:high frequency iterates critical spatial}
			\sup_{n\geq0}\sup_{t\in\R}
			\|w^{(n)}(t)\|_{\dot H_D^{1/2}}
			\lesssim\delta_J+R^3\lesssim\delta_J.
		\end{align}
		For $n=0$ this follows from the homogeneous estimate; for $n\geq1$
		use the source $(w^{(n-1)})^3$.
		By the radial embedding $\dot H_D^{1/2}\hookrightarrow L^4$, we have
		\begin{align}\label{est:high frequency iterates L4 spatial}
			\sup_{n\geq0}\sup_{t\in\R}\|w^{(n)}(t)\|_{L^4}
			\lesssim\delta_J.
		\end{align}
		This is a uniform-in-time spatial estimate; it is obtained from the
		critical Sobolev estimate, not from the spacetime $L^4$ bound alone.
		
		Since $\frac12\leq s\leq\frac34$, both $s$ and $1-s$ lie in
		$[\frac14,\frac34]$. Proposition~\ref{prop:radial Sob embedding} and
		its dual therefore give
		\begin{align}\label{est:high frequency source direct}
			\|a\|_{L^4}\lesssim_s\|a\|_{\dot H_D^s},
			\qquad
			\|F\|_{\dot H_D^{s-1}}\lesssim_s\|F\|_{L^{4/3}}
		\end{align}
		for radial functions. In particular, whenever $a,b$ also satisfy
		$\|a\|_{L^4}+\|b\|_{L^4}\leq C\delta_J$, H\"older's inequality gives
		\begin{align}\label{est:high frequency s cubic iteration}
			\|a^3\|_{\dot H_D^{s-1}}
			&\lesssim_s\|a\|_{L^4}^3
			\lesssim_s\delta_J^2\|a\|_{\dot H_D^s},\\
			\|a^3-b^3\|_{\dot H_D^{s-1}}
			&\lesssim_s(\|a\|_{L^4}^2+\|b\|_{L^4}^2)\|a-b\|_{L^4}
			\nonumber\\
			&\lesssim_s\delta_J^2\|a-b\|_{\dot H_D^s}.
			\label{est:high frequency s difference iteration}
		\end{align}
		The constants here include the fixed constant from
		\eqref{est:high frequency iterates L4 spatial}.
		
		The source is measured in $\dot H_D^{s-1}$ because the sine propagator
		has the spectral multiplier $\sin(t\lambda)/\lambda$:
		\begin{align}\label{est:high frequency s multipliers}
			\left\|\frac{\sin(t\sqrt{-\Delta_\Omega})}{\sqrt{-\Delta_\Omega}}F
			\right\|_{\dot H_D^s}
			&\leq\|F\|_{\dot H_D^{s-1}},\\
			\|\cos(t\sqrt{-\Delta_\Omega})F\|_{\dot H_D^{s-1}}
			&\leq\|F\|_{\dot H_D^{s-1}}.\nonumber
		\end{align}
		Thus, for the spectral energy estimates used here,
		it suffices to control the source in
		$L^1(I_s;\dot H_D^{s-1})$
		to obtain displacement in $C(I_s;\dot H_D^s)$
		and velocity in $C(I_s;\dot H_D^{s-1})$.
		
		\smallskip
		\noindent\textit{Uniform $s$-level bounds.}
		Let $I_s=[-T_s,T_s]$ and set
		\begin{align*}
			B_n=\|w^{(n)}\|_{C(I_s;\dot H_D^s)}.
		\end{align*}
		The free solution satisfies $B_0\leq M_{s,J}$.
		Inductively, if $w^{(n)}\in C(I_s;\dot H_D^s)$, its cubic source is
		continuous in $\dot H_D^{s-1}$ by the preceding estimates.
		Hence the next iterate belongs to the same displacement space, and
		\begin{align}\label{est:high frequency s induction}
			B_{n+1}
			&\leq M_{s,J}
			+\sup_{|t|\leq T_s}\int_{\min\{0,t\}}^{\max\{0,t\}}
			\|(w^{(n)}(\tau))^3\|_{\dot H_D^{s-1}}\,d\tau
			\nonumber\\
			&\leq M_{s,J}+K_sT_s\delta_J^2 B_n.
		\end{align}
		Choose $K_s\geq1$, depending only on $s$, large enough to dominate
		this constant and the difference constants below, and take
		\begin{align}\label{choice:high frequency s iteration time}
			T_s=\frac{1}{4K_s(1+\delta_J^2)}.
		\end{align}
		Then $K_sT_s\delta_J^2\leq1/4$.
		If $B_n\leq2M_{s,J}$, \eqref{est:high frequency s induction} gives
		$B_{n+1}\leq M_{s,J}+\frac14(2M_{s,J})\leq2M_{s,J}$.
		Since $B_0\leq M_{s,J}$, induction yields
		\begin{align}\label{est:high frequency s iterates uniform}
			B_n\leq2M_{s,J},\qquad n\geq0.
		\end{align}
		
		\smallskip
		\noindent\textit{Contraction in the $s$-level displacement norm.}
		Set
		\begin{align*}
			d_n=\|w^{(n+1)}-w^{(n)}\|_{C(I_s;\dot H_D^s)}.
		\end{align*}
		For $n\geq1$, consecutive iterates have the same initial data, so their
		homogeneous terms cancel. By
		\eqref{est:high frequency s difference iteration} and
		\eqref{est:high frequency s multipliers},
		\begin{align}\label{est:high frequency s contraction}
			d_n
			&\leq\sup_{|t|\leq T_s}\int_{\min\{0,t\}}^{\max\{0,t\}}
			\|(w^{(n)}(\tau))^3-(w^{(n-1)}(\tau))^3\|_{\dot H_D^{s-1}}\,d\tau
			\nonumber\\
			&\leq K_sT_s\delta_J^2d_{n-1}\leq\frac14d_{n-1}.
		\end{align}
		Consequently $d_n\leq4^{-n}d_0$, and the same Picard sequence converges
		in $C(I_s;\dot H_D^s)$.
		This contraction estimate is applied to the sequence already controlled
		by \eqref{est:high frequency iterates L4 spatial}; no critical bound is
		being inferred from an arbitrary $s$-level ball.
		
		For completeness, the derivatives of the iterates satisfy
		\begin{align*}
			\partial_tw^{(n+1)}(t)
			={}&-\sqrt{-\Delta_\Omega}\sin(t\sqrt{-\Delta_\Omega})w_0
			+\cos(t\sqrt{-\Delta_\Omega})w_1\\
			&-\int_0^t\cos((t-\tau)\sqrt{-\Delta_\Omega})
			(w^{(n)}(\tau))^3\,d\tau.
		\end{align*}
		Subtracting these formulas gives, for $n\geq1$,
		\begin{align*}
			\|\partial_tw^{(n+1)}-\partial_tw^{(n)}\|_{C(I_s;\dot H_D^{s-1})}
			\leq K_sT_s\delta_J^2d_{n-1}.
		\end{align*}
		The right side is summable. Therefore the pair converges in
		$C(I_s;\dot H_D^s\times\dot H_D^{s-1})$.
		The limits agree with the critical limit of Step 2 as distributions.
		The nonlinear difference estimate also permits passage to the limit
		in the $s$-level Duhamel formulas. Thus the global critical solution
		has the stated $s$-level regularity at least on $I_s$.
		
		\noindent\textit{Step 4: Global continuation at the $s$ level.}
		Extend the preceding $s$-level solution to its maximal interval using
		Proposition~\ref{prop:general s local Picard}.
		On each compact subinterval, it also belongs to the continuous critical
		class: its initial data are critical, and
		$w\in C_t\dot H_D^s\hookrightarrow C_tL^4$ implies
		$w^3\in C_tL^{4/3}\hookrightarrow C_t\dot H_D^{-1/2}$.
		The spectral Duhamel estimates then give the critical regularity there.
		Uniqueness in the critical local theory identifies this solution with
		the global critical solution constructed in Step 2.
		
		The critical bound and Proposition~\ref{prop:critical sqrt p} with $p=4,6$
		therefore imply
		\begin{align}\label{est:high frequency spatial revised}
			\sup_{t\in\R}\bigl(\|w(t)\|_{L^4}+\|w(t)\|_{L^6}\bigr)
			&\leq C(2+\sqrt6)\sup_{t\in\R}\|w(t)\|_{\dot H_D^{1/2}}
			\nonumber\\
			&\lesssim\delta_J.
		\end{align}
		Only the $L^4$ part is needed for continuation in the present range.
		Indeed, \eqref{est:high frequency source direct} gives
		\begin{align}\label{est:high frequency cubic source revised}
			\|w(t)^3\|_{\dot H_D^{s-1}}
			\lesssim_s\|w(t)^3\|_{L^{4/3}}
			=\|w(t)\|_{L^4}^3\lesssim_s\delta_J^3.
		\end{align}
		Using the $s$-level Duhamel formula and its time derivative, we obtain,
		throughout the maximal $s$-level interval,
		\begin{align}\label{est:high frequency s continuation}
			\|w(t)\|_{\dot H_D^s}+\|\partial_tw(t)\|_{\dot H_D^{s-1}}
			&\lesssim M_{s,J}
			+\int_{\min\{0,t\}}^{\max\{0,t\}}
			\|w(\tau)^3\|_{\dot H_D^{s-1}}\,d\tau
			\nonumber\\
			&\lesssim_s M_{s,J}+|t|\delta_J^3.
		\end{align}
		If a maximal endpoint were finite, this bound would keep the full
		$s$-level norm bounded up to that endpoint, contradicting the continuation
		criterion in Proposition~\ref{prop:general s local Picard}.
		Both endpoints are therefore infinite. This proves
		\eqref{reg:high frequency global revised} and
		\eqref{est:high frequency s revised}.
		
		\noindent\textit{Step 5: Uniqueness and continuous dependence.}
		We give the finite-interval argument explicitly. Fix $T>0$.
		For a pair $(a_0,a_1)$, write its $s$-level size as
		$\|a_0\|_{\dot H_D^s}+\|a_1\|_{\dot H_D^{s-1}}$.
		
		\smallskip
		\noindent\textit{Uniqueness in the continuous $s$-level mild class.}
		Let $w$ and $\widetilde w$ be two mild solutions with the same initial
		pair and with continuous $s$-level regularity on $[-T,T]$.
		Their continuity implies that there is a finite number $B\geq1$ such that
		\begin{align*}
			\sup_{|t|\leq T}
			\bigl(\|w(t)\|_{\dot H_D^s}
			+\|\widetilde w(t)\|_{\dot H_D^s}\bigr)\leq B.
		\end{align*}
		No small-critical-norm assumption is needed for this comparison.
		The nonlinear estimate from Proposition~\ref{prop:general s local Picard}
		therefore gives, for $|t|\leq T$,
		\begin{align}\label{est:high frequency uniqueness source detailed}
			\|w(t)^3-\widetilde w(t)^3\|_{\dot H_D^{s-1}}
			\leq C_sB^2\|w(t)-\widetilde w(t)\|_{\dot H_D^s}.
		\end{align}
		Choose $h>0$ with $h\leq1$ and $C_s hB^2\leq1/2$.
		On $[0,\min\{h,T\}]$, the homogeneous terms cancel, so subtraction of
		both Duhamel formulas and the sine multiplier bound yield
		\begin{align*}
			\|w-\widetilde w\|_{C([0,\min\{h,T\}];\dot H_D^s)}
			\leq C_s hB^2
			\|w-\widetilde w\|_{C([0,\min\{h,T\}];\dot H_D^s)}.
		\end{align*}
		Absorption gives equality of the displacements on this interval.
		The differentiated Duhamel formula then gives equality of the velocities.
		If $h<T$, the initial pairs at time $h$ consequently agree.
		Restart the same argument at $h$ and continue over intervals of length at
		most $h$. Since the bound $B$ is valid on all of $[0,T]$, the same $h$
		works at every step, and finitely many steps cover $[0,T]$.
		The identical argument with time reversed covers $[-T,0]$.
		Thus $w=\widetilde w$ and $\partial_tw=\partial_t\widetilde w$ on $[-T,T]$.
		Since $T$ is arbitrary, uniqueness holds globally in the stated mild class.
		
		\smallskip
		\noindent\textit{A common bound for nearby small-critical data.}
		Now allow the initial pairs to differ, and assume that
		\begin{align*}
			M_{s,J},\ \widetilde M_{s,J}\leq M_*,\qquad
			\delta_J,\ \widetilde\delta_J\leq\epsilon_0.
		\end{align*}
		The preceding construction supplies both global solutions.
		By \eqref{est:high frequency s revised}, there is a common bound
		\begin{align}\label{est:high frequency common bound detailed}
			&\sup_{|t|\leq T}\bigl(
			\|w(t)\|_{\dot H_D^s}+\|\partial_tw(t)\|_{\dot H_D^{s-1}}
			+\|\widetilde w(t)\|_{\dot H_D^s}
			+\|\partial_t\widetilde w(t)\|_{\dot H_D^{s-1}}\bigr)
			\leq B_T,\nonumber\\
			&\hspace{25mm} B_T=1+C_s(M_*+T\epsilon_0^3),
		\end{align}
		where $C_s$ is enlarged if necessary.
		
		\smallskip
		\noindent\textit{Lipschitz control on a common time partition.}
		Choose a constant $L_s\geq1$ large enough for the following pairwise
		Duhamel estimate, and let
		\begin{align*}
			h_T=\min\left\{1,\frac{1}{2L_sB_T^2}\right\}.
		\end{align*}
		Partition $[0,T]$ as
		$0=t_0<t_1<\cdots<t_N=T$, with $t_{k+1}-t_k\leq h_T$.
		The integer $N$ can be chosen in terms of $s,T,M_*$ only.
		For $I_k=[t_k,t_{k+1}]$, define
		\begin{align*}
			Q_k={}&\|w(t_k)-\widetilde w(t_k)\|_{\dot H_D^s}
			+\|\partial_tw(t_k)-\partial_t\widetilde w(t_k)\|_{\dot H_D^{s-1}},\\
			E_k={}&\|w-\widetilde w\|_{C(I_k;\dot H_D^s)}
			+\|\partial_tw-\partial_t\widetilde w\|_{C(I_k;\dot H_D^{s-1})}.
		\end{align*}
		Subtract the two Duhamel formulas with initial time $t_k$, and also
		subtract their time derivatives. The homogeneous displacement and
		velocity terms together are bounded by $2Q_k$.
		The nonlinear terms are bounded by
		$L_s(t_{k+1}-t_k)B_T^2E_k$, using the cubic difference estimate and
		\eqref{est:high frequency common bound detailed}.
		Consequently,
		\begin{align*}
			E_k\leq2Q_k+L_sh_TB_T^2E_k
			\leq2Q_k+\frac12E_k,
		\end{align*}
		so that
		\begin{align*}
			E_k\leq4Q_k,\qquad Q_{k+1}\leq E_k\leq4Q_k.
		\end{align*}
		Induction gives $Q_k\leq4^kQ_0$ and $E_k\leq4^{k+1}Q_0$.
		Applying the same argument backwards from zero yields
		\begin{align}\label{est:high frequency finite time stability detailed}
			&\|w-\widetilde w\|_{C([-T,T];\dot H_D^s)}
			+\|\partial_tw-\partial_t\widetilde w\|_{C([-T,T];\dot H_D^{s-1})}
			\nonumber\\
			&\qquad\leq C_{s,T,M_*}
			\bigl(\|w_0-\widetilde w_0\|_{\dot H_D^s}
			+\|w_1-\widetilde w_1\|_{\dot H_D^{s-1}}\bigr).
		\end{align}
		The constant is finite for every finite $T$.
		This proves Lipschitz dependence on each finite interval in the
		small-critical-data class with bounded $s$-level size.
		
		\smallskip
		\noindent\textit{Fixing the cutoff near the original initial pair.}
		For a fixed original pair $(u_0,u_1)$, Step 1 allows us to choose $J$
		so that $\delta_J\leq\epsilon_0/2$.
		Keep this $J$ fixed. For a nearby original pair
		$(\widetilde u_0,\widetilde u_1)$, use the same projections to define
		$(\widetilde w_0,\widetilde w_1)$.
		The spectral support estimate gives
		\begin{align*}
			&\|P^\Omega_{>2^J}(u_0-\widetilde u_0)\|_{\dot H_D^{1/2}}
			+\|P^\Omega_{>2^J}(u_1-\widetilde u_1)\|_{\dot H_D^{-1/2}}\\
			&\qquad\leq2^{J(1/2-s)}
			\bigl(\|u_0-\widetilde u_0\|_{\dot H_D^s}
			+\|u_1-\widetilde u_1\|_{\dot H_D^{s-1}}\bigr).
		\end{align*}
		This includes $s=1/2$, where the factor equals one.
		Thus a sufficiently small neighborhood of $(u_0,u_1)$ ensures that
		$\widetilde\delta_J\leq\epsilon_0$ for the same fixed $J$.
		The truncated $s$-level norms are bounded on this neighborhood, since
		$|1-\phi(\lambda/2^J)|\leq1$.
		The same bound also implies
		\begin{align*}
			\|w_0-\widetilde w_0\|_{\dot H_D^s}
			+\|w_1-\widetilde w_1\|_{\dot H_D^{s-1}}
			\leq\|u_0-\widetilde u_0\|_{\dot H_D^s}
			+\|u_1-\widetilde u_1\|_{\dot H_D^{s-1}}.
		\end{align*}
		Combining this with
		\eqref{est:high frequency finite time stability detailed} proves local
		Lipschitz dependence of the high-frequency solution on the original
		initial pair, with a fixed cutoff in that neighborhood.
		This completes the proof for the nonlinear high-frequency component.
	\end{proof}
	
	\begin{remark}
		The argument proves the high-frequency result for
		$\frac12\leq s\leq\frac34$.
		The local theory at $\frac14\leq s<\frac12$ does not supply the critical
		smallness required in Step 2 for general data.
		Although the initial pair is high-frequency localized, the nonlinear
		solution is not asserted to retain that spectral localization.
	\end{remark}
	
	\subsection{Global energy analysis for the low-frequency remainder}
	
	Let $\frac12\leq s\leq\frac34$ and let
	$(u_0,u_1)\in\dot H^s_{D,rad}(\Omega)\times\dot H^{s-1}_{D,rad}(\Omega)$.
	Fix $J$ as in Proposition~\ref{prop:high frequency global revised}, and
	let $w$ be the global solution with initial pair
	$(P^\Omega_{>2^J}u_0,P^\Omega_{>2^J}u_1)$.
	Set
	\begin{align}\label{data:low frequency revised}
		v_0=P^\Omega_{\leq2^J}u_0,\qquad
		v_1=P^\Omega_{\leq2^J}u_1.
	\end{align}
	The remainder equation associated with $u=v+w$ is
	\begin{equation}\label{eq:low frequency revised}
		\begin{cases}
			\partial_t^2v-\Delta_\Omega v+v^3=-3v^2w-3vw^2,
			&(t,x)\in\R\times\Omega,\\
			v(0)=v_0,\qquad\partial_tv(0)=v_1.
		\end{cases}
	\end{equation}
	There is no $w^3$ term in this equation, since $w$ solves
	$\partial_t^2w-\Delta_\Omega w+w^3=0$.
	Only the initial pair in \eqref{data:low frequency revised} is
	low-frequency localized; the solution $v$ is not assumed to retain
	this spectral localization.
	
	We use the nonhomogeneous space $H_D^1(\Omega)$ already defined in the
	Sobolev-space section. At this index it agrees with the usual zero-trace space:
	\begin{align}\label{def:low frequency H01 detailed}
		H_D^1(\Omega)=H_0^1(\Omega)
		=\overline{C_c^\infty(\Omega)}^{\,H^1(\Omega)},\qquad
		\|h\|_{H_D^1}^2=\|h\|_2^2+\|\nabla h\|_2^2.
	\end{align}
	Indeed, on $C_c^\infty(\Omega)$, the spectral norm
	$\|(I-\Delta_\Omega)^{1/2}h\|_2$ has exactly this squared value,
	and both spaces are obtained by completion.
	The homogeneous energy space controls only $\|\nabla v\|_2$.
	We use $H_D^1$ for the increment $h=v-v_0$; we require neither
	$v_0\in L^2$ nor $v\in L^2$.
	The physical energy remains the homogeneous energy $E(v)$ below;
	no $L^2$ term is added to that energy.
	
	The high-frequency result provides
	$w\in C_b(\R;\dot H_D^{1/2})$.
	The quantitative radial embedding consequently implies
	\begin{align}\label{est:low frequency background properties revised}
		w\in C(\R;L^p(\Omega)),\qquad
		\sup_{t\in\R}\|w(t)\|_p\leq C\sqrt p\,B,
		\quad 4\leq p<\infty,
	\end{align}
	where $B=\sup_t\|w(t)\|_{\dot H_D^{1/2}}$.
	Continuity follows by applying the same embedding to $w(t)-w(t_0)$.
	These are the background bounds used in this subsection.
	The local construction and global continuation below use spectral
	energy estimates and Sobolev embeddings. They do not invoke an additional
	Strichartz estimate; the earlier construction of $w$ supplies the stated
	background bounds.
	
	\begin{proposition}[Global well-posedness of the remainder]
		\label{prop:low frequency global revised}
		The initial pair in \eqref{data:low frequency revised} satisfies
		\begin{align}\label{reg:low frequency initial revised}
			v_0\in\dot H^1_{D,rad}(\Omega)\cap L^4(\Omega),\qquad
			\Delta_\Omega v_0\in L^2(\Omega),\qquad v_1\in L^2_{rad}(\Omega),
		\end{align}
		where $\Delta_\Omega v_0$ is understood spectrally.
		Equation~\eqref{eq:low frequency revised} has a unique global energy
		mild solution with
		\begin{align}\label{reg:low frequency energy revised}
			&v\in C(\R;\dot H^1_{D,rad}(\Omega)\cap L^4(\Omega)),
			\qquad\partial_tv\in C(\R;L^2_{rad}(\Omega)),\nonumber\\
			&v-v_0\in C(\R;H^1_{D,rad}(\Omega)).
		\end{align}
		Define
		\begin{align}\label{def:low frequency energy revised}
			E(v)(t)=\frac12\|\partial_tv(t)\|_{L^2}^2
			+\frac12\|\nabla v(t)\|_{L^2}^2
			+\frac14\|v(t)\|_{L^4}^4,
		\end{align}
		and let
		\begin{align*}
			B=\sup_{t\in\R}\|w(t)\|_{\dot H_D^{1/2}}.
		\end{align*}
		Then $E(v)$ is locally absolutely continuous and
		\begin{align}\label{est:low frequency logarithmic bound revised}
			\sqrt{\log(3+E(v)(t))}
			\leq\sqrt{\log(3+E(v)(0))}+C(B+B^2)|t|,
			\qquad t\in\R.
		\end{align}
		In addition,
		\begin{align}\label{reg:low frequency s revised}
			(v,\partial_tv)\in
			C(\R;\dot H^s_{D,rad}(\Omega)\times\dot H^{s-1}_{D,rad}(\Omega)).
		\end{align}
		The boundary condition for the energy solution is the zero Dirichlet trace.
	\end{proposition}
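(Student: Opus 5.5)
We prove the proposition in five steps: the regularity of the data, local existence in an affine energy space, the logarithmic energy inequality, continuation, and recovery of the $s$-level regularity.

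\emph{Step 1: the initial data.} We verify \eqref{reg:low frequency initial revised} directly on the spectral side. Since $\mathcal F_Dv_0=\phi(\lambda/2^J)\mathcal F_Du_0$ is supported in $\lambda\leq2^{J+1}$, and $s\leq1$, $s\leq2$, $s-1\leq0$, we have
\begin{align*}
\|v_0\|_{\dot H_D^1}+\|\Delta_\Omega v_0\|_2\lesssim_J\|u_0\|_{\dot H_D^s},\qquad \|v_1\|_2\lesssim_J\|u_1\|_{\dot H_D^{s-1}}.
\end{align*}
Proposition~\ref{prop:radial Sob embedding} with $s=1$, $p=4$ gives $v_0\in L^4$.

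\emph{Step 2: local existence.} Write $v=v_0+h$. The increment solves
\begin{align*}
\partial_t^2h-\Delta_\Omega h=\Delta_\Omega v_0-(v_0+h)^3-3(v_0+h)^2w-3(v_0+h)w^2,
\end{align*}
with data $(0,v_1)\in H_D^1\times L^2$. We run a contraction in $C([-T,T];H_D^1)\times C([-T,T];L^2)$ using Proposition~\ref{prop:linear local H1 revised}. The source must lie in $L^1_tL^2$. For this we use:
\begin{itemize}
\item the radial bound $\|f\|_{L^\infty}\lesssim\|f\|_{\dot H_D^1}$, which follows from the sine-kernel Cauchy--Schwarz bound as in Proposition~\ref{prop:critical sqrt p}, since $|h(\rho)|\le\sqrt\rho\,\|h'\|_2$;
\item the $L^6$ embedding of $\dot H^1_D$;
\item the background bounds $\|w\|_{L^6}+\|w\|_{L^4}\lesssim B$ from \eqref{est:low frequency background properties revised}.
\end{itemize}
With these, $\|v^3\|_2\le\|v\|_6^3$, $\|v^2w\|_2\le\|v\|_\infty\|v\|_6\|w\|_6$ and $\|vw^2\|_2\le\|v\|_\infty\|w\|_4^2$. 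All three are locally Lipschitz in $h$ because $\|v\|_{\dot H^1_D}\le\|v_0\|_{\dot H^1_D}+\|h\|_{H^1_D}$. This gives local well-posedness, uniqueness, and a blow-up criterion in terms of $\|\nabla v\|_2+\|\partial_tv\|_2$. The term $\|h\|_2$ grows at most linearly in time by Proposition~\ref{prop:linear local H1 revised}, so it cannot blow up on its own. Continuity of $v$ in $L^4$ follows from the $\dot H^1_D\cap L^2$ control of $h$ together with interpolation.

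\emph{Step 3: the energy identity and the logarithmic inequality.} First we regularize: smooth the data with $P^\Omega_{\le N}$ and mollify $w$ in time and space, and pass to the limit using the Lipschitz dependence from Step 2. This gives
\begin{align*}
E(v)(t)-E(v)(0)=-\int_0^t\int_\Omega(3v^2w+3vw^2)\,\partial_tv\,dx\,d\tau .
\end{align*}
In particular $E(v)$ is absolutely continuous. For $p\ge6$ we bound the integrand by
\begin{align*}
\|\partial_tv\|_2\bigl(\|v^2w\|_2+\|vw^2\|_2\bigr).
\end{align*}

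For the quadratic term, Hölder with exponents $\frac1{q}+\frac1p=\frac12$, $q=\frac{2p}{p-2}$, gives $\|v^2w\|_2\le\|v\|_{2q}^2\|w\|_p$, and $\|w\|_p\le C\sqrt p\,B$ by Proposition~\ref{prop:critical sqrt p}. We then interpolate $L^{2q}$ between $L^4$ and $L^\infty$, using $\|v\|_4\lesssim E^{1/4}$ and $\|v\|_\infty\lesssim E^{1/2}$. This yields $\|v\|_{2q}^2\lesssim E^{1/2+1/p}$, with the $p$-dependence of the exponents tracked carefully. Hence the quadratic term contributes $\sqrt p\,B\,E^{1+1/p}$.

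For the linear term, $\|vw^2\|_2\le\|v\|_\infty\|w\|_4^2\lesssim E^{1/2}B^2$, which contributes $B^2E$.

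Altogether,
\begin{align*}
|E'|\lesssim B\sqrt p\,E^{1+1/p}+B^2E .
\end{align*}
Choose $p=\max\{6,\log(3+E)\}$. Then $E^{1/p}\le e$, so
\begin{align*}
|E'|\lesssim (B+B^2)\,(3+E)\sqrt{\log(3+E)} .
\end{align*}
Setting $Y=\sqrt{\log(3+E)}$, this reads $|Y'|\lesssim B+B^2$. Integrating gives \eqref{est:low frequency logarithmic bound revised}.

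\emph{Step 4: global existence.} By Step 3 the energy stays finite on every bounded time interval. Combined with the blow-up criterion from Step 2, this gives a global solution with the regularity \eqref{reg:low frequency energy revised}.

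\emph{Step 5: $s$-level regularity \eqref{reg:low frequency s revised}.} We apply Proposition~\ref{prop:linear spectral energy revised} with source $F=-v^3-3v^2w-3vw^2$. The dual embedding $\|F\|_{\dot H^{s-1}_D}\lesssim\|F\|_{L^{4/3}}$ and the bounds $v,w\in C_tL^4$ give $F\in L^1_{loc}\dot H^{s-1}_D$. The data satisfy $(v_0,v_1)\in\dot H^s_D\times\dot H^{s-1}_D$ since $P^\Omega_{\le2^J}$ is a contraction on these spaces. Continuity in time follows from the same proposition.

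\emph{Main obstacle.} The delicate part is the rigorous justification of the energy identity for this rough, affine-space solution: $w$ is only critical, and $v\notin L^2$. The regularization must be set up so that the nonlinear terms converge in $L^1_tL^2$. The second point needing care is the exponent bookkeeping that produces exactly $E^{1+1/p}$, so that the Yudovich-type choice of $p$ closes.
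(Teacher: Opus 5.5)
Your architecture is the paper's. You solve for the affine increment $h=v-v_0$ in $H^1_D\times L^2$. You bound $|E'|\lesssim B\sqrt p\,E^{1+1/p}+B^2E$ using Proposition~\ref{prop:critical sqrt p} and $L^4$--$L^\infty$ interpolation; your exponent bookkeeping is right, and $\|vw^2\|_2\le\|v\|_\infty\|w\|_4^2$ is a valid substitute for the paper's $\|v\|_6\|w\|_6^2$. You choose $p=\max\{6,\log(3+E)\}$, continue using $\|h(t)\|_2\le\int_0^t\|\partial_tv\|_2\,d\tau$, and recover the $s$-level regularity through $\|F\|_{\dot H^{s-1}_D}\lesssim\|F\|_{L^{4/3}}$.

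The real difference is how the energy identity is justified. The paper regularizes the solution, not the problem. It applies the band projection $Q_n=\mathbf 1_{[1/n,n]}(\sqrt{-\Delta_\Omega})$ to $v$ itself, which puts $Q_nv$ in $L^2$ and in the operator domain even though $v\notin L^2$. It then differentiates the quadratic energy of $Q_nv$ and passes to the limit using uniform strong convergence on the compact ranges of $t\mapsto G(t),\partial_tv(t)$. The quartic term is handled separately by time mollification and the chain rule for $f\mapsto\frac14\int f^4$ on $L^4$. No auxiliary solutions are built; only $G\in C_tL^2$, $v\in C_t(L^4\cap L^6)$ and $\partial_tv\in C_tL^2$ are used.

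Your approximate-problem route also works, but it needs two things your Step 2 does not supply: $H^2$-level regularity of the approximate solutions, and continuous dependence on $w$ (Step 2 only gives dependence on the data for fixed $w$). Both are obtainable. Take $w_N=P^\Omega_{\le N}w$, which converges to $w$ in $C([a,b];L^4\cap L^6)$. The radial bound $\|v\|_\infty\lesssim\|v\|_{\dot H^1_D}$ then puts the approximate source in $C_tH^1_0$, which gives $\partial_tv^N\in C_tH^1_0$ and $\Delta_\Omega v^N\in C_tL^2$ directly. The source difference is controlled by $\|w-w_N\|_6$ times energy quantities, and a bootstrap over $[a,b]$ gives convergence. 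Smoothing the data is vacuous, since $P^\Omega_{\le N}v_j=v_j$ for $N\ge2^{J+1}$. Mollifying $w$ in time is unnecessary, since no $\partial_tw$ appears.

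Three points need correcting.

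\emph{First}, Proposition~\ref{prop:radial Sob embedding} does not apply with $s=1$, $p=4$: for $p=4$ it requires $\frac14\le s\le\frac34$. In fact $\dot H^1_{D,rad}(\Omega)$ does not embed into $L^4(\Omega)$; a profile like $r^{-1/2-\epsilon}$ at infinity, with $0<\epsilon\le\frac14$, has finite Dirichlet energy but infinite $L^4$ norm. This is the same low-frequency defect that forces the affine formulation. The $L^4$ bound for $v_0$, and hence finiteness of $E(v)(0)$, must come from the $s$-level norm: $\|v_0\|_4\lesssim_s\|v_0\|_{\dot H^s_D}\le\|u_0\|_{\dot H^s_D}$.

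\emph{Second}, $\|v^2w\|_2\le\|v\|_\infty\|v\|_6\|w\|_6$ is false on the infinite-measure domain $\Omega$, because the exponents sum to $\frac13$ rather than $\frac12$. Use $\|v\|_6^2\|w\|_6$ instead.

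\emph{Third}, in Step 5 you apply Proposition~\ref{prop:linear spectral energy revised} as though $v$ satisfied the Duhamel formula with data $(v_0,v_1)$ and source $F$. Your $v$ is actually defined through the increment formula with the constant forcing $\Delta_\Omega v_0$. You must identify the two via $v_0+\int_0^t\frac{\sin((t-\tau)\sqrt{-\Delta_\Omega})}{\sqrt{-\Delta_\Omega}}\Delta_\Omega v_0\,d\tau=\cos(t\sqrt{-\Delta_\Omega})v_0$. On spectral annuli this is the scalar identity $1-\lambda\int_0^t\sin((t-\tau)\lambda)\,d\tau=\cos(t\lambda)$, and one then removes the cutoffs. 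Only after this identification does the $s$-level estimate apply to the energy solution.
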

	
	\begin{proof}
		All functions are radial, and spatial norms are taken over $\Omega$.
		
		\noindent\textit{Step 1: Detailed estimates for the low-frequency data.}
		By definition,
		\begin{align*}
			\mathcal F_Dv_j(\lambda)
			=\phi(\lambda/2^J)\mathcal F_Du_j(\lambda),\qquad j=0,1.
		\end{align*}
		The multiplier has absolute value at most one and vanishes for
		$\lambda\geq2^{J+1}$.
		On its support, since $s\leq1$,
		\begin{align*}
			\lambda^2&\leq2^{2(J+1)(1-s)}\lambda^{2s},\\
			\lambda^4&\leq2^{2(J+1)(2-s)}\lambda^{2s},\\
			1&\leq2^{2(J+1)(1-s)}\lambda^{2(s-1)}.
		\end{align*}
		The last inequality is applied for $\lambda>0$.
		The spectral norm formulas consequently give
		\begin{align*}
			\|v_0\|_{\dot H_D^1}^2
			&=4\pi\int_0^{2^{J+1}}\lambda^2
			|\phi(\lambda/2^J)\mathcal F_Du_0(\lambda)|^2\,d\lambda\\
			&\leq2^{2(J+1)(1-s)}\|u_0\|_{\dot H_D^s}^2,\\
			\|\Delta_\Omega v_0\|_2^2
			&=4\pi\int_0^{2^{J+1}}\lambda^4
			|\phi(\lambda/2^J)\mathcal F_Du_0(\lambda)|^2\,d\lambda\\
			&\leq2^{2(J+1)(2-s)}\|u_0\|_{\dot H_D^s}^2,\\
			\|v_1\|_2^2
			&=4\pi\int_0^{2^{J+1}}
			|\phi(\lambda/2^J)\mathcal F_Du_1(\lambda)|^2\,d\lambda\\
			&\leq2^{2(J+1)(1-s)}\|u_1\|_{\dot H_D^{s-1}}^2.
		\end{align*}
		Here $\Delta_\Omega v_0$ denotes the spectral expression with transform
		$-\lambda^2\phi(\lambda/2^J)\mathcal F_Du_0(\lambda)$, which the estimate
		shows to be in $L^2$. This does not require $v_0$ to lie in the
		$L^2$ operator domain of $\Delta_\Omega$.
		Taking square roots yields
		\begin{align}\label{est:low frequency initial revised}
			\|v_0\|_{\dot H_D^1}
			&\leq2^{(J+1)(1-s)}\|u_0\|_{\dot H_D^s},\nonumber\\
			\|\Delta_\Omega v_0\|_2
			&\leq2^{(J+1)(2-s)}\|u_0\|_{\dot H_D^s},\nonumber\\
			\|v_1\|_2
			&\leq2^{(J+1)(1-s)}\|u_1\|_{\dot H_D^{s-1}}.
		\end{align}
		
		The homogeneous energy Sobolev inequality gives
		\begin{align}\label{est:low frequency initial L6 detailed}
			\|v_0\|_{L^6}
			\lesssim\|\nabla v_0\|_2=\|v_0\|_{\dot H_D^1}
			\lesssim2^{(J+1)(1-s)}\|u_0\|_{\dot H_D^s}.
		\end{align}
		This inequality follows by zero extension and the whole-space
		three-dimensional Sobolev inequality for smooth compactly supported
		functions, followed by completion in the gradient norm.
		Proposition~\ref{prop:radial Sob embedding} also gives
		\begin{align*}
			\|v_0\|_4\lesssim_s\|v_0\|_{\dot H_D^s}
			\leq\|u_0\|_{\dot H_D^s}.
		\end{align*}
		Thus the initial energy is finite. With
		$M_s=\|u_0\|_{\dot H_D^s}+\|u_1\|_{\dot H_D^{s-1}}$,
		\begin{align}\label{est:low frequency initial energy revised}
			E(v)(0)\lesssim_s2^{2(J+1)(1-s)}M_s^2+M_s^4.
		\end{align}
		Finite energy does not imply $v_0\in L^2$, since its definition contains
		no $\|v_0\|_2^2$ term. We impose no such additional assumption.
		
		\noindent\textit{Step 2: Local construction of the increment by energy estimates.}
		Set $h=v-v_0$. Then $h(0)=0$, $\partial_th(0)=v_1$, and
		\begin{equation}\label{eq:low frequency affine revised}
			\begin{cases}
				\partial_t^2h-\Delta_\Omega h
				=\Delta_\Omega v_0-\mathcal N(t,h),\\
				h(0)=0,\qquad\partial_th(0)=v_1,
			\end{cases}
		\end{equation}
		where
		\begin{align*}
			\mathcal N(t,h)
			=(v_0+h)^3+3(v_0+h)^2w(t)+3(v_0+h)w(t)^2.
		\end{align*}
		The unknown $h$ is the increment of $v$ relative to its fixed initial
		profile. This auxiliary substitution does not change the nonlinear
		high--low decomposition.
		
		\smallskip
		\noindent\textit{The source takes values in $L^2$.}
		The critical solution satisfies $w\in C(\R;L^6)$ by
		Proposition~\ref{prop:critical sqrt p}.
		For $h\in H_D^1$, we have $v_0+h\in L^6$ by
		\eqref{est:low frequency initial L6 detailed} and $H_D^1\hookrightarrow L^6$.
		H\"older's inequality gives
		\begin{align*}
			\|(v_0+h)^3\|_2&=\|v_0+h\|_6^3,\\
			\|(v_0+h)^2w\|_2&\leq\|v_0+h\|_6^2\|w\|_6,\\
			\|(v_0+h)w^2\|_2&\leq\|v_0+h\|_6\|w\|_6^2.
		\end{align*}
		On $[0,1]$, let $W=\sup_{0\leq t\leq1}\|w(t)\|_6$.
		For $\|h\|_{H_D^1},\|\widetilde h\|_{H_D^1}\leq R$, polynomial
		factorization yields
		\begin{align}\label{est:low frequency local source revised}
			\|\mathcal N(t,h)\|_2
			&\lesssim(\|v_0\|_6+R+W)^3,\nonumber\\
			\|\mathcal N(t,h)-\mathcal N(t,\widetilde h)\|_2
			&\lesssim(\|v_0\|_6+R+W)^2\|h-\widetilde h\|_{H_D^1}.
		\end{align}
		The same bounds show continuity of the source in time.
		
		\smallskip
		\noindent\textit{The linear energy estimate includes an $L^2$ bound.}
		For $F\in L^2$ and $|t|\leq1$, spectral calculus gives
		\begin{align*}
			\left\|\frac{\sin(t\sqrt{-\Delta_\Omega})}{\sqrt{-\Delta_\Omega}}F
			\right\|_2&\leq |t|\|F\|_2,\\
			\left\|\frac{\sin(t\sqrt{-\Delta_\Omega})}{\sqrt{-\Delta_\Omega}}F
			\right\|_{\dot H_D^1}&\leq\|F\|_2,\\
			\|\cos(t\sqrt{-\Delta_\Omega})F\|_2&\leq\|F\|_2.
		\end{align*}
		In particular, combining the first two bounds gives the explicit
		nonhomogeneous estimate
		\begin{align*}
			\left\|\frac{\sin(t\sqrt{-\Delta_\Omega})}{\sqrt{-\Delta_\Omega}}F
			\right\|_{H_D^1}^2
			&=4\pi\int_0^\infty(1+\lambda^2)
			\frac{|\sin(t\lambda)|^2}{\lambda^2}
			|\mathcal F_DF(\lambda)|^2\,d\lambda\\
			&\leq(1+t^2)\|F\|_2^2.
		\end{align*}
		The two contributions use $|\sin(t\lambda)|/\lambda\leq|t|$
		and $|\sin(t\lambda)|\leq1$, respectively.
		This bound controls the $L^2$ norm of the increment as well as its gradient.
		Together with the corresponding bounds for the homogeneous terms, they
		imply, for $0<T\leq1$ and $z_{tt}-\Delta_\Omega z=F$,
		\begin{align}\label{est:low frequency affine linear revised}
			&\|z\|_{C([0,T];H_D^1)}
			+\|\partial_tz\|_{C([0,T];L^2)}\nonumber\\
			&\qquad\leq C\left(\|z(0)\|_{H_D^1}+\|\partial_tz(0)\|_2
			+\int_0^T\|F(\tau)\|_2\,d\tau\right).
		\end{align}
		The propagators are strongly continuous in the indicated spaces.
		
		\smallskip
		\noindent\textit{Picard iteration and the choice of time.}
		Define
		\begin{align}\label{eq:low frequency increment Picard detailed}
			h^{(0)}(t)&=\frac{\sin(t\sqrt{-\Delta_\Omega})}{\sqrt{-\Delta_\Omega}}v_1,
			\nonumber\\
			h^{(n+1)}(t)&=h^{(0)}(t)
			+\int_0^t\frac{\sin((t-\tau)\sqrt{-\Delta_\Omega})}
			{\sqrt{-\Delta_\Omega}}
			\bigl(\Delta_\Omega v_0-\mathcal N(\tau,h^{(n)}(\tau))\bigr)\,d\tau.
		\end{align}
		Choose a fixed constant $K\geq1$ large enough to dominate the constants
		in the preceding linear and nonlinear estimates, and set
		\begin{align*}
			R=2K\|v_1\|_2+1,\qquad L=1+\|v_0\|_6+R+W.
		\end{align*}
		Take
		\begin{align}\label{choice:low frequency local time detailed}
			T=\min\left\{1,
			\frac{R}{4K(1+\|\Delta_\Omega v_0\|_2+L^3)},
			\frac1{4KL^2}\right\}>0.
		\end{align}
		Work in the complete closed ball
		\begin{align*}
			X=\{h\in C([0,T];H^1_{D,rad}(\Omega)):
			\|h\|_{C_tH_D^1}\leq R\}.
		\end{align*}
		For an iterate in this ball,
		\begin{align*}
			\|h^{(n+1)}\|_{C_tH_D^1}
			&\leq K\|v_1\|_2+KT(\|\Delta_\Omega v_0\|_2+L^3)\\
			&\leq R/2+R/4<R.
		\end{align*}
		Since $\|h^{(0)}\|_{C_tH_D^1}\leq K\|v_1\|_2<R$, every iterate remains
		in the same ball. For $n\geq1$, subtraction of consecutive formulas gives
		\begin{align*}
			\|h^{(n+1)}-h^{(n)}\|_{C_tH_D^1}
			\leq KTL^2\|h^{(n)}-h^{(n-1)}\|_{C_tH_D^1}
			\leq\tfrac14\|h^{(n)}-h^{(n-1)}\|_{C_tH_D^1}.
		\end{align*}
		Hence the iterates converge geometrically in $C([0,T];H_D^1)$.
		
		Their time derivatives satisfy
		\begin{align*}
			\partial_th^{(n+1)}(t)
			={}&\cos(t\sqrt{-\Delta_\Omega})v_1\\
			&+\int_0^t\cos((t-\tau)\sqrt{-\Delta_\Omega})
			\bigl(\Delta_\Omega v_0-\mathcal N(\tau,h^{(n)}(\tau))\bigr)\,d\tau.
		\end{align*}
		For $n\geq1$, their successive differences are bounded in $C_tL^2$
		by $KTL^2\|h^{(n)}-h^{(n-1)}\|_{C_tH_D^1}$ and are therefore summable.
		The limits give
		\begin{align}\label{reg:low frequency increment detailed}
			(h,\partial_th)\in C([0,T];H^1_{D,rad}(\Omega)\times L^2_{rad}(\Omega)).
		\end{align}
		The convergence of the source terms permits passage to the limit
		in the Duhamel formula and the corresponding time-derivative formula.
		The limit of the time derivatives is the distributional time
		derivative of $h$, and the initial conditions are attained.
		For the prescribed function $w$ and the fixed profile $v_0$,
		the same difference estimates give local uniqueness.
		They also yield locally Lipschitz dependence on the initial
		state $(h(0),\partial_t h(0))$ in $H_D^1\times L^2$,
		on a common sufficiently short interval for initial states
		in a bounded set.
		
		Since $v=v_0+h$, $v_0\in\dot H_D^1\cap L^4$, and
		$H_D^1\hookrightarrow L^4\cap L^6$, we obtain the local regularity in
		\eqref{reg:low frequency energy revised}.
		In particular, $\partial_tv=\partial_th\in C_tL^2$.
		The identity
		\begin{align*}
			h(t)=\int_0^t\partial_tv(\tau)\,d\tau\quad\text{in }L^2
		\end{align*}
		then explains why the increment is square integrable even though the
		fixed profile $v_0$ need not be. This identity is a consequence of the
		construction, not an additional assumption on $v_0$.
		The backward construction is identical.
		
		At an arbitrary restart time $t_0$, replace the free term in
		\eqref{eq:low frequency increment Picard detailed} by
		\begin{align*}
			\cos((t-t_0)\sqrt{-\Delta_\Omega})h(t_0)
			+\frac{\sin((t-t_0)\sqrt{-\Delta_\Omega})}{\sqrt{-\Delta_\Omega}}
			\partial_th(t_0),
		\end{align*}
		keeping the same fixed profile $v_0$ in the source.
		The same construction applies to any pair in $H_D^1\times L^2$.
		On a fixed finite time interval, bounds on this pair, on
		$\|v_0\|_6$, $\|\Delta_\Omega v_0\|_2$, and on $\|w\|_{L^\infty_tL^6_x}$
		give a common positive restart time. This is the continuation principle
		used in Step 5.
		
		\medskip
		\noindent\textit{Step 3: The energy identity and its justification.}
		Let $I$ be the local existence interval. By the local theory,
		\begin{align*}
			v&\in C\bigl(I;\dot H_D^1(\Omega)\cap L^4(\Omega)\bigr),\\
			\partial_tv&\in C\bigl(I;L^2(\Omega)\bigr).
		\end{align*}
		The Sobolev embedding
		$\dot H_D^1(\Omega)\hookrightarrow L^6(\Omega)$ gives
		$v\in C(I;L^6(\Omega))$. We also have
		$w\in C(I;L^6(\Omega))$.
		Put
		\begin{align*}
			G=-v^3-3v^2w-3vw^2.
		\end{align*}
		H\"older's inequality gives
		\begin{align*}
			\|G(t)\|_2
			&\leq \|v(t)\|_6^3
			+3\|v(t)\|_6^2\|w(t)\|_6
			+3\|v(t)\|_6\|w(t)\|_6^2.
		\end{align*}
		The same product estimates applied to differences imply that
		$G\in C(I;L^2(\Omega))$. Moreover,
		\begin{align*}
			\partial_t^2v-\Delta_\Omega v=G
		\end{align*}
		in the distributional sense.
		
		We claim that $E(v)\in W^{1,1}_{\mathrm{loc}}(I)$ and
		\begin{align}\label{id:low frequency energy revised}
			\frac{d}{dt}E(v)(t)
			=-3\int_\Omega\partial_tv\,v^2w\,dx
			-3\int_\Omega\partial_tv\,vw^2\,dx
		\end{align}
		for almost every $t\in I$.
		Here $W^{1,1}_{\mathrm{loc}}(I)$ denotes scalar functions whose
		distributional time derivative, as well as the function itself,
		is integrable on every compact subinterval of $I$.
		Such functions have locally absolutely continuous
		representatives. Since $E(v)$ is continuous by the local theory,
		its continuous representative is the one used in this identity.
		To justify this identity at the available regularity, fix an arbitrary
		compact interval $[a,b]\subset I$ and treat the quadratic and potential
		parts separately.
		
		\noindent\textit{The quadratic part.}
		Since $v$ need not belong to $L^2(\Omega)$, we first apply a spectral
		cutoff away from both zero and infinity. For each integer $n\geq2$,
		define the spectral projection by the bounded Borel functional calculus:
		
		\begin{align*}
			Q_n=\mathbf1_{[1/n,n]}(\sqrt{-\Delta_\Omega}),
			\qquad v_n=Q_nv.
		\end{align*}
		Here $\mathbf1_{[1/n,n]}$ is the characteristic function of
		$[1/n,n]$, so $Q_n$ keeps precisely the spectral frequencies
		$1/n\leq\lambda\leq n$. On $L^2(\Omega)$, it is an orthogonal
		projection. Its definition uses the spectral theorem and does
		not require the multiplier to be smooth.
		In terms of the distorted Fourier transform, this means
		\begin{align*}
			\mathcal F_Dv_n(t,\lambda)
			=\mathbf1_{[1/n,n]}(\lambda)\mathcal F_Dv(t,\lambda).
		\end{align*}
		The definition makes sense for $v\in\dot H_D^1(\Omega)$ without
		requiring $v\in L^2(\Omega)$. Indeed,
		\begin{align*}
			\|v_n(t)\|_2^2
			&=4\pi\int_{1/n}^n
			|\mathcal F_Dv(t,\lambda)|^2\,d\lambda\\
			&\leq n^2\,4\pi\int_0^\infty
			\lambda^2|\mathcal F_Dv(t,\lambda)|^2\,d\lambda\\
			&=n^2\|v(t)\|_{\dot H_D^1}^2.
		\end{align*}
		Similarly,
		\begin{align*}
			\|(-\Delta_\Omega)v_n(t)\|_2^2
			&=4\pi\int_{1/n}^n
			\lambda^4|\mathcal F_Dv(t,\lambda)|^2\,d\lambda\\
			&\leq n^2\,4\pi\int_0^\infty
			\lambda^2|\mathcal F_Dv(t,\lambda)|^2\,d\lambda\\
			&=n^2\|v(t)\|_{\dot H_D^1}^2.
		\end{align*}
		Thus $v_n(t)$ belongs to the $L^2$ operator domain of
		$-\Delta_\Omega$.
		
		Applying $Q_n$ to the mild equation yields
		\begin{align*}
			\partial_t^2v_n-\Delta_\Omega v_n=Q_nG,
			\qquad
			\partial_tv_n=Q_n\partial_tv.
		\end{align*}
		For each fixed $n$, all spectral multipliers involved are bounded
		on the spectral subspace corresponding to $[1/n,n]$. Consequently,
		$v_n$ has sufficient time and spatial regularity to differentiate
		its quadratic energy. Using self-adjointness of $-\Delta_\Omega$,
		we obtain
		\begin{align*}
			&\frac{d}{dt}
			\left(
			\frac12\|\partial_tv_n(t)\|_2^2
			+\frac12\|\sqrt{-\Delta_\Omega}\,v_n(t)\|_2^2
			\right)\\
			&\qquad=
			\left\langle
			\partial_t^2v_n(t)+(-\Delta_\Omega)v_n(t),
			\partial_tv_n(t)
			\right\rangle_{L^2}\\
			&\qquad=
			\left\langle Q_nG(t),Q_n\partial_tv(t)\right\rangle_{L^2}.
		\end{align*}
		After integration over $[a,b]$, this gives
		\begin{align}\label{id:quadratic cutoff detailed}
			&\frac12\|\partial_tv_n(b)\|_2^2
			+\frac12\|\sqrt{-\Delta_\Omega}\,v_n(b)\|_2^2
			\nonumber\\
			&\quad-
			\frac12\|\partial_tv_n(a)\|_2^2
			-\frac12\|\sqrt{-\Delta_\Omega}\,v_n(a)\|_2^2
			\nonumber\\
			&\qquad=
			\int_a^b
			\left\langle Q_nG(t),Q_n\partial_tv(t)\right\rangle_{L^2}\,dt.
		\end{align}
		
		The spectral projections $Q_n$ are contractions on $L^2(\Omega)$
		and converge strongly to the identity there.
		Since spectral multipliers commute,
		\begin{align*}
			\sqrt{-\Delta_\Omega}\,v_n
			=Q_n\sqrt{-\Delta_\Omega}\,v.
		\end{align*}
		Hence
		\begin{align*}
			\sqrt{-\Delta_\Omega}\,v_n
			&\longrightarrow\sqrt{-\Delta_\Omega}\,v,\\
			\partial_tv_n&\longrightarrow\partial_tv,
			\qquad
			Q_nG\longrightarrow G
		\end{align*}
		in $C([a,b];L^2(\Omega))$.
		
		To explain the uniformity in time, let
		$z\in C([a,b];L^2(\Omega))$. Its image is compact in $L^2(\Omega)$.
		Given $\delta>0$, choose finitely many points $z_1,\ldots,z_m$
		such that each $z(t)$ is within distance $\delta$ of one of these
		points. Then
		\begin{align*}
			\sup_{t\in[a,b]}\|(Q_n-I)z(t)\|_2
			\leq 2\delta
			+\max_{1\leq j\leq m}\|(Q_n-I)z_j\|_2.
		\end{align*}
		Strong convergence on this finite set, followed by $\delta\to0$,
		proves the asserted uniform convergence.
		
		In particular,
		\begin{align*}
			&\left|
			\int_a^b
			\left(
			\langle Q_nG,Q_n\partial_tv\rangle_{L^2}
			-\langle G,\partial_tv\rangle_{L^2}
			\right)dt
			\right|\\
			&\quad\leq
			(b-a)\|Q_nG-G\|_{C_tL^2}\|\partial_tv\|_{C_tL^2}\\
			&\qquad+
			(b-a)\|G\|_{C_tL^2}
			\|Q_n\partial_tv-\partial_tv\|_{C_tL^2}
			\longrightarrow0.
		\end{align*}
		Passing to the limit in \eqref{id:quadratic cutoff detailed} and using
		\begin{align*}
			\|\sqrt{-\Delta_\Omega}\,v(t)\|_2=\|\nabla v(t)\|_2,
		\end{align*}
		we conclude that
		\begin{align}\label{id:quadratic integrated detailed}
			&\frac12\|\partial_tv(b)\|_2^2
			+\frac12\|\nabla v(b)\|_2^2
			-\frac12\|\partial_tv(a)\|_2^2
			-\frac12\|\nabla v(a)\|_2^2
			\nonumber\\
			&\qquad=
			\int_a^b\int_\Omega G\,\partial_tv\,dx\,dt.
		\end{align}
		
		\medskip
		\noindent\textit{The potential part.}
		We justify the chain rule for the potential energy by time
		mollification. Choose $d>0$ such that
		\begin{align*}
			[a-d,b+d]\subset I.
		\end{align*}
		Let $\eta\in C_c^\infty((-1,1))$ satisfy
		\begin{align*}
			\eta\geq0,\qquad
			\int_{\R}\eta(\tau)\,d\tau=1,
		\end{align*}
		and define
		\begin{align*}
			\eta_\varepsilon(\tau)
			=\frac1\varepsilon\eta\left(\frac{\tau}{\varepsilon}\right).
		\end{align*}
		For $0<\varepsilon<d$ and $t\in[a,b]$, set
		\begin{align*}
			v_\varepsilon(t)
			=\int_{-\varepsilon}^{\varepsilon}\eta_\varepsilon(\tau)v(t-\tau)\,d\tau.
		\end{align*}
		Since $\eta_\varepsilon$ is supported in $[-\varepsilon,\varepsilon]$,
		this integral uses only values of $v$ in the larger interval
		$[a-d,b+d]$. It is a well-defined Banach-space integral in
		$L^4(\Omega)\cap L^6(\Omega)$.
		
		Differentiating the convolution kernel gives
		\begin{align*}
			\partial_tv_\varepsilon(t)
			=\int_{a-d}^{b+d}\eta_\varepsilon'(t-\tau)v(\tau)\,d\tau,
		\end{align*}
		The kernel vanishes near the endpoints of the larger interval
		for $t$ in a neighborhood of $[a,b]$.
		Higher time derivatives are obtained in the same way.
		Thus $v_\varepsilon$ is smooth in time, locally near $[a,b]$,
		with values in $L^4(\Omega)\cap L^6(\Omega)$.
		
		On the other hand, convolution commutes with distributional
		differentiation, so
		\begin{align*}
			\partial_tv_\varepsilon(t)
			=\int_{-\varepsilon}^{\varepsilon}\eta_\varepsilon(\tau)
			\partial_tv(t-\tau)\,d\tau
		\end{align*}
		as an $L^2(\Omega)$-valued identity.
		This identity does not require $v\in L^2(\Omega)$.
		Indeed, in the local construction $v=v_0+h$, with
		$h\in C_tH_D^1$ and $\partial_th=\partial_tv\in C_tL^2$;
		the time-independent term $v_0$ disappears upon differentiation.
		
		Since $\int_{\R}\eta_\varepsilon=1$, for $r=4,6$ we have
		\begin{align*}
			v_\varepsilon(t)-v(t)
			=\int_{-\varepsilon}^{\varepsilon}\eta_\varepsilon(\tau)
			\bigl(v(t-\tau)-v(t)\bigr)\,d\tau
		\end{align*}
		and therefore
		\begin{align*}
			\sup_{t\in[a,b]}\|v_\varepsilon(t)-v(t)\|_r
			\leq
			\sup_{\substack{t\in[a,b]\\|\tau|\leq\varepsilon}}
			\|v(t-\tau)-v(t)\|_r
			\longrightarrow0.
		\end{align*}
		The last convergence follows from uniform continuity on the larger
		compact interval. Applying the same argument to $\partial_tv$
		gives
		\begin{align}\label{conv:potential mollification detailed}
			v_\varepsilon&\longrightarrow v
			\quad\text{in }C([a,b];L^4\cap L^6),\nonumber\\
			\partial_tv_\varepsilon&\longrightarrow\partial_tv
			\quad\text{in }C([a,b];L^2).
		\end{align}
		
		The functional
		\begin{align*}
			\Phi(f)=\frac14\int_\Omega |f(x)|^4\,dx
		\end{align*}
		is continuously Fr\'echet differentiable on the real space
		$L^4(\Omega)$, and
		\begin{align*}
			\Phi'(f)[g]=\int_\Omega f^3g\,dx.
		\end{align*}
		Since $v_\varepsilon$ is differentiable with values in $L^4$,
		the Banach-space chain rule yields
		\begin{align*}
			\frac{d}{dt}\frac14\|v_\varepsilon(t)\|_4^4
			=\int_\Omega v_\varepsilon^3\partial_tv_\varepsilon\,dx.
		\end{align*}
		Integrating from $a$ to $b$, we obtain
		\begin{align}\label{id:potential mollified detailed}
			\frac14\|v_\varepsilon(b)\|_4^4
			-\frac14\|v_\varepsilon(a)\|_4^4
			=\int_a^b\int_\Omega
			v_\varepsilon^3\partial_tv_\varepsilon\,dx\,dt.
		\end{align}
		
		We now pass to the limit. By the algebraic identity for the
		difference of two cubes and H\"older's inequality,
		\begin{align*}
			\|v_\varepsilon^3-v^3\|_2
			\lesssim
			\bigl(\|v_\varepsilon\|_6^2+\|v\|_6^2\bigr)
			\|v_\varepsilon-v\|_6.
		\end{align*}
		The convergence in \eqref{conv:potential mollification detailed}
		therefore implies
		\begin{align*}
			v_\varepsilon^3\longrightarrow v^3
			\quad\text{in }C([a,b];L^2).
		\end{align*}
		Consequently,
		\begin{align*}
			&\left|
			\int_a^b\int_\Omega
			\left(v_\varepsilon^3\partial_tv_\varepsilon
			-v^3\partial_tv\right)\,dx\,dt
			\right|\\
			&\quad\leq
			(b-a)\|v_\varepsilon^3-v^3\|_{C_tL^2}
			\|\partial_tv_\varepsilon\|_{C_tL^2}\\
			&\qquad+
			(b-a)\|v^3\|_{C_tL^2}
			\|\partial_tv_\varepsilon-\partial_tv\|_{C_tL^2}
			\longrightarrow0.
		\end{align*}
		The endpoint terms converge by the $C_tL^4$ convergence. Passing to
		the limit in \eqref{id:potential mollified detailed} gives
		\begin{align}\label{id:potential integrated detailed}
			\frac14\|v(b)\|_4^4-\frac14\|v(a)\|_4^4
			=\int_a^b\int_\Omega v^3\partial_tv\,dx\,dt.
		\end{align}
		
		\medskip
		\noindent\textit{Conclusion.}
		Adding \eqref{id:quadratic integrated detailed} and
		\eqref{id:potential integrated detailed}, we find
		\begin{align*}
			E(v)(b)-E(v)(a)
			&=\int_a^b\int_\Omega (G+v^3)\partial_tv\,dx\,dt\\
			&=-3\int_a^b\int_\Omega\partial_tv\,v^2w\,dx\,dt\\
			&\quad-3\int_a^b\int_\Omega\partial_tv\,vw^2\,dx\,dt.
		\end{align*}
		Thus the cubic contribution $-v^3$ in $G$ cancels exactly with
		the derivative of the potential energy.
		
		Finally,
		\begin{align*}
			\left|\int_\Omega\partial_tv\,v^2w\,dx\right|
			&\leq\|\partial_tv\|_2\|v\|_6^2\|w\|_6,\\
			\left|\int_\Omega\partial_tv\,vw^2\,dx\right|
			&\leq\|\partial_tv\|_2\|v\|_6\|w\|_6^2.
		\end{align*}
		Both expressions are bounded on every compact time interval.
		The integrated identity therefore represents $E(v)$ as a constant
		plus the time integral of a locally integrable function.
		It follows that
		\begin{align*}
			E(v)\in W^{1,1}_{\mathrm{loc}}(I),
		\end{align*}
		and \eqref{id:low frequency energy revised} holds for almost every
		$t\in I$. No time derivative of $w$ is used.
		
		\noindent\textit{Step 4: A logarithmic energy estimate.}
		For brevity write $E=E(v)(t)$ at a fixed time.
		The energy and the Sobolev embedding give
		\begin{align}\label{est:low frequency energy norms revised}
			\|\partial_tv\|_2+\|v\|_6+\|v\|_\infty\lesssim E^{1/2},
			\qquad\|v\|_4\lesssim E^{1/4}.
		\end{align}
		The radial $L^\infty$ bound is specific to the exterior region $r\geq1$.
		For smooth compactly supported radial functions,
		\begin{align*}
			|v(r)|
			&\leq\int_r^\infty|\partial_\rho v(\rho)|\,d\rho\\
			&\leq\left(\int_r^\infty|\partial_\rho v(\rho)|^2\rho^2\,d\rho\right)^{1/2}
			\left(\int_r^\infty\rho^{-2}\,d\rho\right)^{1/2}
			\lesssim r^{-1/2}\|\nabla v\|_2\leq C\|\nabla v\|_2.
		\end{align*}
		The bound extends to the radial homogeneous energy space by completion.
		
		Proposition~\ref{prop:critical sqrt p} gives, uniformly for $4\leq p<\infty$,
		\begin{align}\label{est:low frequency w p revised}
			\|w(t)\|_p\leq C\sqrt p\,B.
		\end{align}
		Take any $p\geq6$ and set $q=4p/(p-2)$, so that
		$\frac12+\frac1p+\frac2q=1$.
		Interpolation between $L^4$ and $L^\infty$ yields
		\begin{align*}
			\|v\|_q^2
			\leq\|v\|_4^{2-4/p}\|v\|_\infty^{4/p}
			\lesssim E^{1/2+1/p},
		\end{align*}
		with a constant uniform for $p\geq6$.
		Hence
		\begin{align*}
			\left|\int_\Omega\partial_tv\,v^2w\,dx\right|
			&\leq\|\partial_tv\|_2\|w\|_p\|v\|_q^2
			\lesssim B\sqrt p\,E^{1+1/p},\\
			\left|\int_\Omega\partial_tv\,vw^2\,dx\right|
			&\leq\|\partial_tv\|_2\|v\|_6\|w\|_6^2
			\lesssim B^2E.
		\end{align*}
		Thus \eqref{id:low frequency energy revised} implies
		\begin{align}\label{est:low frequency energy p revised}
			|E'(t)|\leq C B\sqrt p\,E(t)^{1+1/p}+CB^2E(t),\qquad p\geq6.
		\end{align}
		
		Let $Y=3+E$ and choose, at each time,
		\begin{align*}
			p=\max\{6,\log Y\}.
		\end{align*}
		Then $Y^{1/p}\leq e$ and $\sqrt p\leq C\sqrt{\log Y}$.
		All constants in \eqref{est:low frequency energy p revised} are uniform in
		$p$, so this choice gives
		\begin{align*}
			|Y'(t)|\leq C(B+B^2)Y(t)\sqrt{\log Y(t)}.
		\end{align*}
		The chain rule for locally absolutely continuous functions therefore gives
		\begin{align*}
			\left|\frac{d}{dt}\sqrt{\log Y(t)}\right|
			\leq C(B+B^2).
		\end{align*}
		Integration from zero proves
		\eqref{est:low frequency logarithmic bound revised} on the maximal local
		existence interval. In particular,
		\begin{align}\label{est:low frequency explicit energy revised}
			E(v)(t)\leq
			\exp\!\left(\left[\sqrt{\log(3+E(v)(0))}
			+C(B+B^2)|t|\right]^2\right)-3.
		\end{align}
		The energy is therefore bounded on every finite time interval.
		
		\noindent\textit{Step 5: Global continuation of the affine energy solution.}
		Suppose, for contradiction, that its maximal forward endpoint $T_+$ is finite.
		By \eqref{est:low frequency explicit energy revised}, there is a finite
		constant $E_*$ with $E(v)(t)\leq E_*$ for $0\leq t<T_+$.
		Since $h(0)=0$ and $\partial_th=\partial_tv$, the fundamental theorem in
		$L^2$ gives
		\begin{align*}
			\|h(t)\|_2\leq\int_0^t\|\partial_tv(\tau)\|_2\,d\tau
			\leq T_+\sqrt{2E_*}.
		\end{align*}
		Also,
		\begin{align*}
			\|\nabla h(t)\|_2\leq\sqrt{2E_*}+\|\nabla v_0\|_2,
			\qquad\|\partial_th(t)\|_2\leq\sqrt{2E_*}.
		\end{align*}
		Thus, after increasing the finite constant if necessary,
		\begin{align*}
			\sup_{0\leq t<T_+}
			\bigl(\|h(t)\|_{H_D^1}+\|\partial_th(t)\|_2\bigr)
			\leq A_*,\qquad
			A_*=1+(T_++2)\sqrt{2E_*}+\|\nabla v_0\|_2.
		\end{align*}
		The background bound \eqref{est:low frequency background properties revised}
		provides $W_*:=\sup_{t\in\R}\|w(t)\|_6<\infty$.
		To make the continuation argument explicit, choose
		\begin{align*}
			R_*=2K(A_*+1),\qquad L_*=1+\|v_0\|_6+R_*+W_*,
		\end{align*}
		where $K$ is the fixed constant in the local energy construction, and take
		\begin{align*}
			\ell=\min\left\{1,
			\frac{R_*}{4K(1+\|\Delta_\Omega v_0\|_2+L_*^3)},
			\frac1{4KL_*^2}\right\}>0.
		\end{align*}
		At any time $t_0<T_+$, the free restarted increment is bounded by
		$K(\|h(t_0)\|_{H_D^1}+\|\partial_th(t_0)\|_2)\leq KA_*<R_*/2$.
		The same Picard estimates as in Step 2 then construct a solution on
		$[t_0,t_0+\ell]$ in the ball of radius $R_*$.
		All quantities defining $\ell$ are independent of $t_0$.
		Choose $t_0$ so that $T_+-t_0<\ell/2$.
		The restarted solution extends beyond $T_+$ and agrees with the original
		one on their overlap by local uniqueness, contradicting maximality.
		The backward endpoint is handled in the same way.
		This proves global existence and uniqueness in the affine energy class.
		Any energy mild solution in the stated class has
		$v(t)-v_0=\int_0^t\partial_tv(\tau)\,d\tau$ in $L^2$ locally and hence
		belongs to this affine class, so uniqueness holds in
		\eqref{reg:low frequency energy revised} as well.
		
		\noindent\textit{Step 6: Recovery of the $s$-level regularity.}
		Let
		\begin{align*}
			\mathcal Q=v^3+3v^2w+3vw^2.
		\end{align*}
		Since $1-s\in[\frac14,\frac12]$, Proposition~\ref{prop:radial Sob embedding}
		and duality imply
		\begin{align*}
			\|F\|_{\dot H_D^{s-1}}\lesssim_s\|F\|_{L^{4/3}}
		\end{align*}
		for radial $F$.
		On $[-T,T]$, set $E_T=\sup_{|t|\leq T}E(v)(t)<\infty$.
		H\"older's inequality gives
		\begin{align*}
			\|\mathcal Q(t)\|_{\dot H_D^{s-1}}
			&\lesssim_s\|v(t)\|_4^3
			+\|v(t)\|_4^2\|w(t)\|_4+\|v(t)\|_4\|w(t)\|_4^2\\
			&\lesssim_s(E_T^{1/4}+B)^3.
		\end{align*}
		The source is continuous in this space.
		Moreover, $\mathcal Q\in C_tL^2$ by the local energy construction.
		The energy Duhamel formula is
		\begin{align}\label{eq:low frequency mild revised}
			v(t)={}&\cos(t\sqrt{-\Delta_\Omega})v_0
			+\frac{\sin(t\sqrt{-\Delta_\Omega})}{\sqrt{-\Delta_\Omega}}v_1
			\nonumber\\
			&-\int_0^t\frac{\sin((t-\tau)\sqrt{-\Delta_\Omega})}
			{\sqrt{-\Delta_\Omega}}\mathcal Q(\tau)\,d\tau.
		\end{align}
		This formula can also be recovered directly from the increment formula.
		The constant forcing combines with the initial profile according to
		\begin{align*}
			v_0+\int_0^t
			\frac{\sin((t-\tau)\sqrt{-\Delta_\Omega})}{\sqrt{-\Delta_\Omega}}
			\Delta_\Omega v_0\,d\tau
			=\cos(t\sqrt{-\Delta_\Omega})v_0.
		\end{align*}
		On a compact spectral annulus this is the scalar identity
		\begin{align*}
			1-\lambda\int_0^t\sin((t-\tau)\lambda)\,d\tau
			=\cos(t\lambda).
		\end{align*}
		Removal of the cutoffs gives the identity in the homogeneous energy
		space, without assuming $v_0\in L^2$.
		The initial pair belongs to $\dot H_D^s\times\dot H_D^{s-1}$ by
		the spectral cutoff bounds.
		Applying the $s$-level spectral multiplier estimates to
		\eqref{eq:low frequency mild revised} and its derivative gives
		\begin{align}\label{est:low frequency s revised}
			&\sup_{|t|\leq T}\bigl(\|v(t)\|_{\dot H_D^s}
			+\|\partial_tv(t)\|_{\dot H_D^{s-1}}\bigr)
			\nonumber\\
			&\qquad\lesssim_s\|v_0\|_{\dot H_D^s}
			+\|v_1\|_{\dot H_D^{s-1}}
			+T(E_T^{1/4}+B)^3.
		\end{align}
		The energy and $s$-level formulas agree by restriction to compact spectral
		annuli and removal of the cutoffs. Their continuity proves
		\eqref{reg:low frequency s revised}.
	\end{proof}
	
	\begin{corollary}[Recombination of the two components]
		\label{cor:full global recombination revised}
		For radial data in $\dot H_D^s\times\dot H_D^{s-1}$ with
		$\frac12\leq s\leq\frac34$, the function $u=v+w$ constructed above
		is the unique global mild solution of
		\begin{align*}
			\partial_t^2u-\Delta_\Omega u+u^3=0,
			\qquad (u,\partial_tu)|_{t=0}=(u_0,u_1),
		\end{align*}
		with $(u,\partial_tu)\in C(\R;\dot H_D^s\times\dot H_D^{s-1})$.
		It depends locally Lipschitz continuously on the original initial pair
		on each finite time interval.
	\end{corollary}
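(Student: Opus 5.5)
We plan to prove Corollary~\ref{cor:full global recombination revised} in three steps: existence by adding the two components, uniqueness from the local theory, and Lipschitz dependence by following the data through both constructions. Throughout, $J$ is fixed by Proposition~\ref{prop:high frequency global revised} so that $\delta_J\leq\epsilon_0/2$. Let $w$ be the global high-frequency solution and $v$ the global remainder from Proposition~\ref{prop:low frequency global revised}.

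\textbf{Existence.} By \eqref{reg:high frequency global revised} and \eqref{reg:low frequency s revised}, $u=v+w$ satisfies $(u,\partial_tu)\in C(\R;\dot H_D^s\times\dot H_D^{s-1})$. At $t=0$ we have $(u,\partial_tu)=(P^\Omega_{\leq2^J}u_0+P^\Omega_{>2^J}u_0,\dots)=(u_0,u_1)$, since the two multipliers sum to one. Next we add the Duhamel formula \eqref{eq:high frequency mild revised} to \eqref{eq:low frequency mild revised}. The free parts combine by linearity. The sources combine because
\begin{align*}
w^3+v^3+3v^2w+3vw^2=(v+w)^3=u^3.
\end{align*}
Both source integrals converge as Banach-space integrals in $\dot H_D^s$, because $w^3$ and $\mathcal Q$ lie in $C_t\dot H_D^{s-1}$ by the $L^{4/3}$ duality bound \eqref{est:high frequency source direct}. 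This gives \eqref{eq:main global mild revised}. The bound $u\in C(\R;L^4)$ follows from \eqref{est:radial L4 embedding}.

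\textbf{Uniqueness.} Uniqueness in the continuous mild class is exactly Step~6 of Proposition~\ref{prop:general s local Picard}, applied on an arbitrary $[-T,T]$. On that interval any two such solutions are bounded in $C_t\dot H_D^s$, and the cubic difference estimate \eqref{est:general s local cubic difference} holds with no smallness assumption. We partition $[-T,T]$ into short intervals, absorb the difference term on each, and restart. We emphasize that uniqueness is not derived from the decomposition: the decomposition depends on $J$, while the mild solution is intrinsic.

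\textbf{Lipschitz dependence.} This is the main obstacle. The remainder was constructed only qualitatively, so a separate stability estimate for $v$ would require redoing the energy argument for differences. We instead argue with a priori bounds and the local theory. Fix data $(u_0,u_1)$, a time $T>0$, and a small neighborhood $U$ on which $\widetilde\delta_J\leq\epsilon_0$ with the same $J$; this is possible by Step~5 of Proposition~\ref{prop:high frequency global revised}. On $U$ we obtain uniform bounds as follows:
\begin{itemize}
\item $E(\widetilde v)(0)$ is uniformly bounded by \eqref{est:low frequency initial energy revised};
\item $\widetilde B\lesssim\epsilon_0$ by \eqref{est:high frequency critical revised};
\item hence \eqref{est:low frequency explicit energy revised} bounds $E(\widetilde v)$ uniformly on $[-T,T]$;
\item then \eqref{est:low frequency s revised} and \eqref{est:high frequency s revised} give a uniform bound $\sup_{|t|\leq T}\|\widetilde u(t)\|_{\dot H_D^s}+\|\partial_t\widetilde u(t)\|_{\dot H_D^{s-1}}\leq B_T$ for all data in $U$.
\end{itemize}
With this common bound $B_T$, we repeat the partition argument from Step~5 of Proposition~\ref{prop:high frequency global revised}. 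The time steps $h_T\sim B_T^{-2}$ come from the cubic difference estimate \eqref{est:general s local cubic difference}, which needs no smallness. On each step the difference grows by at most a factor of four. After $N(T,B_T)$ steps this yields
\begin{align*}
\|u-\widetilde u\|_{C([-T,T];\dot H_D^s)}+\|\partial_tu-\partial_t\widetilde u\|_{C([-T,T];\dot H_D^{s-1})}\leq 4^{N+1}\bigl(\|u_0-\widetilde u_0\|_{\dot H_D^s}+\|u_1-\widetilde u_1\|_{\dot H_D^{s-1}}\bigr),
\end{align*}
which is the local Lipschitz dependence on $U$. The point to check carefully is that the energy bound for $\widetilde v$ is genuinely uniform over $U$. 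The logarithmic bound \eqref{est:low frequency logarithmic bound revised} depends only on $E(\widetilde v)(0)$ and $\widetilde B$, both of which are controlled once $J$ is fixed, so this should go through.
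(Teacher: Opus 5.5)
Your proposal is correct and follows essentially the same route as the paper. You obtain existence by summing the two $s$-level Duhamel formulas (using $w^3+\mathcal Q=u^3$) and uniqueness from the local theory on successive short intervals; for local Lipschitz dependence you fix $J$ on a neighborhood of the data, take a common finite-time $s$-level bound from the uniform initial energy, the logarithmic energy estimate and the $s$-level recovery bounds, and then iterate the local Lipschitz estimate on a common partition of $[-T,T]$.
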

	
	\begin{proof}
		Adding the equations for $v$ and $w$ gives the cubic equation for $u$,
		and the initial projections sum to the identity.
		The two $s$-level Duhamel formulas therefore sum to the full mild formula.
		Uniqueness follows from Proposition~\ref{prop:general s local Picard}
		on successive local intervals.
		
		To verify continuous dependence, fix the original initial pair and a finite
		$T>0$. Choose $J$ so that its high-frequency critical norm is at most
		half the smallness threshold.
		The bounded spectral map from the original $s$-level data to the truncated
		critical pair allows this same $J$ for all sufficiently close original data.
		Their high-frequency solutions have a common bound for $B$ and common
		finite-time $s$-level bounds by
		Proposition~\ref{prop:high frequency global revised}.
		Their low initial energies have a common bound by
		\eqref{est:low frequency initial energy revised}, and hence the energy bound
		\eqref{est:low frequency explicit energy revised} is uniform on $[-T,T]$.
		Together with \eqref{est:low frequency s revised}, this supplies a common
		finite-time bound on the full $s$-level solution pairs.
		The local Lipschitz estimate in Proposition~\ref{prop:general s local Picard}
		can now be iterated on a common finite partition of $[-T,T]$.
		This proves the asserted local Lipschitz dependence.
		The extra initial condition $u_0\in L^4$ is unnecessary in this range
		by Proposition~\ref{prop:radial Sob embedding}.
	\end{proof}

	\def\cprime{$'$}
	
\end{document}